\documentclass[11pt]{amsart}

\usepackage{amssymb,amsmath,mathrsfs}
\usepackage{graphicx}
\usepackage{tikz}
\tikzset{singZ/.style={draw=black,fill=red!75!black,rectangle,inner sep=0pt,minimum size=5pt},
  singH/.style={draw=black,fill=red!75!black,circle,inner sep=0pt,minimum size=5.5pt},
  hole/.style={draw=blue!70!black,fill=white,circle,thick,inner sep=0pt,minimum size=5.5pt}}

\usepackage[margin=2.6cm]{geometry}

\theoremstyle{thmstyleone}
\newtheorem{theorem}{Theorem}[section]
\newtheorem{proposition}[theorem]{Proposition}
\newtheorem{lemma}[theorem]{Lemma}
\newtheorem{corollary}[theorem]{Corollary}
\newtheorem{maintheorem}{Theorem}

\theoremstyle{definition}

\newtheorem{question}[theorem]{Question}

\theoremstyle{remark}
\newtheorem{remark}[theorem]{Remark}

\numberwithin{equation}{section}
\allowdisplaybreaks

\newcommand{\R}[1]{\mathbb{R}^{#1}}
\newcommand{\Z}{\mathbb{Z}}
\newcommand{\Ham}{\mathbb{H}}
\newcommand{\C}{\mathbb{C}}
\newcommand{\M}{\mathscr{M}}
\newcommand{\Mreg}{\mathscr{M}_{\mathrm{reg}}}
\newcommand{\Mo}{\mathscr{M}_{0}}
\newcommand{\Sing}{\operatorname{Sing}\mathscr{M}}
\newcommand{\Iso}{\operatorname{Iso}(\mathscr{M})}
\newcommand{\Oct}{W_{\mathcal{F}}(B_4)}
\newcommand{\F}{\mathcal{F}}

\newcommand{\scal}[2]{\langle #1, #2\rangle}
\newcommand{\Id}{\mathrm{Id}}
\DeclareMathOperator{\sinl}{sl}
\DeclareMathOperator{\cosl}{cl}
\DeclareMathOperator{\sn}{sn}
\DeclareMathOperator{\dn}{dn}
\DeclareMathOperator{\Aut}{Aut}

\DeclareMathOperator{\Cone}{Cone}

\title[A periodic minimal hypersurface with Clifford cone singularities]{A periodic minimal hypersurface in Euclidean 4-space\\ with Clifford cone singularities}

\author{Vladimir G. Tkachev}
\address{Department of Mathematics, Link\"oping University, Sweden}
\email{vladimir.tkatjev@liu.se}

\begin{document}

\subjclass[2020]{Primary 53A10; Secondary 49Q05, 53C42, 33E05, 11H06}
\keywords{Minimal hypersurfaces, isolated singularities, Clifford cone, stationary varifolds, Morse index, lemniscatic functions, $D_4$ lattice, $24$-cell, theta series}

\begin{abstract}
Let $\sigma(t)=\sinl(\varpi t)$ be the lemniscatic sine, normalized to have period $2$. We study the zero set $\M$ of
$S(x)=\sigma(x_1)\sigma(x_3)-\sigma(x_2)\sigma(x_4)$
in $\R{4}$. We show that $\M$ is a complete embedded minimal hypersurface with infinitely many isolated singularities, which are exactly the vertices of the regular honeycomb of $\R{4}$ by $24$-cells. Near every singular point $\M$ is an analytic image of the Clifford cone, and $\M$ is a stationary integral varifold with density $\pi/2$ at each singularity. The isometry group of $\M$ is generated by the translations of the root lattice $D_4$ and a hyperoctahedral group of order $384$ fixing the centre of a $24$-cell; it has index three in the symmetry group of the honeycomb, acts transitively on the singular points, and contains a symmetry which is not visible from the defining equation. Seen from the centre of a $24$-cell, $\M$ is the zero set of a theta series of a translate of the Hurwitz quaternions, which makes the whole symmetry group manifest; the sections of $\M$ by a family of parallel hyperplanes are triply periodic surfaces with cubic symmetry which change topology through conical points. Finally, $\M$ is obtained from a union of $2$-planes by attaching cones over $2$-tori, and its quotient by the lattice of periods is a compact minimal hypersurface in a flat $4$-torus with exactly three singular points and infinite Morse index; we show that its regular part is one-sided and compute its Euler characteristic and integral homology.
\end{abstract}

\maketitle

\setcounter{tocdepth}{1}
\tableofcontents

\section{Introduction}\label{sec:intro}

Explicit complete minimal hypersurfaces in a Euclidean space $\R{n}$, $n\ge4$, are rare: apart from hyperplanes, catenoids and minimal cones \cite{Hsiang67,Simons,BGG,NTVbook,NTV12}, only a few families are known in closed form \cite{ChoeHoppe,LeeLee,HoppeTkachev}. Many other natural families, for instance higher-dimensional analogues of the Riemann, Scherk and Schwarz surfaces, are known to exist but are not given by explicit formulas \cite{KaabachiP,Pacard02,ChoeHoppe18}.

The situation is similar for minimal hypersurfaces with isolated singularities. The simplest examples are minimal cones, for instance the \emph{Clifford cone}
\begin{equation}\label{Clifford}
C=\{x\in\R{4}:\ x_1^2+x_2^2=x_3^2+x_4^2\},
\end{equation}
which is the cone over the Clifford torus $S^1(\frac{1}{\sqrt2})\times S^1(\frac{1}{\sqrt2})\subset S^3$ \cite{Lawson}. Caffarelli, Hardt and Simon \cite{CafHarSi} showed that every regular minimal cone is the tangent cone of many minimal hypersurfaces, defined in a ball, which are not cones and have an isolated singularity at the origin. Hardt and Simon \cite{HardtSimon} showed that an area minimizing cone with an isolated singularity is approximated on each side by smooth area minimizing hypersurfaces. Smale \cite{SmaleN} constructed stable embedded minimal hypersurfaces with boundary in $\R{n+1}$, $n\ge7$, with an arbitrary finite number of isolated singularities, by gluing stable cones along thin bridges; this was recently extended to higher codimension by Dimler \cite{Dimler}. Mazet \cite{Mazet} studied minimal hypersurfaces asymptotic to the Simons cones, and Simon \cite{SimonSing} constructed stable minimal hypersurfaces with prescribed (and quite arbitrary) singular sets. All these results are existence theorems, obtained by variational or perturbative methods.

In the introduction to \cite{SmaleN}, Smale wrote: ``An important question is what kinds of singular sets actually occur. In particular, in the case of isolated singularities, not much is known.'' His question concerns stable and area minimizing hypersurfaces, for which isolated singularities can only occur in dimensions $n\ge7$; the examples mentioned above have finitely many singular points and are modelled on stable cones. The present paper addresses the analogous question in the non-stable setting.

The purpose of this paper is to describe in detail a completely explicit example of a different nature: a periodic minimal hypersurface in $\R{4}$ with infinitely many isolated singularities, all modelled on the (unstable) Clifford cone, whose quotient is a compact minimal hypersurface without boundary in a flat $4$-torus with exactly three singular points. Let $\sinl t$ be the lemniscatic sine, that is, the solution of
$$
\sinl'(t)^2=1-\sinl^4(t),\qquad \sinl(0)=0,\ \sinl'(0)=1,
$$
and let $\varpi=\Gamma(\frac14)^2/(2\sqrt{2\pi})=2.62205\ldots$ be the lemniscate constant, so that $\sinl$ has period $2\varpi$ (see Section~\ref{sec:lemn}). We put
\begin{equation}\label{sigma}
\sigma(t):=\sinl(\varpi t),
\end{equation}
so that $\sigma$ has period $2$, vanishes exactly on $\Z$, and $\sigma(t+1)=-\sigma(t)$. Our main object is
\begin{equation}\label{Mdef}
\M=\{x\in\R{4}:\ S(x)=0\},\qquad S(x)=\sigma(x_1)\sigma(x_3)-\sigma(x_2)\sigma(x_4).
\end{equation}
Since $\sigma(t)\approx \varpi t$ for small $t$, near the origin $\M$ looks like the cone $x_1x_3=x_2x_4$, which is congruent to \eqref{Clifford}.

What makes this example interesting is not only that it is explicit, but that a single elementary equation carries several rich structures at once. Geometrically, $\M$ is a stationary integral varifold whose singular set is an infinite lattice-like set of Clifford-cone singularities (Theorem~\ref{thA}). Its symmetry group is governed by the root lattice $D_4$ and the regular honeycomb of $\R{4}$ by $24$-cells, and contains a symmetry which is not visible from the defining equation (Theorem~\ref{thB}). Seen from the centre of a $24$-cell, $\M$ becomes the zero set of a theta series of a translate of the Hurwitz order of integral quaternions, which makes the whole symmetry group manifest (Theorem~\ref{thD}). Finally, the topology of $\M$ and of its compact quotient in a flat torus can be described completely (Theorem~\ref{thC}). In this paper we develop these four aspects in turn.

Our first result says that $\M$ is a minimal hypersurface, and that the local picture near the origin repeats itself at every point of an infinite discrete set.

Throughout the paper, $e_1,\dots,e_4$ is the standard basis of $\R{4}$,
$$
D_4=\{m\in\Z^4:\ m_1+m_2+m_3+m_4\ \text{is even}\}
$$
is the checkerboard lattice, and
$$
h=\tfrac12(1,1,1,1),\qquad h'=\tfrac12(1,1,1,-1).
$$

\begin{maintheorem}\label{thA}
\begin{enumerate}
\item[(a)] $\M$ is a closed real analytic subset of $\R{4}$, invariant under the translations by vectors of $D_4$. The set of critical points of $S$ on $\M$ is
$$
\Sing=\Z^4\sqcup (h+D_4).
$$
\item[(b)] $\Mreg=\M\setminus \Sing$ is a properly embedded real analytic minimal hypersurface in the open set $\R{4}\setminus\Sing$.
\item[(c)] For every $y\in \Sing$ there exist a neighbourhood $U$ of $0$ and a real analytic diffeomorphism $\Phi$ of $U$ onto a neighbourhood of $y$ with $\Phi(0)=y$ and $d\Phi(0)=\Id$, such that
$$
\M\cap\Phi(U)=\Phi(C_y\cap U),
$$
where $C_y$ is the zero set of the quadratic part of $S$ at $y$; explicitly,
\begin{align*}
&&C_y&=\{x_1x_3=\epsilon x_2x_4\}, &&\text{with $\epsilon=(-1)^{y_1+y_2+y_3+y_4}$ if $y\in\Z^4$,} \\
&&C_y&=\{x_1^2+x_3^2=x_2^2+x_4^2\}, &&\text{if $y\in h+D_4.$}
\end{align*}
Each $C_y$ is congruent to the Clifford cone \eqref{Clifford}; in particular, $\M$ is not a topological manifold near $y$.
\item[(d)] $\M$, with multiplicity one, is a stationary integral varifold in $\R{4}$. At every $y\in\Sing$ its density equals $\pi/2$ and its unique tangent cone is $C_y$.
\end{enumerate}
\end{maintheorem}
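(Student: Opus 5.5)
The plan is to treat the four parts in order. The analytic input throughout is the pair of identities coming from the lemniscatic ODE,
$$\sigma'(t)^2=\varpi^2\bigl(1-\sigma(t)^4\bigr),\qquad \sigma''(t)=-2\varpi^2\sigma(t)^3.$$
Write $s_i=\sigma(x_i)$ and $d_i=\sigma'(x_i)$.

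\textbf{Part (a).} $\M$ is the zero set of a real analytic function, so it is a closed analytic set. For $D_4$-invariance, note that a translation by $e_i\pm e_j$ changes the sign of two of the factors $s_k$, because $\sigma(t+1)=-\sigma(t)$. Hence $S\mapsto \pm S$, and the zero set is preserved.

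The gradient of $S$ is
$$\nabla S=(d_1s_3,\,-d_2s_4,\,s_1d_3,\,-s_2d_4).$$
By the ODE, $s_i$ and $d_i$ never vanish simultaneously; $s_i=0$ exactly on $\Z$, and $d_i=0$ exactly on $\tfrac12+\Z$. Setting $\nabla S=0$ therefore forces each of the pairs $(x_1,x_3)$ and $(x_2,x_4)$ to be either both integers or both half-odd integers. Now impose $S=0$.
\begin{itemize}
\item If one pair is half-odd, then its product is $\pm1$. This forces the other pair to be half-odd as well, with the sign condition $\sigma(x_1)\sigma(x_3)=\sigma(x_2)\sigma(x_4)$.
\item Since $\sigma(k+\tfrac12)=(-1)^k$, this sign condition is exactly the parity condition defining $h+D_4$.
\item If both pairs are integers, every point of $\Z^4$ qualifies.
\end{itemize}
This gives $\Sing=\Z^4\sqcup(h+D_4)$.

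\textbf{Part (b).} On $\Mreg$ we have $\nabla S\neq0$, so $\Mreg$ is a properly embedded analytic hypersurface in the open set $\R{4}\setminus\Sing$. Minimality is equivalent to the vanishing on $\{S=0\}$ of the mean curvature numerator
$$\mathcal H(S)=|\nabla S|^2\Delta S-\langle \mathrm{Hess}\,S\,\nabla S,\nabla S\rangle .$$
The second derivatives are
$$S_{11}=-2\varpi^2s_1^3s_3,\qquad S_{13}=d_1d_3,\qquad S_{22}=2\varpi^2s_2^3s_4,\qquad S_{24}=-d_2d_4,$$
together with the analogous formulas for the remaining entries; the other mixed derivatives vanish. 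I would substitute these into $\mathcal H(S)$. After also using $d_i^2=\varpi^2(1-s_i^4)$, only even powers of the $d_i$ survive, so $\mathcal H(S)$ becomes a polynomial in $s_1,\dots,s_4$. The claim to check is that this polynomial lies in the ideal generated by $s_1s_3-s_2s_4$. Equivalently, after substituting $s_1s_3=s_2s_4=p$ it should vanish identically; for instance, $\Delta S$ becomes $-2\varpi^2p(s_1^2+s_3^2-s_2^2-s_4^2)$. This algebraic identity is the heart of the matter and is the step I expect to be the main computational obstacle. I would organise it by grouping the terms according to the two pairs $(1,3)$ and $(2,4)$, and exploit the symmetry of $S$ under exchanging these pairs.

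\textbf{Part (c).} I would compute the Hessian of $S$ at each singular point.
\begin{itemize}
\item At $y\in\Z^4$ we have $d_i=\varpi(-1)^{y_i}$ and all $s_i=0$. The quadratic part is therefore $\varpi^2\bigl((-1)^{y_1+y_3}x_1x_3-(-1)^{y_2+y_4}x_2x_4\bigr)$, whose zero set is $C_y$ with the stated $\epsilon$.
\item At $y\in h+D_4$ all $d_i=0$ and $s_i^2=1$, so the mixed terms vanish. With $p=s_1s_3=s_2s_4$ the quadratic part is $-\varpi^2p\,(x_1^2+x_3^2-x_2^2-x_4^2)$.
\end{itemize}
Both quadratic forms are nondegenerate of signature $(2,2)$. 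The analytic Morse lemma then gives an analytic $\Phi$ with $S\circ\Phi=Q_y$ (up to a constant factor) and $d\Phi(0)=\Id$, which yields $\M\cap\Phi(U)=\Phi(C_y\cap U)$. Each $C_y$ is congruent to the Clifford cone \eqref{Clifford} after a linear change such as $x_1=u+v$, $x_3=u-v$. Finally, $\M$ is not a topological manifold near $y$ because the link of the cone is a torus rather than a $2$-sphere.

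\textbf{Part (d).} The singular set is discrete, and by (c) the area of $\M\cap B_r(y)$ is $O(r^3)$. I would test the first variation against a vector field $\varphi_\varepsilon X$, where $\varphi_\varepsilon$ cuts off near $y$. The error term is then bounded by $C\varepsilon^{-1}\cdot\varepsilon^3\to0$, so stationarity of $\Mreg$ extends across the singular points.

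For the tangent cone, note that because $d\Phi(0)=\Id$, the blow-ups $\lambda^{-1}(\M-y)$ converge with multiplicity one to $C_y$; hence $C_y$ is the unique tangent cone. The density is then the density of the Clifford cone:
$$\frac{|C\cap B_1|}{|B^3|}=\frac{2\pi^2/3}{4\pi/3}=\frac{\pi}{2}.$$
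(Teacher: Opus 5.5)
Your outline follows the paper's proof closely. In (a) you find the critical points directly from $\nabla S$, using that $\sigma$ and $\sigma'$ never vanish together. The paper instead uses the formula $|DS_0|^2=(1-a^2)(p_1+p_2)$ on $\M$; both routes work. Your Hessians in (c) agree with Lemma~\ref{lem:expansions}. Any analytic Morse lemma yields the normalization $d\Phi(0)=\Id$ after precomposing with $d\Psi(0)^{-1}$. Part (d) is the paper's cutoff and blow-up argument. One small point: for the non-manifold claim, ``the link is a torus'' should be made precise, for instance via $H_2(C_y,C_y\setminus\{0\})\cong\tilde H_1(T^2)\neq0$.

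The one real gap is in (b). You correctly reduce minimality to the vanishing of $|\nabla S|^2\Delta S-\scal{D^2S\,\nabla S}{\nabla S}=\Delta_1S$ on $\{S=0\}$, which is exactly the paper's route. But you then only state this as ``the claim to check''. Since this identity is the entire content of minimality, (b) is not proved as written.

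The symmetry you suggest exploiting does not suffice. The swap $(x_1,x_3)\leftrightarrow(x_2,x_4)$ turns $S$ into $-S$, so it forces $\Delta_1S=0$ only on $2$-dimensional fixed sets such as $\{x_1=x_2,\ x_3=x_4\}$, not on the $3$-dimensional $\M$.

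What makes the computation work (Lemma~\ref{lem:key}) is to pass to $S_0(y)=S(y/\varpi)$ and use the pair variables $a=s_1s_3$, $p_1=s_1^2+s_3^2$, $b=s_2s_4$, $p_2=s_2^2+s_4^2$. One has $|\nabla a|^2=(1-a^2)p_1$, $\Delta a=-2ap_1$ and $\scal{\nabla a}{\nabla p_1}=2a(2-p_1^2+2a^2)$, the same formulas hold for $(b,p_2)$, and the gradients of the two pairs are orthogonal. After substitution, the terms in $p_1^2$ and $p_2^2$ cancel, and
\[
\tfrac12\Delta_1S_0=(b-a)\bigl(1-(a^4+a^3b+a^2b^2+ab^3+b^4)+p_1p_2(1+ab)\bigr).
\]
With this identity supplied, your proof is complete and coincides with the paper's.
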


Thus $\M$ is a complete embedded minimal hypersurface in $\R{4}$ with an infinite discrete singular set, each singularity being modelled on the Clifford cone. We are not aware of another explicit complete embedded minimal hypersurface in a Euclidean space with infinitely many isolated singularities. We emphasize that the Clifford cone in $\R{4}$ is not area minimizing, and is not even stable; accordingly, $\M$ is not stable either. In fact, every neighbourhood of a singular point carries infinitely many independent destabilizing variations, so that the compact quotient $\M/D_4$ described below has infinite Morse index (Theorem~\ref{thm:index}). The quotient $\M/D_4$ is a compact minimal hypersurface in the flat torus $\R{4}/D_4$ with exactly three isolated singular points (Theorem~\ref{thC}); this may be compared with the finitely many singularities in the examples of Smale \cite{SmaleN}.

Our second result describes the symmetries of $\M$. The singular set $\Sing$ consists of points of two different kinds: the points of $\Z^4$, where the tangent cone is $x_1x_3=\pm x_2x_4$, and the points of $h+D_4$, where it is $x_1^2+x_3^2=x_2^2+x_4^2$. It is natural to ask whether there is a symmetry of $\M$ that interchanges them. The answer is yes, but the corresponding isometry is not visible from the definition of $\M$; it comes from the addition theorem for the lemniscatic sine. To state the result, consider the orthogonal frame
\begin{equation}\label{frame}
\F=\{\pm f_1,\pm f_2,\pm f_3,\pm f_4\},
\end{equation}
where
\begin{equation}\label{frameff}
\qquad f_1=e_1+e_3,\ \ f_2=e_1-e_3,\ \ f_3=e_2+e_4,\ \ f_4=e_2-e_4,
\end{equation}
and let $\Oct$ denote its hyperoctahedral group, i.e. the group of orthogonal transformations of $\R{4}$ which permute the eight vectors of $\F$. In Coxeter--Weyl notation, $\Oct$ is the Weyl group of type $B_4$ (equivalently, $C_4$) for which $\pm f_1,\dots,\pm f_4$ are the short roots; it has order $2^4\cdot 4!=384$. Let $\Iso$ denote the group of all isometries $g$ of $\R{4}$ such that $g(\M)=\M$.

\begin{maintheorem}\label{thB}
\begin{enumerate}
\item[(a)] A translation $x\mapsto x+v$ preserves $\M$ if and only if $v\in D_4$.
\item[(b)] The isometry group of $\M$ is
$$
\Iso=\{x\mapsto h'+L(x-h')+n:\ \ L\in\Oct,\ n\in D_4\}\cong D_4\rtimes \Oct .
$$
In particular, $\Iso/D_4$ is isomorphic to $W(B_4)$ and has order $384$.
\item[(c)] $\Iso$ acts transitively on $\Sing$. The stabilizer of the origin has order $128$; it consists of the $L\in\Oct$ which preserve the partition $\F=\{\pm f_1,\pm f_4\}\sqcup\{\pm f_2,\pm f_3\}$, and it contains the orthogonal involution
\begin{equation}\label{Amatrix}
A=\frac12\begin{pmatrix}
1&1&1&1\\
1&1&-1&-1\\
1&-1&1&-1\\
1&-1&-1&1
\end{pmatrix},
\end{equation}
which maps $h$ to $e_1$. More precisely, $S(x)=Q(x)\,S(Ax)$ for all $x\in\R{4}$, where $Q$ is a real analytic function with $\frac14\le Q\le 4$.
\item[(d)] $\Sing$ is the vertex set of the regular honeycomb of $\R{4}$ by $24$-cells whose cells are the Voronoi cells of the points of $h'+D_4$, and $\Iso$ is a subgroup of index three in the symmetry group of this honeycomb.
\end{enumerate}
\end{maintheorem}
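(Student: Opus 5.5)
Everything rests on the algebraic identities satisfied by $\sigma$. From $\sigma(t+1)=-\sigma(t)$ and oddness we get $\sigma(1-t)=\sigma(t)$. The lemniscatic addition theorem
\[
\sigma(a+b)=\frac{\sigma(a)\sigma'(b)+\sigma(b)\sigma'(a)}{\varpi\bigl(1+\sigma(a)^2\sigma(b)^2\bigr)}
\]
gives the quarter-period identity $\sigma(t+\tfrac12)=\varpi^{-1}\sigma'(t)/\bigl(\sqrt{\cdot}\bigr)$. Equivalently, we have
\[
\sigma(t+\tfrac12)^2=\frac{1-\sigma(t)^2}{1+\sigma(t)^2},
\]
which is the classical relation $\sinl(u+\varpi/2)^2=(1-\sinl^2u)/(1+\sinl^2u)$.

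\textbf{Part (a).} For $v\in D_4$, each $x_i\mapsto x_i+v_i$ multiplies $\sigma(x_i)$ by $(-1)^{v_i}$. Hence $S$ changes by the signs $(-1)^{v_1+v_3}$ and $(-1)^{v_2+v_4}$ on its two monomials. These signs agree because $v_1+v_2+v_3+v_4$ is even, so $S(x+v)=\pm S(x)$.

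For the converse, a translation preserving $\M$ must preserve the set of critical points $\Sing=\Z^4\sqcup(h+D_4)$ from Theorem~\ref{thA}(a). It must also preserve the tangent-cone types from Theorem~\ref{thA}(c), at least up to the orthogonal part, which here is trivial. A translation maps a tangent cone to the identical cone, so $v$ must carry $0$ to a point whose cone is $\{x_1x_3=x_2x_4\}$. That point is some $y\in\Z^4$ with $\epsilon=+1$, i.e.\ $v\in D_4$. The cone at $h+D_4$ is different, which rules out $v\in h+D_4$, and the points of $\Z^4$ with odd coordinate sum give $\epsilon=-1$.

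\textbf{Part (c), the identity $S=Q\,S\circ A$.} I would do this step first, since it produces the hidden symmetry. Write $Ax=(u_1,\dots,u_4)$, so that $u_1=\tfrac12(x_1+x_2+x_3+x_4)$, and so on. The pairs $(u_1,u_4)$ and $(u_2,u_3)$ are half-sums and half-differences of $x_1+x_4$, $x_2+x_3$ (respectively of $x_1-x_4$, $x_2-x_3$), up to arrangement. So I would express $S$ via product formulas of the type
\[
\sigma(a+b)\sigma(a-b)=\frac{\sigma(a)^2-\sigma(b)^2}{1+\sigma(a)^2\sigma(b)^2},
\]
which follow from the addition theorem. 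Applying it with $a,b$ the half-sums, $\sigma(x_1)\sigma(x_3)$ and $\sigma(x_2)\sigma(x_4)$ become rational expressions in $\sigma(u_i)^2$ and $\sigma'(u_i)$. Their difference should factor as $S(Ax)$ times a quotient of factors $1+\sigma^2\sigma^2$. The bound $\tfrac14\le Q\le4$ then comes from $0\le\sigma^2\le1$, which gives $1\le 1+\sigma^2\sigma^2\le2$; the bound is squared because two such factors occur. Since $A^2=\Id$, the same identity read backwards shows $A(\M)=\M$. I expect this algebraic verification to be the main obstacle: choosing the right pairing so that the factorization is exact. I would first check it on the diagonal and near the origin to fix the signs.

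\textbf{Part (b), $\Iso$.} For the inclusion $\supseteq$, conjugate by $h'$ and generate $\Oct$ by explicit maps. The coordinate permutations and sign changes preserving $S$ up to sign are: swapping $x_1\leftrightarrow x_3$; the swap $(x_1,x_3)\leftrightarrow(x_2,x_4)$; and reflections $x_i\mapsto 1-x_i$ combined with suitable half-shifts. These form a subgroup of index $3$ in $W(B_4)$, and adding $A$ (through $h'$) generates the rest. One checks that each generator permutes $\F$ after translation by $h'$. Note that $A$ fixes the origin, not $h'$, so it is combined with a $D_4$ translation.

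For the inclusion $\subseteq$, any isometry of $\M$ permutes $\Sing$, and its translation part normalises the translation subgroup. The key step is showing that the orthogonal part preserves $D_4$ and the finite configuration of cones. The linear isometries mapping $\Sing$ to itself are the symmetries of the $24$-cell honeycomb. That group is $D_4\rtimes W(F_4)$, with $|W(F_4)|=1152=3\cdot384$. One then excludes the order-$3$ triality elements: they would map the cone at $0$ (type $x_1x_3=x_2x_4$) to a position incompatible with the cones listed in Theorem~\ref{thA}(c). Transitivity follows from $A$ together with $D_4$ and the sign changes mapping $e_1\mapsto$ points of $\Z^4\setminus D_4$. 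The stabilizer has order $384\cdot|D_4|$-index bookkeeping: $\Sing$ has three $D_4$-cosets, so the stabilizer of $0$ has order $384/3=128$. The partition description is read off from the preimage frame.

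\textbf{Part (d).} $\Sing$ equals $\Z^4\cup(h+D_4)=D_4^{*}$ minus the coset $h'+D_4$. That is exactly the vertex set of the Voronoi tessellation of $h'+D_4$, whose cells are $24$-cells. This is standard: the deep holes of $D_4$ form the other cosets of $D_4^*$. The index statement then follows from the order count above.
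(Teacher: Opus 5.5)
Your overall architecture matches the paper's: $G\subseteq\Iso\subseteq G_\Sigma$ with $[G_\Sigma:G]=3$, the orbit of the origin for transitivity, orbit--stabilizer for $|\Iso_0|=128$, and the Voronoi cells of $h'+D_4$ for (d). Your converse in (a), via the tangent cones $C_y$ of Theorem~\ref{thA}, is a valid alternative to the paper's argument (planes of the skeleton plus $h\in\M$). Likewise, obtaining $\Oct$ from the $128$ visible symmetries together with $A$ is a legitimate substitute for the paper's hole-centred symmetric equation; the visible symmetries are realized by $x_i\mapsto 1-x_i$ and by the swaps preserving $\{\{1,3\},\{2,4\}\}$.

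The genuine gap is the identity $S(x)=Q(x)S(Ax)$, which you only say ``should factor''. Everything beyond the visible symmetries rests on it. It is the only source of point-group elements outside that subgroup of order $128$, and the only thing that carries $e_1+D_4$ onto $h+D_4$. Hence it is what gives $|\Iso/D_4|=384$ and $|\Iso_0|=128$. The route you sketch does not get there. First, $S(Ax)=\sigma(u_1)\sigma(u_3)-\sigma(u_2)\sigma(u_4)$ involves the pairs $(u_1,u_3)$ and $(u_2,u_4)$, not $(u_1,u_4)$ and $(u_2,u_3)$. Second, each $x_i$ is a half-sum of the $u_j$ (and vice versa), so expressing one side through $\sigma(u_i)^2$ and $\sigma'(u_i)$ needs half-argument formulas with square roots, not an exact factorization.

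The missing idea is that $S(x)$ and $S(Ax)$ are rational functions of the same four quantities. Put $\xi_{1,2}=\frac12(x_1\pm x_3)$ and $\xi_{3,4}=\frac12(x_2\pm x_4)$. Then $x_{1,3}=\xi_1\pm\xi_2$ and $x_{2,4}=\xi_3\pm\xi_4$, while $(Ax)_{1,3}=\xi_1\pm\xi_3$ and $(Ax)_{2,4}=\xi_2\pm\xi_4$. So $A$ just interchanges $\xi_2$ and $\xi_3$. Your product formula, with $\alpha_k=\sigma^2(\xi_k)$ and $\varphi(a,b)=(a-b)/(1+ab)$, gives
\[
S(x)=\varphi(\alpha_1,\alpha_2)-\varphi(\alpha_3,\alpha_4),\qquad S(Ax)=\varphi(\alpha_1,\alpha_3)-\varphi(\alpha_2,\alpha_4).
\]
Now use the elementary identity
\[
\varphi(a,b)-\varphi(c,d)=\frac{(1+ac)(1+bd)}{(1+ab)(1+cd)}\bigl(\varphi(a,c)-\varphi(b,d)\bigr),
\]
whose two sides both equal $(a-b-c+d+acd-bcd-abc+abd)/\bigl((1+ab)(1+cd)\bigr)$. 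It yields $S=Q\,S\circ A$ with $Q=\frac{(1+\alpha_1\alpha_3)(1+\alpha_2\alpha_4)}{(1+\alpha_1\alpha_2)(1+\alpha_3\alpha_4)}$. Only at this point does your argument for $\frac14\le Q\le4$ apply.

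Two smaller points.
\begin{enumerate}
\item[(i)] $Ah'=h'$, so your remark that $A$ must be combined with a $D_4$-translation is wrong, though harmless.
\item[(ii)] The exclusion of $G_\Sigma\setminus G$ is only asserted, and these elements are not ``order-$3$ triality elements''; for example, the involution $x_1\leftrightarrow x_2$, which fixes $h'$, lies there. Since $[G_\Sigma:G]=3$ is prime, one explicit non-symmetry suffices: $(0,\frac12,\frac12,0)\in\M$ but $S(\frac12,0,\frac12,0)=1$. Your cone argument can also be completed. The three cone types are exactly the three splittings of $\F$ into two pairs. Any $L\in\Aut(D_4)\setminus\Oct$ sends $f_1$ to a root outside $\F$, so $L(C_0)$ is none of them.
\end{enumerate}
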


The centres $h'+D_4$ of the $24$-cells will be called the \emph{holes} of $\M$; at these points $|S|$ attains its maximum value $2$. The singular points of the two kinds and the holes are the four cosets of $D_4$ in the dual lattice $D_4^*=\Z^4\cup(h+\Z^4)$ (the Hurwitz integral quaternions). Part (b) shows that the symmetry group of $\M$ is generated by the translations by $D_4$ and by those symmetries of the honeycomb which fix the hole $h'$ and preserve the frame $\F$; by (d), it is a subgroup of index three in the symmetry group of the honeycomb. Note that $\Iso$ does \emph{not} preserve the two kinds of singular points: the involution $A$ in (c) interchanges them. This is the hidden symmetry mentioned above.

Our third result describes the topology of $\M$. The local structure near each singular point is described by Theorem A, but the global topology is not apparent from it. Computing the homology of the compact quotient $\M/D_4$ provides this global information: it shows, for instance, that every closed curve in the ambient $4$-torus is homologous to a curve on $\M/D_4$, and the one-sidedness of $\M/D_4$ is reflected in the vanishing of $H_3$ and in the $2$-torsion of $H_2$, as for non-orientable closed $3$-manifolds.

To state the result, let $\Mo$ be the union of the $2$-planes
$$
\{x\in\R{4}:\ x_i=a,\ x_j=b\},\qquad i\in\{1,3\},\ j\in\{2,4\},\ a,b\in\Z .
$$
Each of these planes lies in $\M$ (because $\sigma$ vanishes on $\Z$), and we call $\Mo$ the \emph{skeleton} of $\M$. For $n\in\Z^4$ we denote by $P_n=n+[0,1]^4$ the closed unit cube with the vertex $n$, and by $T_n$ the $2$-torus
$$
T_n=\Bigl\{x\in P_n:\ \begin{array}{l}(x_1,x_3)\in\partial\bigl([n_1,n_1+1]\times[n_3,n_3+1]\bigr),\\[2pt] (x_2,x_4)\in\partial\bigl([n_2,n_2+1]\times[n_4,n_4+1]\bigr)\end{array}\Bigr\};
$$
clearly $T_n\subset\Mo\cap\partial P_n$.

\begin{maintheorem}\label{thC}
\begin{enumerate}
\item[(a)] $\M$ does not meet the interior of $P_n$ if $n\notin D_4$. If $n\in D_4$, then $K_n:=\M\cap P_n$ is homeomorphic to the cone over the torus $T_n$, with vertex at the singular point $n+h$ (the centre of $P_n$). Thus
$$
\M=\Mo\cup\bigcup_{n\in D_4}K_n,
$$
and two different sets $K_n$ meet only along $\Mo$.
\item[(b)] $\M$ and $\Mreg$ are connected.
\item[(c)] The quotient $\M/D_4$ is a compact connected minimal hypersurface in the flat torus $\R{4}/D_4$ with exactly three singular points, the classes of $0$, $e_1$ and $h$. Its regular part is non-orientable, its Euler characteristic equals $3$, and its integral homology groups are
$$
H_0=\Z,\qquad H_1\cong\Z^4,\qquad H_2\cong\Z^6\oplus\Z_2,\qquad H_3=0;
$$
moreover, the inclusion $\M/D_4\subset\R{4}/D_4$ induces an isomorphism of the first homology groups.
\item[(d)] Let $D_4^+=\{n\in D_4: n_1+n_3\ \text{even}\}$. Then $\M/D_4^+$ is a double cover of $\M/D_4$ with orientable regular part; its Euler characteristic equals $6$ and $H_3(\M/D_4^+)\cong\Z$.
\end{enumerate}
\end{maintheorem}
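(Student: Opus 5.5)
Part (a) is a sign analysis, and I will do it first. Since $\sigma$ vanishes exactly on $\Z$ and $\sigma(t+1)=-\sigma(t)$, on the interior of $P_n$ each $\sigma(x_i)$ has constant sign $(-1)^{n_i}$. Hence $\sigma(x_1)\sigma(x_3)$ has sign $(-1)^{n_1+n_3}$ and $\sigma(x_2)\sigma(x_4)$ has sign $(-1)^{n_2+n_4}$. If $n\notin D_4$ these signs are opposite, so $S\neq0$ in the interior of $P_n$.

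For $n\in D_4$ I translate by $-n$ (allowed by Theorem~\ref{thA}(a)) and take $n=0$. Put $u=\log(\sigma(x_1)\sigma(x_3))$ and $v=\log(\sigma(x_2)\sigma(x_4))$ on $(0,1)^4$. Then $K_0^\circ=\{u=v\}$. The map $(x_1,x_3)\mapsto\sigma(x_1)\sigma(x_3)$ on the open square $(0,1)^2$ has maximum $1$ at $(\tfrac12,\tfrac12)$ and tends to $0$ at the boundary. It is symmetric about the centre and strictly monotone along rays from the centre. The last claim follows from the log-concavity of $\sigma$ on $(0,1)$: $(\log\sigma)''<0$ by the ODE, using $\sigma''=-2\varpi^2\sigma^3$. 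So each level set $\{\sigma(x_1)\sigma(x_3)=c\}$, $0<c<1$, is a star-shaped Jordan curve $\gamma_c$ around the centre. Writing $c=\sigma(x_1)\sigma(x_3)$ and $c'=\sigma(x_2)\sigma(x_4)$, the set $K_0$ is $\{c=c'\}$. This is $\bigcup_{c\in[0,1]}\gamma_c\times\gamma'_c$, where $\gamma_0\times\gamma'_0=T_0$ and $\gamma_1\times\gamma'_1$ is the point $h$. Parametrizing each $\gamma_c$ by angle about the centre gives the homeomorphism with $\Cone(T_0)$. The decomposition of $\M$ then follows, since $\M\cap\partial P_n$ lies in $\Mo$: on a facet $x_i\in\Z$, one product vanishes, so the other must vanish too.

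For (b), $\M$ is connected because each $K_n$ is a connected cone meeting the connected skeleton $\Mo$. The skeleton is a union of integer-translated planes that meet along lines and points, so it is connected. For $\Mreg$ I remove the vertices. Each $K_n\setminus\{n+h\}$ deformation retracts to $T_n$. Near each point of $\Z^4$, the link of the Clifford cone is a torus, which is connected. So removing a singular point does not disconnect a neighbourhood, and $\Mreg$ is connected.

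For (c) I build a CW structure on $\M/D_4$ from the decomposition. The $2$-skeleton is $\Mo/D_4$: a finite complex of squares in the coordinate planes. Then one cone cell is attached per $D_4$-orbit of cubes, and there are $[\Z^4:D_4]/2\cdot\ldots$ such orbits; concretely, $\Z^4/D_4$ has two classes of vertices, and the even cubes form one orbit. This gives $\chi(\M/D_4)=\chi(\Mo/D_4)-\chi(T)+1=\chi(\Mo/D_4)+1$, since each attached cone is contractible and $\chi(T)=0$. Computing $\chi(\Mo/D_4)$ is a count of vertices, edges and squares modulo $D_4$, and it should give $2$.

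The homology needs the cellular chain complex. I would take the cube structure of $\R{4}/D_4$ restricted to $\Mo$: $2$ vertices, the edges in the $4$ directions, and the squares in the $(i,j)$-planes with $i\in\{1,3\}$ and $j\in\{2,4\}$. To this I add, for each orbit of $K_n$, a $3$-chain whose boundary is the fundamental class of $T_n$ as a $2$-cycle, and I compute the boundary maps. The Clifford torus $T_n$ is a sum of $16$ squares with signs, and the key step is that modulo $D_4$ the translations act with signs. I expect the sign bookkeeping to produce a boundary equal to twice a generator, which gives the $\Z_2$ torsion and forces $H_3=0$. The $H_1$ isomorphism follows because the edges generate and the squares kill the same relations as in the torus. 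Non-orientability of the regular part will be shown with an orientation-reversing loop: the normal $\nabla S$ changes sign under translation by a vector $n\in D_4$ with $n_1+n_3$ odd, because $S(x+n)=(-1)^{n_1+n_3}S(x)$.

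For (d), $D_4^+$ is the kernel of that sign character, so $S$ is invariant under $D_4^+$. Therefore $\nabla S$ orients the regular part of $\M/D_4^+$. The multiplicativity of $\chi$ under the double cover (branching only over points, and $\chi$ of a CW complex multiplies) gives $6$. $H_3\cong\Z$ comes from the fundamental class of the orientable pseudomanifold. The main obstacle is the orientation and sign bookkeeping in the chain computation for $H_2$ and $H_3$.
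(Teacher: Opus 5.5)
\paragraph{What matches the paper.} Your route for (a), (b), the non-orientability in (c), and (d) is the paper's. That means the sign analysis on open cubes, log-concavity giving radial monotonicity and the cone parametrization, connectedness through the torus links, and the sign character $n\mapsto(-1)^{n_1+n_3}$. Your Euler characteristic $\chi(X)=\chi(X_0)+1=3$, with $\chi(X_0)=2-8+8=2$, is also correct. One small correction in (d): the covering $\M/D_4^+\to\M/D_4$ is unbranched, because $D_4/D_4^+$ acts on $\R{4}/D_4^+$ by a fixed-point-free translation. If there really were branch points, $\chi$ would not simply double.

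\paragraph{The gap: the homology in (c).} You model each cone $K_n$ by a single $3$-chain whose boundary is $\varphi_*[T_0]$. In effect you treat $\Cone(T_0)$ as a $3$-cell attached to $X_0=\Mo/D_4$, and it is not one. By excision, $H_k(X,X_0)\cong H_k(\Cone T_0,T_0)\cong\tilde H_{k-1}(T_0)$. This is $\Z$ for $k=3$, but it is also $\Z^2$ for $k=2$. Your chain complex (two vertices, eight edges, eight squares, one $3$-cell) has Euler characteristic $2-1=1$, which contradicts your own $\chi=3$. It also gives $H_2\cong\Z^5/\langle 2\zeta\rangle\cong\Z^4\oplus\Z_2$, so you would miss a $\Z^2$.

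The missing classes come from the two generating circles $\partial Q^{13}_0\times\{p\}$ and $\{p\}\times\partial Q^{24}_0$ of $T_0$. Their images in $X_0$ have zero displacement, so each bounds a $2$-chain $b$ in $X_0$. Then $b$ minus the cone over that circle is a new $2$-cycle of $X$. The same oversight affects your $H_1$ argument: the cone could a priori kill $\varphi_*(H_1(T_0))$ in $H_1(X_0)$, and in your model this possibility never appears.

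\paragraph{How to repair it.} Either use the full cone decomposition, with cones over the $16$ vertices, $32$ edges and $16$ squares of $T_0$ (giving $3,24,40,16$ cells), or use the exact sequence of the pair $(X,X_0)$ as the paper does. Either way you need three inputs that your proposal only anticipates.
\begin{enumerate}
\item[(1)] $H_2(X_0)=Z_2(X_0)\cong\Z^5$. The image of $\partial_2$ has rank $3$, since the mixed pairs form the $4$-cycle $1$--$2$--$3$--$4$--$1$ on the index set.
\item[(2)] $\varphi_*=0$ on $H_1(T_0)$. This is exactly what makes $H_1(X_0)\to H_1(X)$ injective.
\item[(3)] $\varphi_*[T_0]=2\zeta$ with $\zeta$ \emph{primitive} in $H_2(X_0)$. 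This requires checking that each of the eight squares $S^\varepsilon_{ij}$ is hit twice with the same sign. Primitivity is what makes the torsion exactly $\Z_2$ rather than $\Z_{2k}$; your ``twice a generator'' has to be proved, not expected.
\end{enumerate}
With these, the sequence $0\to H_2(X_0)/\langle 2\zeta\rangle\to H_2(X)\to H_1(T_0)\to 0$ splits and gives $H_2(X)\cong\Z^4\oplus\Z_2\oplus\Z^2$.
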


Figure~\ref{fig:strata} illustrates part (a). Equivalently, the function $\lambda(x)=\sigma(x_1)\sigma(x_3)=\sigma(x_2)\sigma(x_4)$ on $\M$ vanishes exactly on the skeleton, its level sets $\{\lambda=r\}$, $0<|r|<1$, are disjoint unions of $2$-tori (one in each cube $P_n$, $n\in D_4$, with the right sign), and the tori shrink to the singular points $n+h$ as $|r|\to1$.

Our last result makes Theorem~\ref{thB}(b) transparent by looking at $\M$ from a hole. We put the origin at the hole $h'$, write $x=h'+y$, and use the \emph{frame coordinates}
$$
\xi_k=\xi_k(y)=\tfrac12\scal{y}{f_k},\qquad k=1,\dots,4,
$$
which are Cartesian coordinates adapted to the frame $\F$, up to the common factor $\frac1{\sqrt2}$. In these coordinates $\M$ is given by an equation which is symmetric in the four frame directions.
The second description of $\M$ uses theta series. For a translate $\Lambda=w+L$ of a lattice $L\subset\R{4}$ we consider the theta series
$$
\vartheta_\Lambda(y;\tau)=\sum_{v\in\Lambda}e^{\pi\mathrm{i}\tau|v|^2+2\pi\mathrm{i}\scal{v}{y}},\qquad y\in\R{4},\quad \operatorname{Im}\tau>0,
$$
which converges absolutely and is real analytic in $y$; for $y=0$ and $w=0$ it is the classical theta series of the lattice $L$ \cite[Chapters~2 and~4]{Conway}. We shall use it for the translate
$$
\Lambda=\tfrac12(e_1+e_3)+D_4^*=\tfrac12f_1+D_4^*
$$
of the dual lattice $D_4^*$, i.e. of the Hurwitz order of integral quaternions (in quaternionic notation, $\Lambda=\frac12(1+\mathrm{j})+\mathcal{O}$; see Remark~\ref{rem:Hurwitz}), and at the point $\tau=1+\mathrm{i}$ of the upper half-plane. Note that $\Lambda$ is not a lattice, since $0\notin\Lambda$; by its \emph{orthogonal group} we mean the group of orthogonal transformations $L$ of $\R{4}$ with $L(\Lambda)=\Lambda$.

\begin{maintheorem}\label{thD}
Let $\gamma(t)=\sigma(t+\frac12)$ be the normalized lemniscatic cosine, put $C_k=\gamma^2(\xi_k(y))$, $k=1,\dots,4$, and let $\mathcal{E}_j(C)$ denote the $j$-th elementary symmetric function of $C_1,\dots,C_4$. Then
$$
\M=h'+\Bigl\{y\in\R{4}:\ \sum_{k=1}^4\arctan C_k=\frac\pi2\Bigr\}
=h'+\bigl\{y\in\R{4}:\ \mathcal{E}_2(C)=1+\mathcal{E}_4(C)\bigr\}.
$$
Moreover, the function $-\mathrm{i}\,\vartheta_\Lambda(y;1+\mathrm{i})$ is real, and it is the product of $S(h'+y)$ and a positive real analytic function; in particular,
$$
\M=h'+\bigl\{y\in\R{4}:\ \vartheta_\Lambda(y;1+\mathrm{i})=0\bigr\}.
$$
Finally, the orthogonal group of $\Lambda$ is exactly $\Oct$.
\end{maintheorem}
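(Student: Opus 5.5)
Writing $x=h'+y$, I would first express $S(h'+y)$ in terms of $\gamma$ evaluated at the frame coordinates. Since $h'=\frac12(1,1,1,-1)$, we have $\sigma(x_k)=\sigma(y_k+\frac12)=\gamma(y_k)$ for $k=1,2,3$ and $\sigma(x_4)=\sigma(y_4-\frac12)=-\gamma(y_4)$. Hence
$$S(h'+y)=\gamma(y_1)\gamma(y_3)+\gamma(y_2)\gamma(y_4).$$
The frame coordinates are $\xi_1=\frac12(y_1+y_3)$, $\xi_2=\frac12(y_1-y_3)$, $\xi_3=\frac12(y_2+y_4)$ and $\xi_4=\frac12(y_2-y_4)$, so that $y_1=\xi_1+\xi_2$, $y_3=\xi_1-\xi_2$, and similarly for $y_2,y_4$. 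The key tool is the lemniscatic product formula
$$\gamma(a+b)\gamma(a-b)=\frac{\gamma^2(a)-\sigma^2(b)}{1+\sigma^2(a)\sigma^2(b)},$$
a consequence of the addition theorem, rewritten using $\sigma^2=(1-\gamma^2)/(1+\gamma^2)$ (the normalized identity $\sinl^2+\cosl^2+\sinl^2\cosl^2=1$). Substituting gives
$$\gamma(y_1)\gamma(y_3)=\frac{C_1+C_2}{1+C_1C_2}.$$
Adding the analogous expression for the pair $(y_2,y_4)$ shows that $S(h'+y)(1+C_1C_2)(1+C_3C_4)$ equals $(C_1+C_2)(1+C_3C_4)+(C_3+C_4)(1+C_1C_2)$.

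I need to check the sign conventions here carefully: the target equation $\mathcal{E}_2=1+\mathcal{E}_4$ is the condition $\tan\bigl(\sum\arctan C_k\bigr)=\infty$, i.e. $1-\mathcal{E}_2+\mathcal{E}_4=0$. So the pairing should produce $\frac{1-C_1C_2}{C_1+C_2}$-type terms, i.e. cotangent rather than tangent. I expect that one pair actually gives $\frac{1-C_1C_2}{1+C_1C_2}$, via the companion formula $\sigma(a+b)\sigma(a-b)=\frac{\sigma^2(a)-\sigma^2(b)}{1+\sigma^2\sigma^2}$, depending on which coordinates pair as $\xi_1\pm\xi_2$. If so, $S(h'+y)$ times a positive factor equals
$$\tan\bigl(\tfrac\pi2-\arctan C_1-\arctan C_2\bigr)-\tan\bigl(\arctan C_3+\arctan C_4\bigr),$$
up to a positive factor. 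Since each $C_k\in[0,1]$, the angles lie in $[0,\pi/2]$. Using the monotonicity of $\tan$ on the relevant range, the zero set is then $\sum\arctan C_k=\pi/2$, and expanding the tangent addition formula gives $\mathcal{E}_2=1+\mathcal{E}_4$. The main bookkeeping is which products come out as $\sigma\sigma$ and which as $\gamma\gamma$; I would verify this at $y=0$, where $S=2$ and all $C_k=1$.

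For the theta series, I would decompose $\Lambda=\frac12f_1+D_4^*$ into cosets adapted to the frame. In the frame basis $f_k/2$ (which has norm $1/\sqrt2$), $D_4^*$ and its translate split as unions of products of one-dimensional lattices, so $\vartheta_\Lambda$ becomes a sum of products of Jacobi theta functions $\vartheta_j(\xi_k;\tau')$ at a rescaled $\tau'$. At $\tau=1+\mathrm{i}$ these are lemniscatic, since the modulus is $k=1/\sqrt2$ (or the related $\tau=\mathrm{i}$ after the shift $\tau\mapsto\tau+1$, which introduces factors $e^{\pi\mathrm{i}|v|^2}$ that are signs determined by $|v|^2\bmod 2$). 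The ratios $\vartheta_j/\vartheta_k$ then express $\gamma(\xi)$ and $\sigma(\xi)$, up to constants. I would show that $-\mathrm{i}\vartheta_\Lambda$ equals a product of positive theta factors (for example $\prod_k\vartheta_3(\xi_k)^2$-type terms, which are nonvanishing) times a multiple of $1-\mathcal{E}_2+\mathcal{E}_4$ or directly of $S(h'+y)$. Matching the constant, and checking positivity at $y=0$, finishes this part. This identification, including the phase bookkeeping that makes $-\mathrm{i}\vartheta_\Lambda$ real, is the main obstacle; I would first work out the coset decomposition of $\Lambda$ by the parity of coordinates in the basis $f_k/2$.

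Finally, for the orthogonal group, observe that $\Oct$ preserves $D_4^*$, which is generated by the $f_k$ and $h$-type vectors, and fixes $\frac12 f_1$ modulo $D_4^*$ because $\frac12 f_k-\frac12 f_1\in D_4^*$. Hence $\Oct\subseteq O(\Lambda)$. Conversely, any $L$ with $L\Lambda=\Lambda$ preserves the difference lattice $D_4^*$, so $L\in W(F_4)$, of order $1152$, and it must fix the coset $\frac12 f_1+D_4^*$ in $\frac12 D_4^*/D_4^*$. The vectors of minimal norm in $\Lambda$ are exactly the eight vectors $\pm\frac12 f_k$, so $L$ permutes $\F$, and therefore $L\in\Oct$.
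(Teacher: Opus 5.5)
\textbf{Part 1 (the symmetric equation).} This part does not go through as written. Your product formula has the wrong denominator. The addition theorem \eqref{additionN} with $u=\frac12+a$, $v=b$ gives $\gamma(a+b)\gamma(a-b)=\frac{\gamma^2(a)-\sigma^2(b)}{1+\gamma^2(a)\sigma^2(b)}$. Yours already fails at $a=0$: the left side is $\gamma^2(b)$, while your formula gives $1-\sigma^2(b)$.

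Consequently the pair identity $\gamma(y_1)\gamma(y_3)=\frac{C_1+C_2}{1+C_1C_2}$ is false. At $\xi_1=\frac12$, $\xi_2=0$ the left side is $\gamma^2(\frac12)=0$ and the right side is $1$. This is why you obtained an everywhere nonnegative expression.

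The repair you then guess, with one pair producing a cotangent, is also wrong. The expression $\cot(\beta_1+\beta_2)-\tan(\beta_3+\beta_4)$ is not $S(h'+y)$ times a positive factor. It is not even defined at $y=0$, where $S=2$, so your proposed check would expose it.

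The two pairs play symmetric roles. Using $\sigma^2(\xi_2)=\frac{1-C_2}{1+C_2}$, the correct formula gives
\[
\gamma(y_1)\gamma(y_3)=\frac{C_1+C_2+C_1C_2-1}{1+C_1+C_2-C_1C_2}=\tan\Bigl(\beta_1+\beta_2-\frac{\pi}{4}\Bigr),\qquad \beta_k=\arctan C_k,
\]
and likewise $\gamma(y_2)\gamma(y_4)=\tan(\beta_3+\beta_4-\frac\pi4)$. The missing shift by $\frac\pi4$ is the point. Both angles $a,b$ lie in $[-\frac\pi4,\frac\pi4]$, so
\[
S(h'+y)=\tan a+\tan b=-\cos\Bigl(\sum_k\beta_k\Bigr)\Big/(\cos a\cos b),\qquad \cos a\cos b\ge\tfrac12 .
\]
Moreover $\cos\sum_k\beta_k=(1-\mathcal{E}_2+\mathcal{E}_4)/\prod_k(1+C_k^2)^{1/2}$. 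Since $\sum_k\beta_k\in[0,\pi]$, both descriptions of $\M$ follow at once. This is the paper's argument.

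\textbf{Part 2 (the theta series).} You leave open the identification of $-\mathrm{i}\,\vartheta_\Lambda$ with $S(h'+y)$ times a positive function, and the frame-coordinate route you sketch lacks the identity it needs. Factoring in the $\xi_k$ gives
\[
\vartheta_\Lambda(y;1+\mathrm{i})=\tfrac12\Bigl(\prod_k g(\xi_k)-\prod_k\overline{g(\xi_k)}\Bigr),\qquad g(t)=\theta_3\bigl(\pi t\,\big|\,\tfrac{1+\mathrm{i}}2\bigr)=\theta_3(2\pi t|2\mathrm{i})+\mathrm{i}\,\theta_2(2\pi t|2\mathrm{i}),
\]
so reality is immediate. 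However, the ratio $T=\theta_2(2\pi t|2\mathrm{i})/\theta_3(2\pi t|2\mathrm{i})$ that appears is not $\gamma$ or $\sigma$ up to a constant. It is tied to $\gamma^2$ by the M\"obius relation $\gamma^2=\frac{T+\varrho}{1-\varrho T}$ with $\varrho=\sqrt2-1$, which requires the duplication (Landen) formulas.

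The paper avoids this by working at $\tau=\mathrm{i}$ in the original coordinates. It proves $\sigma(t)=\theta_1(\pi t|\mathrm{i})/\theta_3(\pi t|\mathrm{i})$: the two sides have the same periods, zeros and poles, and the constant comes from Jacobi's imaginary transformation at $\tau=\mathrm{i}$. It then multiplies $S$ by $\prod_k\theta_3(\pi x_k|\mathrm{i})>0$ and expands the two resulting products as lattice sums over $\Lambda_1=(\frac12,0,\frac12,0)+\Z^4$ and $\Lambda_2=(0,\frac12,0,\frac12)+\Z^4$. This is exactly your parity split of $\eta$ by $\eta_1+\eta_2$ and $\eta_3+\eta_4$, but with the factorization done in the $x_k$ rather than the $\xi_k$. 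After substituting $x=h'+y$, every term carries the sign $(-1)^{|v|^2-\frac12}$, which is precisely what $e^{\pi\mathrm{i}(1+\mathrm{i})|v|^2}=\mathrm{i}\,(-1)^{|v|^2-\frac12}e^{-\pi|v|^2}$ supplies.

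\textbf{Part 3 (the orthogonal group).} Your argument via the minimal vectors $\pm\frac12 f_k$ of $\Lambda$ is correct and coincides with the paper's.
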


In particular, the hidden symmetry $A$ in \eqref{Amatrix} becomes an actual invariance of the theta series, and the first description gives a second, one-line proof of minimality (Remark~\ref{rem:secondproof}): seen from a hole, $\M$ is a separable hypersurface $\sum\phi(\xi_k)=0$ with $\phi'^2=\varpi^2\cos4\phi$. In quaternionic terms (\S\ref{ssec:quaternionic}), the singular points split into three equivalent types labelled by the imaginary units $\mathrm{i},\mathrm{j},\mathrm{k}$, the tangent cone at a point of type $q$ is the Clifford cone of the complex structure $q$, and $A$ becomes the transposition of two imaginary units.

The symmetric equation also gives a simple way to see $\M$. Its sections by the hyperplanes $\xi_4=c$ orthogonal to the frame vector $f_4$ are triply periodic surfaces in $\R{3}$ with full cubic symmetry, shown in Figure~\ref{fig:sections}. For $c=\frac12$ the section is a packing of closed ``pillows'' which touch each other exactly at singular points of $\M$; for other values of $c$ the contact points open into necks. Although these pictures resemble well-known triply periodic minimal surfaces in $\R{3}$, such as Schwarz's P-surface, they are of a completely different nature: they are three-dimensional sections of a single minimal hypersurface of $\R{4}$, and they are not minimal surfaces themselves (\S\ref{ssec:sections}).

\medskip
\noindent\textit{Background.}
The hypersurface $\M$ was found by the author in 2008. It grew out of the earlier joint work of the author and V.~Sergienko on periodic zero mean curvature surfaces in the Lorentz--Minkowski space \cite{SergTk1999}, where elliptic functions and isolated conical singularities appear in a similar way. The example was presented in talks (for example, \cite{TkGranada}), and a large part of the present text was written in 2016.

On each open unit cube $n+(0,1)^4$ with $n\in D_4$, the hypersurface $\M$ is given by a separable equation $F_1(x_1)+F_2(x_2)+F_3(x_3)+F_4(x_4)=0$ (see \S\ref{ssec:separable}). In this sense it is a four-dimensional relative of Scherk's doubly periodic surface $e^{x_3}\cos x_1=\cos x_2$, the best known solution of Weingarten's problem on separable minimal surfaces in $\R{3}$ \cite{Weing}. In our approach, however, this separable form is a convenient accident of dimension four rather than a method, and it serves only as motivation. The relation to the literature on separable hypersurfaces is discussed in \S\ref{ssec:separable}. The present paper is not concerned with the separable ansatz; its subject is the geometry of $\M$: its singularities, its symmetries and lattice structure, its arithmetic (theta-function) description, and its topology.

\medskip
\noindent\textit{Organization of the paper.}
In Section~\ref{sec:lemn} we collect the facts on the lemniscatic functions that we need. Theorem~\ref{thA} is proved in Section~\ref{sec:thA}. In Section~\ref{sec:lattices} we recall some facts about the lattice $D_4$, its dual and the $24$-cell honeycomb. Theorem~\ref{thD} is proved in Section~\ref{sec:hole}, and it is used in Section~\ref{sec:sym} to prove Theorem~\ref{thB}. Theorem~\ref{thC} is proved in Section~\ref{sec:top}. In Section~\ref{sec:index} we show that the Morse index of $\M/D_4$ is infinite and explain the minimality of $\M$ by a factorization of the mean curvature operator. Section~\ref{sec:remarks} contains some remarks (on the lemniscatic catenoid and on the relation to separable hypersurfaces) and open questions.

\medskip
\noindent\textit{Notation.} For the reader's convenience we collect the main notation used throughout the paper.

\smallskip
\begin{tabular}{@{}p{0.28\textwidth}p{0.72\textwidth}@{}}
$\sinl t$, $\cosl t$, $\varpi$ & lemniscatic sine and cosine, lemniscate constant (Section~\ref{sec:lemn})\\[2pt]
$\sigma(t)=\sinl(\varpi t)$ & normalized lemniscatic sine, of period $2$\\[2pt]
$\gamma(t)=\cosl(\varpi t)$ & normalized lemniscatic cosine, $\gamma(t)=\sigma(t+\frac12)$\\[2pt]
$S(x)$, $\M=S^{-1}(0)$ & the defining function \eqref{Mdef} and the hypersurface\\[2pt]
$\Sing$, $\Mreg$ & the singular set $\Z^4\cup(h+D_4)$ and the regular part of $\M$\\[2pt]
$\Mo$ & the skeleton of $\M$, a union of coordinate $2$-planes\\[2pt]
$e_1,\dots,e_4$ & the standard basis of $\R{4}$\\[2pt]
$D_4$, $D_4^*$ & the checkerboard lattice and its dual $\Z^4\cup(h+\Z^4)$\\[2pt]
$\F=\{\pm f_1,\dots,\pm f_4\}$ & the frame \eqref{frame}: $f_{1,2}=e_1\pm e_3$, $f_{3,4}=e_2\pm e_4$\\[2pt]
$\xi_k=\frac12\scal{\cdot}{f_k}$ & frame coordinates \eqref{xi}\\[2pt]
$\mathcal{E}_j(C)$ & elementary symmetric functions of $C_k=\gamma^2(\xi_k)$ (Section~\ref{sec:hole})\\[2pt]
$\Oct$ & the hyperoctahedral group of $\F$, of order $384$\\[2pt]
$A$ & the hidden symmetry \eqref{Amatrix}\\[2pt]
$P_n$, $T_n$, $K_n$ & the unit cube $n+[0,1]^4$, the torus in its boundary, and $\M\cap P_n$\\[2pt]
$\Lambda$, $\vartheta_\Lambda$ & the set $\frac12(e_1+e_3)+D_4^*$ and its theta series (Section~\ref{sec:hole})\\
\end{tabular}

\section{Lemniscatic functions}\label{sec:lemn}

In this section we collect some classical facts on the lemniscatic sine; see, for example, \cite{Markush,LangerSinger,Schappacher,AkhiezerEll,Lawden}. For the reader's convenience we indicate short proofs.

Let $s(t)=\sinl t$ be the solution of the initial value problem
\begin{equation}\label{ODE2}
s''=-2s^3,\qquad s(0)=0,\quad s'(0)=1.
\end{equation}
Multiplying by $s'$ and integrating, we obtain the first integral
\begin{equation}\label{ODE1}
s'(t)^2=1-s(t)^4 .
\end{equation}
In particular $|s|\le1$, hence $s$ is defined, and real analytic, on the whole real line. Let
$$
\varpi=2\int_0^1\frac{du}{\sqrt{1-u^4}}=\frac{\Gamma(\frac14)^2}{2\sqrt{2\pi}}=2.6220575542\ldots
$$
be the lemniscate constant; the second equality follows from the substitution $u^4=v$ and the Euler integral for the beta function.

\begin{lemma}\label{lem:sl}
The function $s=\sinl$ has the following properties.
\begin{enumerate}
\item[(i)] $s$ is odd, $s(\varpi-t)=s(t)$ and $s(t+\varpi)=-s(t)$; in particular $s$ has period $2\varpi$.
\item[(ii)] $s(t)=0$ if and only if $t\in\varpi\Z$; $s>0$ on $(0,\varpi)$; $|s(t)|=1$ if and only if $t\in\frac{\varpi}2+\varpi\Z$, and $s(\frac{\varpi}{2}+k\varpi)=(-1)^k$.
\item[(iii)] The lemniscatic cosine $\cosl t:=s(\frac{\varpi}{2}-t)=s(\frac{\varpi}{2}+t)$ is even, satisfies $\cosl''=-2\cosl^3$, $\cosl(0)=1$, $\cosl'(0)=0$, and
\begin{equation}\label{Taylor}
s(t)=t-\frac{t^5}{10}+O(t^9),\qquad \cosl t=1-t^2+O(t^4)\qquad (t\to0).
\end{equation}
\item[(iv)] \textup{(Addition theorem)} For all real $u,v$,
\begin{equation}\label{addition}
s(u+v)\,s(u-v)=\frac{s^2(u)-s^2(v)}{1+s^2(u)s^2(v)} .
\end{equation}
\item[(v)] \textup{(Log-concavity)} On $(0,\varpi)$ one has $(\log s)''=-(1+s^4)/s^2<0$.
\item[(vi)] $\cosl t=s'(t)/(1+s^2(t))$ and $(1+s^2(t))(1+\cosl^2t)=2$ for all real $t$.
\end{enumerate}
\end{lemma}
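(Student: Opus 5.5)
The plan is to derive everything from the ODE \eqref{ODE2}, its first integral \eqref{ODE1}, and uniqueness of solutions of initial value problems. The addition theorem (iv) is the only genuinely non-trivial item and will be handled by a differential-equation argument.

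\emph{Parity, symmetry and zeros.} Both $-s(-t)$ and $s(t)$ solve \eqref{ODE2} with the same initial data, so $s$ is odd. On the interval where $s'>0$ we have $t=\int_0^{s}du/\sqrt{1-u^4}$. Hence $s$ increases from $0$ and reaches the value $1$ at $t=\varpi/2$, where $s'=0$ by \eqref{ODE1}. Then $s(\varpi-t)$ and $s(t)$ solve the same second-order equation with equal data at $t=\varpi/2$, namely value $1$ and derivative $0$, so they coincide. Combining this with oddness gives $s(t+\varpi)=s(-t)=-s(t)$, and period $2\varpi$ follows. For (ii): $s>0$ on $(0,\varpi)$ by monotonicity and the reflection symmetry. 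The sign rule $s(t+\varpi)=-s(t)$ then locates all zeros at $\varpi\Z$. Moreover $|s|=1$ forces $s'=0$; on $[0,\varpi]$ this happens only at $\varpi/2$, since $s'\ne0$ elsewhere on the increasing branch.

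\emph{Cosine, Taylor expansion and (vi).} For (iii), the identity $\cosl t=s(\varpi/2-t)=s(\varpi/2+t)$ is (i). The equation $\cosl''=-2\cosl^3$ holds because \eqref{ODE2} is invariant under $t\mapsto c\pm t$. The Taylor coefficients come from substituting into \eqref{ODE2}. For (v), differentiate to get $(\log s)''=(s''s-s'^2)/s^2=(-2s^4-1+s^4)/s^2$. For (vi), set $c=\cosl$ and note that $c$ satisfies $c'^2=1-c^4$. The relation $(1+s^2)(1+c^2)=2$ holds at $t=0$. To propagate it, set $g=s'/(1+s^2)$. A direct computation using \eqref{ODE1} gives $g^2=(1-s^2)/(1+s^2)$, so $g$ satisfies $g'^2=1-g^4$. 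Also $g(0)=1$ and $g'(0)=0$, and $g$ satisfies the second-order equation obtained by differentiating. Uniqueness then gives $g=\cosl$, and $1+g^2=2/(1+s^2)$ yields the product identity.

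\emph{Addition theorem.} This is the main obstacle. Fix $v$ and consider both sides of \eqref{addition} as functions of $u$. I would first establish the classical formula
$$
s(u+v)=\frac{s(u)s'(v)+s(v)s'(u)}{1+s^2(u)s^2(v)} .
$$
The standard route is to check that the right-hand side $F(u)$ satisfies $F'^2=1-F^4$, with $F(0)=s(v)$ and $F'(0)=s'(v)$. Differentiating once more gives $F''=-2F^3$, and uniqueness yields $F(u)=s(u+v)$. The computation of $F'$ is lengthy but mechanical. An alternative is Euler's argument: verify directly that $u+v=\mathrm{const}$ implies $dF=0$ along the curve, which sidesteps the explicit check of $F'^2=1-F^4$. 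Once the formula is available, multiply it by the same expression with $v$ replaced by $-v$, using that $s'$ is even. The numerator becomes $s^2(u)s'^2(v)-s^2(v)s'^2(u)$. Substituting $s'^2=1-s^4$ turns this into $(s^2(u)-s^2(v))(1+s^2(u)s^2(v))$. Dividing by $(1+s^2(u)s^2(v))^2$ gives \eqref{addition}. If the direct verification becomes unwieldy, I would instead extend to the complex domain, where $s$ is meromorphic. I would then compare poles and zeros of both sides of \eqref{addition} as elliptic functions of $u$, and fix the constant at $u=v$.
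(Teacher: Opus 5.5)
Your proposal is correct. For (i)--(iii) and (v) it is the paper's argument: uniqueness for $s''=-2s^3$, the inverse-function formula on the increasing branch, reflection at $\varpi/2$, power series, and the computation of $ss''-s'^2$. In (iv), the final step is also identical: multiply Euler's formula for $s(u+v)$ by its counterpart with $-v$ and substitute $s'^2=1-s^4$.

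There are two genuine differences.

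\emph{Euler's formula in (iv).} The paper simply cites it, whereas you prove it. Of your options, Euler's own argument is the shortest. Put $N=s(u)s'(v)+s(v)s'(u)$ and $D=1+s^2(u)s^2(v)$. Then $(\partial_u-\partial_v)N=2s(u)s(v)\bigl(s^2(v)-s^2(u)\bigr)$, and, after using $s'^2=1-s^4$, $N(\partial_u-\partial_v)D=2s(u)s(v)\bigl(s^2(v)-s^2(u)\bigr)D$. Hence $N/D$ depends only on $u+v$, and setting $v=0$ shows that it equals $s(u+v)$.

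\emph{Part (vi).} The paper specializes Euler's formula to $v=\varpi/2$, where $s=1$ and $s'=0$, and obtains $\cosl=s'/(1+s^2)$ in one line. You instead identify $g=s'/(1+s^2)$ with $\cosl$ by ODE uniqueness. Your route is independent of the addition theorem. The paper's route is shorter once Euler's formula is granted.

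Two small repairs are needed.

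\emph{Deriving the second-order equation.} You cannot obtain $g''=-2g^3$ by differentiating $g'^2=1-g^4$ and dividing by $g'$, because $g'$ vanishes at the initial point $t=0$. The first-order equation alone does not give uniqueness either, since $g\equiv1$ also solves it with $g(0)=1$. Compute directly instead: $g'=-2s/(1+s^2)$, hence $g''=-2s'(1-s^2)/(1+s^2)^2=-2g^3$. Alternatively, note that $g'$ vanishes only on the discrete set $\varpi\Z$ and argue by continuity. The same caveat applies to your function $F$ in the ODE route to Euler's formula, which is one more reason to use Euler's argument.

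\emph{Complex-analytic fallback.} The constant cannot be fixed at $u=v$, because both sides of \eqref{addition} vanish there. Use $u=0$ instead, where both sides equal $-s^2(v)$.
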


\begin{proof}
(i)--(ii). Since \eqref{ODE2} is invariant under $s\mapsto -s$ and $t\mapsto -t$, uniqueness of solutions shows that $s$ is odd. By \eqref{ODE1}, $s$ increases on the maximal interval $[0,t_0]$ where $s'>0$, and its inverse function there is $t=\int_0^s du/\sqrt{1-u^4}$; hence $s(t_0)=1$, $s'(t_0)=0$ and $t_0=\varpi/2$. The functions $s(t)$ and $s(\varpi-t)$ solve \eqref{ODE2} with the same data at $t=\varpi/2$, hence they coincide. Combining this with the oddness, $s(t+\varpi)=s(\varpi-(-t))=s(-t)=-s(t)$. Now (ii) follows from the monotonicity of $s$ on $[0,\varpi/2]$ and the symmetries in (i).

(iii) The function $\cosl$ is even by (i), and the stated equation and initial values follow from \eqref{ODE2} and (ii). Expanding the solutions of $y''=-2y^3$ in power series, we obtain \eqref{Taylor}: in the first case $y=t+at^5+\dots$ gives $20a=-2$, and in the second case $y''(0)=-2$.

(iv) We use Euler's addition theorem for the lemniscatic sine \cite{Markush,LangerSinger},
$$
s(u+v)=\frac{s(u)s'(v)+s(v)s'(u)}{1+s^2(u)s^2(v)}.
$$
Replacing $v$ by $-v$ and multiplying, we obtain, by \eqref{ODE1},
$$
s(u+v)s(u-v)=\frac{s^2(u)(1-s^4(v))-s^2(v)(1-s^4(u))}{(1+s^2(u)s^2(v))^2}
=\frac{(s^2(u)-s^2(v))(1+s^2(u)s^2(v))}{(1+s^2(u)s^2(v))^2},
$$
which is \eqref{addition}. An equivalent form of \eqref{addition} can be found in \cite[(2.4.16)]{Lawden}, see also Remark~\ref{rem:wp}.

(v) By \eqref{ODE2} and \eqref{ODE1}, $ss''-s'^2=-2s^4-(1-s^4)=-(1+s^4)$.

(vi) Euler's addition theorem with $v=\varpi/2$ gives $\cosl(-u)=s(u+\frac{\varpi}2)=s'(u)/(1+s^2(u))$, because $s(\frac{\varpi}2)=1$ and $s'(\frac{\varpi}2)=0$. Hence, by \eqref{ODE1}, $\cosl^2=(1-s^4)/(1+s^2)^2=(1-s^2)/(1+s^2)$, which is equivalent to the second identity \cite[p.~53]{Markush}.
\end{proof}

\begin{remark}\label{rem:wp}
The function $s$ is the restriction to the real line of a doubly periodic meromorphic function satisfying $s(\mathrm{i}z)=\mathrm{i}\,s(z)$. Its period lattice is generated by $(1+\mathrm{i})\varpi$ and $(1-\mathrm{i})\varpi$. It is the Jacobi elliptic function $\sn(z,k)$ with the imaginary modulus $k=\mathrm{i}$; with the real modulus $k=1/\sqrt2$ one has $s(t)=\sn(\sqrt2t,\frac1{\sqrt2})/(\sqrt2\,\dn(\sqrt2 t,\frac1{\sqrt2}))$. Finally, $1/s^2$ is the Weierstrass function $\wp$ with the invariants $g_2=4$, $g_3=0$, that is, $\wp'^2=4\wp^3-4\wp$; indeed, if $w=s^{-2}$ then $w'^2=4s'^2s^{-6}=4(w^3-w)$ by \eqref{ODE1}.
\end{remark}

It will be convenient to work with the normalized functions
\begin{equation}\label{norm}
\sigma(t)=s(\varpi t),\qquad \gamma(t)=\cosl(\varpi t)=\sigma(t+\tfrac12).
\end{equation}
By Lemma~\ref{lem:sl}, $\sigma$ is odd, $\sigma(t+1)=-\sigma(t)$, $\sigma$ vanishes exactly on $\Z$, $\sigma$ has the sign $(-1)^k$ on $(k,k+1)$, $|\sigma(t)|=1$ if and only if $t\in\frac12+\Z$, and
\begin{equation}\label{TaylorN}
\begin{aligned}
\sigma(k+t)&=(-1)^k\bigl(\varpi t+O(t^5)\bigr),\\
\sigma(\tfrac12+k+t)&=(-1)^k\gamma(t)=(-1)^k\bigl(1-\varpi^2t^2+O(t^4)\bigr)
\end{aligned}
\end{equation}
for $k\in\Z$ and $t\to 0$. By Lemma~\ref{lem:sl}(vi),
\begin{equation}\label{sigmagamma}
\sigma^2(t)=\frac{1-\gamma^2(t)}{1+\gamma^2(t)},\qquad t\in\R{}.
\end{equation}
The addition theorem \eqref{addition} holds for $\sigma$ in the same form:
\begin{equation}\label{additionN}
\sigma(u+v)\,\sigma(u-v)=\frac{\sigma^2(u)-\sigma^2(v)}{1+\sigma^2(u)\sigma^2(v)},\qquad u,v\in\R{}.
\end{equation}

\section{Minimality and the local structure: proof of Theorem~\ref{thA}}\label{sec:thA}

\subsection{The mean curvature of a level set}
Let $u$ be a smooth function on an open set $\Omega\subset\R{n}$ and let $Du\ne0$ on $\Omega$. Then $\{u=0\}\cap\Omega$ is a smooth hypersurface with unit normal $\nu=Du/|Du|$, and its mean curvature (the trace of the second fundamental form with respect to $-\nu$) is
\begin{equation}\label{meancurv}
H=\operatorname{div}\frac{Du}{|Du|}=\frac{\Delta_1u}{|Du|^3},\qquad\text{where}\qquad
\Delta_1u:=|Du|^2\Delta u-\frac12\scal{Du}{D|Du|^2}
\end{equation}
(see, for example, \cite{Hsiang67} or \cite[Theorem~2.1]{Perdomo3}). Hence the level set $\{u=0\}$ is minimal (on the set where $Du\neq0$) as soon as $\Delta_1u$ vanishes on it.

Since homotheties preserve minimality, it suffices to study the hypersurface $\varpi\M=\{S_0=0\}$, where
$$
S_0(y)=S(y/\varpi)=s(y_1)s(y_3)-s(y_2)s(y_4),\qquad s=\sinl .
$$
We shall use the abbreviations $s_i=s(y_i)$ and
\begin{equation}\label{abbr}
a=s_1s_3,\qquad b=s_2s_4,\qquad p_1=s_1^2+s_3^2,\qquad p_2=s_2^2+s_4^2,
\end{equation}
so that $S_0=a-b$.

\begin{lemma}\label{lem:key}
In the notation \eqref{abbr},
\begin{equation}\label{grad}
|DS_0|^2=(1-a^2)p_1+(1-b^2)p_2,
\end{equation}
and
\begin{equation}\label{keyidentity}
\Delta_1S_0=-2S_0\Bigl(1-(a^4+a^3b+a^2b^2+ab^3+b^4)+p_1p_2(1+ab)\Bigr).
\end{equation}
In particular, $\Delta_1S_0$ vanishes identically on $\{S_0=0\}$.
\end{lemma}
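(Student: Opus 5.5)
Direct computation, using only the ODEs \eqref{ODE2} and \eqref{ODE1} for $s=\sinl$. Write $s_i'=s'(y_i)$, so that $s_i'^2=1-s_i^4$ and $s''(y_i)=-2s_i^3$. The gradient is
$$
DS_0=(s_1's_3,\,-s_2's_4,\,s_1s_3',\,-s_2s_4').
$$
Hence
$$
|DS_0|^2=(1-s_1^4)s_3^2+(1-s_3^4)s_1^2+(\text{same for }2,4)=p_1-a^2p_1+p_2-b^2p_2,
$$
which is \eqref{grad}, since $s_1^4s_3^2+s_3^4s_1^2=a^2p_1$. The Laplacian is
$$
\Delta S_0=-2s_1s_3(s_1^2+s_3^2)+2s_2s_4(s_2^2+s_4^2)=-2(ap_1-bp_2).
$$

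For the term $\frac12\scal{DS_0}{D|DS_0|^2}$ I will express $|DS_0|^2$ as a function $G(a,b,p_1,p_2)$ and apply the chain rule. The derivatives of the building blocks are
$$
\partial_1 a=s_1's_3,\quad \partial_1p_1=2s_1s_1',\quad \partial_3a=s_1s_3',\quad \partial_3p_1=2s_3s_3',
$$
and analogously for $b,p_2$. The pairing with $DS_0$ then produces the quantities
$$
\sum_{i=1,3}(\partial_ia)^2=|D a|^2=(1-a^2)p_1=:\alpha,\qquad \scal{Da}{Dp_1}=2\bigl(s_1s_3(1-s_1^4)+s_1s_3(1-s_3^4)\bigr)=2a\,(2-(p_1^2-2a^2)),
$$
where I used $s_1^4+s_3^4=p_1^2-2a^2$. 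The corresponding expressions for $b$ carry opposite signs from $S_0=a-b$. Thus
$$
\tfrac12\scal{DS_0}{D|DS_0|^2}=\tfrac12\scal{Da}{DG}-\tfrac12\scal{Db}{DG},
$$
with $G=(1-a^2)p_1+(1-b^2)p_2$, $\partial_aG=-2ap_1$ and $\partial_{p_1}G=1-a^2$. Everything becomes a polynomial in $a,b,p_1,p_2$.

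Assembling $\Delta_1S_0=G\,\Delta S_0-\frac12\scal{DS_0}{DG}$ gives a polynomial $P(a,b,p_1,p_2)$. The claim is that $P=-2(a-b)\,R$ with $R$ the bracket in \eqref{keyidentity}. To verify this I would first substitute $b=a$ and check that $P$ vanishes identically in $p_1,p_2$. This is plausible from the symmetry of the construction: under $a\leftrightarrow b$, $p_1\leftrightarrow p_2$ the expression is antisymmetric. That gives divisibility by $a-b$ only after combining with the $p$-dependence, so the real check is the explicit division. I would then compare coefficients of the quotient with $R$, grouping by powers of $p_1,p_2$. The terms linear in $p_1$ and $p_2$ separately must cancel against each other to leave only $p_1p_2(1+ab)$ and the $p$-free part. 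This coefficient matching is the main obstacle: it is routine but error-prone, and the $p_1^2$, $p_2^2$ terms must cancel exactly. I would organize it by noting that $P$ has degree at most $1$ in each of $p_1,p_2$ after cancellation, and then check the four coefficients ($1$, $p_1$, $p_2$, $p_1p_2$) as polynomials in $a,b$.

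The final claim is immediate: on $\{S_0=0\}$ the factor $S_0$ kills $\Delta_1S_0$.
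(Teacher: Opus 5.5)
Your proposal is correct and is essentially the paper's proof. It uses the same building blocks, $|Da|^2=(1-a^2)p_1$, $\Delta a=-2ap_1$ and $\scal{Da}{Dp_1}=2a(2-p_1^2+2a^2)$, together with the orthogonality of the $(y_1,y_3)$- and $(y_2,y_4)$-gradients, to reach $\scal{DS_0}{D|DS_0|^2}=4a(1-a^2)(1+a^2-p_1^2)-4b(1-b^2)(1+b^2-p_2^2)$.

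The final algebra is easier than you expect. Once the $p_1^2$ and $p_2^2$ terms cancel, no terms linear in $p_1$ or $p_2$ remain, and $\frac12\Delta_1S_0=p_1p_2\bigl(b(1-a^2)-a(1-b^2)\bigr)+(b-b^5)-(a-a^5)$. Both pieces are visibly divisible by $b-a$, so no symmetry argument or polynomial division is needed.
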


\begin{proof}
Consider first the function $r(y_1,y_3)=s(y_1)s(y_3)$ of two variables, and let $p=s_1^2+s_3^2$. By \eqref{ODE1},
$$
|Dr|^2=s_1'^2s_3^2+s_1^2s_3'^2=(1-s_1^4)s_3^2+(1-s_3^4)s_1^2=(1-r^2)p,
$$
and by \eqref{ODE2},
$$
\Delta r=s_1''s_3+s_1s_3''=-2r p,\qquad
\scal{Dr}{Dp}=2s_1s_3(s_1'^2+s_3'^2)=2r(2-p^2+2r^2).
$$
Therefore
$$
\scal{Dr}{D\bigl((1-r^2)p\bigr)}=(1-r^2)\scal{Dr}{Dp}-2rp|Dr|^2=4r(1-r^2)(1+r^2-p^2).
$$
Now $a$ and $p_1$ depend only on $(y_1,y_3)$, while $b$ and $p_2$ depend only on $(y_2,y_4)$; hence the above formulas apply to $(a,p_1)$ and to $(b,p_2)$, and the corresponding gradients are orthogonal. This gives \eqref{grad} and
\begin{align*}
\Delta S_0&=-2ap_1+2bp_2,\\
\scal{DS_0}{D|DS_0|^2}&=4a(1-a^2)(1+a^2-p_1^2)-4b(1-b^2)(1+b^2-p_2^2).
\end{align*}
Substituting these expressions into the definition of $\Delta_1$, we find
\begin{align*}
\tfrac12\Delta_1S_0
&=\bigl((1-a^2)p_1+(1-b^2)p_2\bigr)(bp_2-ap_1)\\
&\qquad-a(1-a^2)(1+a^2-p_1^2)+b(1-b^2)(1+b^2-p_2^2)\\
&=p_1p_2\bigl(b(1-a^2)-a(1-b^2)\bigr)-a(1-a^4)+b(1-b^4),
\end{align*}
because the terms containing $p_1^2$ and $p_2^2$ cancel. Finally, $b(1-a^2)-a(1-b^2)=(b-a)(1+ab)$ and
$-a(1-a^4)+b(1-b^4)=(b-a)\bigl(1-(a^4+a^3b+a^2b^2+ab^3+b^4)\bigr)$, which proves \eqref{keyidentity}.
\end{proof}

\subsection{The singular set}

\begin{lemma}\label{lem:sing}
Let $x\in\M$. Then $DS(x)=0$ if and only if $x\in\Z^4\cup(h+D_4)$. Moreover, $\M$ does not contain any point of $h'+D_4$, and $|S|=2$ on $h'+D_4$.
\end{lemma}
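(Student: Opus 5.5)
Work in the rescaled form $S_0$ of Section~\ref{sec:thA} (or directly with $\sigma$) and use the first integral \eqref{ODE1} for $\sigma$, namely $\sigma'(t)^2=\varpi^2(1-\sigma^4(t))$. The gradient is
$$
DS=\bigl(\sigma'(x_1)\sigma(x_3),\,-\sigma(x_2)\sigma'(x_4),\,\sigma(x_1)\sigma'(x_3),\,-\sigma'(x_2)\sigma(x_4)\bigr).
$$
By \eqref{grad}, $|DS|^2=\varpi^2\bigl((1-a^2)p_1+(1-b^2)p_2\bigr)$ with $a=\sigma(x_1)\sigma(x_3)$, $b=\sigma(x_2)\sigma(x_4)$, $p_1=\sigma^2(x_1)+\sigma^2(x_3)$ and $p_2=\sigma^2(x_2)+\sigma^2(x_4)$. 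On $\M$ we have $a=b$ with $|a|\le1$, so $DS(x)=0$ exactly when both summands vanish. For each pair, $(1-a^2)p_1=0$ means either $p_1=0$, i.e.\ $x_1,x_3\in\Z$, or $a^2=1$, i.e.\ $|\sigma(x_1)|=|\sigma(x_3)|=1$, i.e.\ $x_1,x_3\in\frac12+\Z$ (using Lemma~\ref{lem:sl}(ii) in normalized form). The same dichotomy holds for the pair $(x_2,x_4)$.

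The next step combines the two pairs with the constraint $a=b$. If $x_1,x_3\in\Z$, then $a=0$, so $b=0$. This rules out $x_2,x_4\in\frac12+\Z$ (which would give $|b|=1$), forcing $x_2,x_4\in\Z$ and hence $x\in\Z^4$. Symmetrically, if $x_1,x_3\in\frac12+\Z$, then $|a|=1$, so $|b|=1$ and $x_2,x_4\in\frac12+\Z$. Then $x=h+m$ with $m\in\Z^4$, and the sign condition must be checked. By \eqref{TaylorN}, $\sigma(\frac12+k)=(-1)^k$, so
$$
a=(-1)^{m_1+m_3},\qquad b=(-1)^{m_2+m_4},
$$
and $a=b$ is equivalent to $m_1+m_2+m_3+m_4$ being even, i.e.\ $m\in D_4$.

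For the converse, at points of $\Z^4$ every factor $\sigma(x_i)$ vanishes, so $S=0$ and $DS=0$ (each gradient component contains a factor $\sigma(x_j)=0$). At points $h+m$ with $m\in D_4$ we get $S=0$ by the sign computation above, and $DS=0$ because $\sigma'=0$ on $\frac12+\Z$.

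Finally, for $x=h'+m$ with $m\in D_4$, every coordinate lies in $\frac12+\Z$. Writing $h'+m=h+m'$ with $m'=m-e_4$, the vector $m'$ has odd coordinate sum. The same computation then gives $a=-b$ with $|a|=1$, hence $|S|=|a-b|=2$, and in particular $h'+D_4$ is disjoint from $\M$. The whole argument is elementary; the only point needing care is the parity bookkeeping in the half-integer case, where one uses $\sigma(\frac12+k)=(-1)^k$ and $\sigma(t+1)=-\sigma(t)$ to match $D_4$ against its coset $h'-h+D_4$.
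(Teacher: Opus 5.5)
Your proof is correct and follows essentially the paper's route: both read off the critical points on $\M$ from \eqref{grad} with $a=b$, and then do the same parity bookkeeping with $\sigma(\frac12+k)=(-1)^k$ to separate $h+D_4$ from $h'+D_4=h+e_1+D_4$. The paper merges the two summands into $(1-a^2)(p_1+p_2)$ where you treat them separately, which is equivalent. One harmless slip: in your formula for $DS$ the second and fourth components are swapped (they should be $-\sigma'(x_2)\sigma(x_4)$ and $-\sigma(x_2)\sigma'(x_4)$), which does not affect the argument.
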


\begin{proof}
By \eqref{grad}, on $\varpi\M$ we have $a=b$, and hence
$$
|DS_0|^2=(1-a^2)(p_1+p_2).
$$
Thus, in the normalized variables, a point $x\in\M$ is a critical point of $S$ if and only if either
$$
\sigma(x_1)=\sigma(x_2)=\sigma(x_3)=\sigma(x_4)=0,\qquad\text{or}\qquad \sigma(x_1)^2\sigma(x_3)^2=1 .
$$
In the first case $x\in\Z^4$; conversely, every point of $\Z^4$ lies in $\M$.

In the second case $|\sigma(x_1)|=|\sigma(x_3)|=1$, because $|\sigma|\le 1$. Since $x\in\M$, also $|\sigma(x_2)\sigma(x_4)|=|\sigma(x_1)\sigma(x_3)|=1$, hence $|\sigma(x_i)|=1$ for all $i$. By Lemma~\ref{lem:sl}(ii) this means that $x=h+m$ with $m\in\Z^4$, and then $\sigma(x_i)=(-1)^{m_i}$. The condition $S(x)=0$ now reads
$$
(-1)^{m_1+m_3}=(-1)^{m_2+m_4},
$$
i.e. $m\in D_4$. If instead $m\in e_1+D_4$, then $S(h+m)=\pm2$. Since $h'=h-e_4\in h+e_1+D_4$, these are exactly the points of $h'+D_4$.
\end{proof}

From now on we write
\begin{align*}
\Sing&=\Z^4\cup(h+D_4),\\
\Mreg&=\M\setminus\Sing.
\end{align*}

\begin{proof}[Proof of Theorem~\ref{thA}\,\textup{(a)}, \textup{(b)}]
Since $\sigma$ is real analytic, $\M$ is a closed real analytic subset of $\R{4}$. If $n\in\Z^4$ then $\sigma(x_i+n_i)=(-1)^{n_i}\sigma(x_i)$, hence
\begin{equation}\label{transl}
S(x+n)=(-1)^{n_1+n_3}\sigma(x_1)\sigma(x_3)-(-1)^{n_2+n_4}\sigma(x_2)\sigma(x_4),
\end{equation}
and for $n\in D_4$ the two signs coincide, so that $S(x+n)=\pm S(x)$. Thus $\M$ is $D_4$-invariant, and (a) follows from Lemma~\ref{lem:sing}. By the implicit function theorem, $\Mreg$ is a real analytic embedded hypersurface in $\R{4}\setminus\Sing$; it is properly embedded because it is closed in $\R{4}\setminus\Sing$. Its minimality follows from \eqref{meancurv} and Lemma~\ref{lem:key}.
\end{proof}

\subsection{Local structure near the singular points}

\begin{lemma}\label{lem:expansions}
Let $y\in\Sing$ and $x\to0$.
\begin{enumerate}
\item[(i)] If $y\in\Z^4$, then
$S(y+x)=\varpi^2\bigl(\epsilon_{13}\,x_1x_3-\epsilon_{24}\,x_2x_4\bigr)+O(|x|^6)$, where $\epsilon_{13}=(-1)^{y_1+y_3}$ and $\epsilon_{24}=(-1)^{y_2+y_4}$.
\item[(ii)] If $y=h+m$, $m\in D_4$, then
$S(y+x)=\epsilon\,\varpi^2\bigl(x_2^2+x_4^2-x_1^2-x_3^2\bigr)+O(|x|^4)$, where $\epsilon=(-1)^{m_1+m_3}$.
\end{enumerate}
In both cases the quadratic part of $S$ at $y$ is a nondegenerate quadratic form of signature $(2,2)$ whose eigenvalues are $\pm\frac{\varpi^2}{2}$ \textup{(}case (i)\textup{)} or $\pm\varpi^2$ \textup{(}case (ii)\textup{)}, each of multiplicity two.
\end{lemma}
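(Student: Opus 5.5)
Both expansions follow by substituting the one-variable Taylor expansions \eqref{TaylorN} into $S(y+x)=\sigma(y_1+x_1)\sigma(y_3+x_3)-\sigma(y_2+x_2)\sigma(y_4+x_4)$ and multiplying them out factor by factor.

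\emph{Case (i).} Let $y\in\Z^4$. By \eqref{TaylorN},
$$
\sigma(y_i+x_i)=(-1)^{y_i}\bigl(\varpi x_i+O(x_i^5)\bigr).
$$
Hence
$$
\sigma(y_1+x_1)\sigma(y_3+x_3)=(-1)^{y_1+y_3}\bigl(\varpi^2x_1x_3+O(|x_1||x_3|^5+|x_1|^5|x_3|)\bigr),
$$
and the error term is $O(|x|^6)$. The same holds for the $(2,4)$ product, which gives (i). The quadratic form $\epsilon_{13}x_1x_3-\epsilon_{24}x_2x_4$ is an orthogonal sum of two hyperbolic planes. Each block $\pm x_ix_j$ has eigenvalues $\pm\frac12$, so after multiplying by $\varpi^2$ the eigenvalues are $\pm\varpi^2/2$, each of multiplicity two, and the signature is $(2,2)$.

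\emph{Case (ii).} Let $y=h+m$ with $m\in D_4$. By \eqref{TaylorN},
$$
\sigma(\tfrac12+m_i+x_i)=(-1)^{m_i}\bigl(1-\varpi^2x_i^2+O(x_i^4)\bigr).
$$
The product of the factors for $i=1,3$ is
$$
(-1)^{m_1+m_3}\bigl(1-\varpi^2(x_1^2+x_3^2)+O(|x|^4)\bigr),
$$
because the cross term $\varpi^4x_1^2x_3^2$ is $O(|x|^4)$. Similarly, the product for $i=2,4$ is $(-1)^{m_2+m_4}\bigl(1-\varpi^2(x_2^2+x_4^2)+O(|x|^4)\bigr)$. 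Since $m\in D_4$, the two signs agree; call the common sign $\epsilon$. Subtracting, the constants cancel and
$$
S(y+x)=\epsilon\varpi^2\bigl(x_2^2+x_4^2-x_1^2-x_3^2\bigr)+O(|x|^4).
$$
This form is diagonal with eigenvalues $\pm\varpi^2$, each of multiplicity two, so again the signature is $(2,2)$.

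There is no real obstacle here. The only points that need care are that the error terms are uniform in the stated orders, which holds since each factor is analytic, and the cancellation of the constant terms in case (ii), which uses the $D_4$ parity condition exactly as in Lemma~\ref{lem:sing}.
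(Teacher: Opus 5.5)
Your proof is correct and follows the same argument as the paper. You substitute the one-variable expansions \eqref{TaylorN} into each factor, use the $D_4$ parity condition $(-1)^{m_1+m_3}=(-1)^{m_2+m_4}$ to cancel the constants in case (ii), and read off the eigenvalues; the paper does the hyperbolic-plane step through $4x_1x_3=(x_1+x_3)^2-(x_1-x_3)^2$, and the cross term $O(x_1^5x_3^5)$ you left out in case (i) is trivially $O(|x|^6)$ as well.
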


\begin{proof}
Both expansions follow immediately from \eqref{TaylorN}; in case (ii) we also use $(-1)^{m_1+m_3}=(-1)^{m_2+m_4}$. The statement about the eigenvalues is obvious for (ii), and for (i) it follows from the identity
$4x_1x_3=(x_1+x_3)^2-(x_1-x_3)^2$ and the similar identity for $x_2x_4$.
\end{proof}

The quadratic forms of Lemma~\ref{lem:expansions} have the same eigenvalues up to a positive factor; hence their zero sets $C_y$ are all congruent to the Clifford cone \eqref{Clifford}. To pass from the quadratic part to the hypersurface itself we use the following real analytic version of the Morse lemma (cf. \cite[Lemma~2.2]{MilnorMorse}); we include the short proof because we need the normalization $d\Phi(0)=\Id$.

\begin{lemma}\label{lem:Morse}
Let $f$ be a real analytic function near $0\in\R{n}$ with $f(0)=0$, $Df(0)=0$, and nondegenerate Hessian, and let $q(x)=\frac12\scal{D^2f(0)x}{x}$. Then there is a real analytic diffeomorphism $\Psi$ between neighbourhoods of $0$ with $\Psi(0)=0$, $d\Psi(0)=\Id$ and $f(\Psi(z))=q(z)$.
\end{lemma}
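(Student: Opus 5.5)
The plan is the classical Morse lemma argument, made real analytic and normalized so that $d\Psi(0)=\Id$. Write $B=D^2f(0)$, a symmetric nondegenerate matrix, so that $q(x)=\frac12\scal{Bx}{x}$. Since $f(0)=0$ and $Df(0)=0$, Taylor's formula with integral remainder gives
$$
f(x)=\tfrac12\scal{H(x)x}{x},\qquad H(x)=2\int_0^1(1-t)\,D^2f(tx)\,dt .
$$
Here $H$ is a real analytic map into symmetric matrices on a ball around $0$, and $H(0)=B$. Analyticity is clear, because $D^2f$ extends holomorphically to a complex neighbourhood and the integral in $t$ preserves holomorphy.

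Next I look for a matrix-valued map $R(x)$, real analytic, with $R(0)=\Id$ and
$$
R(x)^{T}BR(x)=H(x).
$$
Put $C(x)=B^{-1}H(x)$, so $C(0)=\Id$. Define $R(x)=\sqrt{C(x)}$ by the binomial power series $\sqrt{\Id+E}=\sum_k\binom{1/2}{k}E^k$ with $E=C(x)-\Id$. This converges and is analytic for $\|E\|<1$, that is, for $x$ near $0$. Then $R$ is a power series in $C$. The identity $BC=H$ is symmetric, and $C^TB=(B^{-1}H)^TB=HB^{-1}B=H=BC$. From this, $p(C)^TB=Bp(C)$ for every polynomial $p$, and by passing to the limit also for the series, so $R^TB=BR$. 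Consequently
$$
R^TBR=BR^2=BC=H .
$$

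Now set $\Phi(x)=R(x)x$. This map is real analytic, $\Phi(0)=0$, and $d\Phi(0)=R(0)=\Id$. Moreover
$$
q(\Phi(x))=\tfrac12\scal{BRx}{Rx}=\tfrac12\scal{Hx}{x}=f(x).
$$
By the real analytic inverse function theorem, $\Phi$ is a real analytic diffeomorphism between neighbourhoods of $0$. Taking $\Psi=\Phi^{-1}$ gives $f(\Psi(z))=q(z)$ with $d\Psi(0)=\Id$.

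The only delicate step is arranging $R(0)=\Id$ while keeping $R$ analytic. The Gram–Schmidt style diagonalization used in Milnor's proof would only give $d\Psi(0)$ equal to some invertible matrix. The square root $\sqrt{B^{-1}H}$, combined with the $B$-self-adjointness check, avoids this. Everything else is routine.
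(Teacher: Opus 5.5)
Your proof is correct and is essentially the paper's argument: the integral-remainder (Hadamard) representation $f(x)=\frac12\scal{H(x)x}{x}$, the analytic square root of $B^{-1}H(x)$ via the binomial series, the self-adjointness identity $R^{T}B=BR$ giving $R^{T}BR=H$, and then inverting $x\mapsto R(x)x$. The only difference is your factor $2$ in the definition of $H$ (so that $H(0)=D^2f(0)$ rather than $\frac12D^2f(0)$), which is purely cosmetic.
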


\begin{proof}
We use a matrix-valued Hadamard argument to obtain the analytic Morse normal form.
By Taylor's formula with integral remainder, $f(x)=\scal{H(x)x}{x}$, where
$$
H(x)=\int_0^1(1-t)D^2f(tx)\,dt
$$
is a real analytic family of symmetric matrices with $H(0)=H_0:=\frac12D^2f(0)$. Put $C(x)=H_0^{-1}H(x)$; then $C(0)=I$, and $C(x)$ is self-adjoint with respect to the nondegenerate symmetric bilinear form $\scal{H_0\,\cdot}{\cdot}$. Let $B(x)=C(x)^{1/2}$ be defined, for small $x$, by the binomial series in $C(x)-I$. Then $B$ is real analytic, $B(0)=I$, $B(x)$ commutes with $C(x)$ and is $H_0$-self-adjoint, that is, $H_0B=B^{T}H_0$. Hence
$$
B^TH_0B=H_0B^2=H_0C=H.
$$
The map $z=\chi(x):=B(x)x$ satisfies $d\chi(0)=I$ and
$$
q(\chi(x))=\scal{H_0B(x)x}{B(x)x}=\scal{H(x)x}{x}=f(x).
$$
Therefore $\Psi=\chi^{-1}$ has the required properties.
\end{proof}

\begin{proof}[Proof of Theorem~\ref{thA}\,\textup{(c)}]
Let $y\in\Sing$ and $f(x)=S(y+x)$. By Lemma~\ref{lem:expansions}, $f(0)=0$, $Df(0)=0$, and the quadratic part of $f$ is the nondegenerate form
$$
q_y(x)=
\begin{cases}
\varpi^2\bigl(\epsilon_{13}\,x_1x_3-\epsilon_{24}\,x_2x_4\bigr), & y\in\Z^4,\\[2pt]
\epsilon\,\varpi^2\bigl(x_2^2+x_4^2-x_1^2-x_3^2\bigr), & y=h+m\in h+D_4 ,
\end{cases}
$$
with the signs $\epsilon_{13},\epsilon_{24},\epsilon$ of Lemma~\ref{lem:expansions}. In the first case $\{q_y=0\}=\{x_1x_3=\epsilon_{13}\epsilon_{24}\,x_2x_4\}$ and $\epsilon_{13}\epsilon_{24}=(-1)^{y_1+y_2+y_3+y_4}$; in the second case the sign $\epsilon$ does not affect the zero set. Thus, in both cases,
$$
C_y=\{x\in\R{4}:\ q_y(x)=0\}
$$
is exactly the cone in the statement of Theorem~\ref{thA}(c).

By Lemma~\ref{lem:Morse}, there is a real analytic diffeomorphism $\Psi$ between neighbourhoods $U$ and $\Psi(U)$ of $0$, with $\Psi(0)=0$, $d\Psi(0)=\Id$ and $f\circ\Psi=q_y$. Put $\Phi(z)=y+\Psi(z)$. Then $S(\Phi(z))=q_y(z)$ for $z\in U$, hence
$$
\M\cap\Phi(U)=\Phi(C_y\cap U),\qquad \Phi(0)=y,\qquad d\Phi(0)=\Id .
$$

Since $C_y$ is congruent to the cone over a $2$-torus, every small punctured neighbourhood of the vertex in $C_y$ is homeomorphic to $T^2\times(0,1)$. Hence
$$
H_2(C_y,C_y\setminus\{0\})\cong \tilde H_1(T^2)\cong \Z^2\neq0,
$$
while for a topological $3$-manifold this local homology group vanishes. Thus $\M$ is not a topological manifold near $y$.
\end{proof}

\subsection{$\M$ as a stationary varifold}
We use the standard terminology of geometric measure theory \cite{Allard,SimonGMT}. For an integral varifold $V$, a point $y$ and $r>0$, let $V_{y,r}$ denote the image of $V$ under the homothety $x\mapsto (x-y)/r$; a \emph{tangent cone} of $V$ at $y$ is a varifold limit of $V_{y,r_j}$ along a sequence $r_j\to0$.

\begin{proof}[Proof of Theorem~\ref{thA}\,\textup{(d)}]
We proceed in four steps.

\emph{Step 1: rescaled charts.} Let $y\in\Sing$, and let $\Phi=y+\Psi$ be as in (c) above, defined on a ball $B_\rho\subset U$. For $0<r<\rho/3$ consider the rescaled maps
$$
\Phi_r(z)=\frac{\Phi(rz)-y}{r}=\frac{\Psi(rz)}{r},\qquad z\in B_3 .
$$
Since $\Psi$ is real analytic with $\Psi(0)=0$ and $d\Psi(0)=\Id$, we have
$$
\Phi_r(z)=z+O(r|z|^2),\qquad d\Phi_r(z)=d\Psi(rz)=\Id+O(r|z|),
$$
uniformly for $z\in B_3$. In particular, $\Phi_r\to\Id$ in $C^1(B_3)$ as $r\to0$, and $\Phi_r(B_3)\supset B_2$ for all small $r$. Since $C_y$ is a cone, the conclusion of (c) rescales to
\begin{equation}\label{rescaledchart}
\frac{\M-y}{r}\cap\Phi_r(B_3)=\Phi_r(C_y\cap B_3).
\end{equation}

\emph{Step 2: area bounds.} For a nonnegative continuous function $g$ with support in $B_2$, the area formula and \eqref{rescaledchart} give
\begin{equation}\label{areaformula}
\int_{(\M-y)/r}g\,d\mathcal{H}^3=\int_{C_y\cap B_3}g(\Phi_r(z))\,J_r(z)\,d\mathcal{H}^3(z),
\end{equation}
where $J_r(z)$ is the Jacobian of $d\Phi_r(z)$ restricted to the tangent plane of $C_y$ at $z$ (defined for $z\neq0$). By Step 1, $J_r\to1$ uniformly on $C_y\cap B_3$. Taking $g\le 1$ with $g=1$ on $B_1$, we obtain
\begin{equation}\label{areabound}
\mathcal{H}^3(\M\cap B_r(y))\le c_y r^3,\qquad 0<r<r_y ,
\end{equation}
for suitable constants $c_y$ and $r_y>0$. Since $\Sing$ is discrete, $\M$ is $D_4$-periodic and $\Mreg$ is a properly embedded smooth hypersurface, it follows that $\M$ has locally finite $\mathcal{H}^3$-measure. Hence $\M$ defines an integral varifold $V$ with multiplicity one.

\emph{Step 3: stationarity.} Let $X$ be a smooth compactly supported vector field on $\R{4}$. Only finitely many singular points $y^{(1)},\dots,y^{(k)}$ lie in the support of $X$. For small $\varepsilon>0$ choose a smooth function $\eta_\varepsilon$ which equals $1$ outside $\bigcup_jB_{2\varepsilon}(y^{(j)})$, vanishes on $\bigcup_jB_{\varepsilon}(y^{(j)})$, and satisfies $|D\eta_\varepsilon|\le 2/\varepsilon$. Since $\Mreg$ is minimal and $\eta_\varepsilon X$ has compact support in $\R{4}\setminus\Sing$, the first variation formula gives
$$
\int_{\M}\operatorname{div}_{\M}(\eta_\varepsilon X)\,d\mathcal{H}^3=0 .
$$
Expanding the tangential divergence and using \eqref{areabound}, we obtain
$$
\Bigl|\int_{\M}\eta_\varepsilon\operatorname{div}_{\M}X\,d\mathcal{H}^3\Bigr|\le\int_{\M}|D\eta_\varepsilon||X|\,d\mathcal{H}^3\le \frac{2\sup|X|}{\varepsilon}\sum_{j=1}^k c_{y^{(j)}}(2\varepsilon)^3\longrightarrow0 .
$$
Letting $\varepsilon\to0$, we obtain $\int_{\M}\operatorname{div}_{\M}X\,d\mathcal{H}^3=0$, i.e. $V$ is stationary.

\emph{Step 4: tangent cone and density.} Let $g$ be a continuous function on the Grassmann bundle $\R{4}\times G(3,4)$ with compact support in $B_2\times G(3,4)$. Applying the area formula as in \eqref{areaformula}, and using that $d\Phi_r$ maps the tangent plane $T_zC_y$ onto the tangent plane of $(\M-y)/r$ at $\Phi_r(z)$, we get
$$
V_{y,r}(g)=\int_{C_y\cap B_3}g\bigl(\Phi_r(z),\,d\Phi_r(z)\,T_zC_y\bigr)\,J_r(z)\,d\mathcal{H}^3(z).
$$
By Step 1, $\Phi_r\to\Id$ and $d\Phi_r\to\Id$ uniformly, and $J_r\to1$ uniformly.
Thus, after pulling the integral back to $C_y$, the integrand converges pointwise and is uniformly bounded on the compact support of $g$.
By dominated convergence, $V_{y,r}(g)$ converges to
$$
\int_{C_y}g(z,T_zC_y)\,d\mathcal{H}^3(z)
$$
as $r\to0$, along \emph{all} $r\to0$, not only along subsequences. The restriction to test functions supported in $B_2\times G(3,4)$ is harmless. Indeed, if $g$ has compact support in $B_{2R}\times G(3,4)$ for some $R\ge1$, put $g_R(z,P)=g(Rz,P)$; then $V_{y,r}(g)=R^3\,V_{y,rR}(g_R)$, and the above applied to $g_R$, together with the homogeneity of $C_y$, gives
$$
V_{y,r}(g)\longrightarrow R^3\int_{C_y}g_R(z,T_zC_y)\,d\mathcal{H}^3(z)=\int_{C_y}g(z,T_zC_y)\,d\mathcal{H}^3(z).
$$
Hence $V_{y,r}$ converges, as varifolds, to the multiplicity-one varifold of the cone $C_y$, and $C_y$ is the unique tangent cone of $V$ at $y$. Finally, since $\mathcal{H}^3(C_y\cap\partial B_1)=0$, the masses converge as well:
$$
\Theta(V,y)=\lim_{r\to0}\frac{\mathcal{H}^3(\M\cap B_r(y))}{\mathcal{H}^3(B^3_1)\,r^3}=\frac{\mathcal{H}^3(C\cap B_1)}{\mathcal{H}^3(B^3_1)} .
$$
The cone $C$ is the cone over the Clifford torus of area $(2\pi/\sqrt2)^2=2\pi^2$, so $\mathcal{H}^3(C\cap B_1)=\frac{2\pi^2}{3}$, and therefore
$$
\Theta(V,y)=\frac{2\pi^2/3}{4\pi/3}=\frac{\pi}{2}.
$$
This completes the proof of Theorem~\ref{thA}.
\end{proof}

\section{The lattice $D_4$ and the $24$-cell honeycomb}\label{sec:lattices}

In this section we recall some standard facts on four-dimensional lattices; see \cite[Chapters~4 and 21]{Conway} and \cite{ConwaySmith}.
The \emph{checkerboard lattice} $D_4$ is the sublattice of index $2$ in $\Z^4$ formed by the integer vectors with even coordinate sum. Its shortest nonzero vectors are the $24$ vectors $\pm e_i\pm e_j$, $i<j$, which form the root system $D_4$. The dual lattice is
$$
D_4^*=\{x\in\R{4}:\ \scal{x}{m}\in\Z\ \text{for all}\ m\in D_4\}=\Z^4\cup(h+\Z^4),
$$
and $D_4^*/D_4\cong\Z_2\times\Z_2$. The four cosets of $D_4$ in $D_4^*$ are
\begin{equation}\label{cosets}
D_4,\qquad e_1+D_4,\qquad h+D_4,\qquad h'+D_4 .
\end{equation}
In this notation $\Z^4=D_4\sqcup(e_1+D_4)$ and, by Theorem~\ref{thA},
$$
\Sing=D_4\sqcup(e_1+D_4)\sqcup(h+D_4)=D_4^*\setminus(h'+D_4).
$$
Thus the singular set of $\M$ is obtained from the lattice $D_4^*$ by removing one of the three nontrivial cosets of $D_4$, namely the set of holes $h'+D_4$.

\begin{remark}\label{rem:Hurwitz}
If we identify $\R{4}$ with the quaternions $\Ham$ via $x\mapsto x_1+x_2\mathrm{i}+x_3\mathrm{j}+x_4\mathrm{k}$, then $\Z^4$ becomes the ring of Lipschitz quaternions, $D_4^*=\Z^4\cup(h+\Z^4)$ becomes the ring of Hurwitz integral quaternions $\mathcal{O}$, and $D_4$ becomes the two-sided ideal $(1+\mathrm{i})\mathcal{O}$. The quotient $\mathcal{O}/(1+\mathrm{i})\mathcal{O}$ is the field with four elements $\{0,1,\omega,\omega^2\}$, $\omega=\frac12(-1+\mathrm{i}+\mathrm{j}+\mathrm{k})$, and in this language
$$
\Sing=\mathcal{O}\setminus\bigl(\omega+(1+\mathrm{i})\mathcal{O}\bigr).
$$
The norm one elements of $\mathcal{O}$ are the $24$ Hurwitz units
$$
\pm1,\pm\mathrm{i},\pm\mathrm{j},\pm\mathrm{k}, \qquad
\tfrac12(\pm1\pm\mathrm{i}\pm\mathrm{j}\pm\mathrm{k});
$$
they are the vertices of a regular $24$-cell (Figure~\ref{fig:24cell}). The translate $\frac12(1+\mathrm{j})+\mathcal{O}$ of the Hurwitz order will appear in Theorem~\ref{thm:theta}.
\end{remark}

The reflections in the roots $\pm e_i\pm e_j$ generate the Weyl group $W(D_4)$; it consists of the permutations of coordinates combined with an even number of sign changes, so that
$$
|W(D_4)|=2^3\cdot4!=192 .
$$
Every element of $W(D_4)$ preserves $D_4$ and maps each of the four cosets \eqref{cosets} onto itself. The full automorphism group $\Aut(D_4)$ of $D_4$, i.e. the group of orthogonal transformations $L$ with $L(D_4)=D_4$, is larger. Since an orthogonal map preserves a lattice if and only if it preserves its dual, $\Aut(D_4)=\Aut(D_4^*)$, and therefore $\Aut(D_4)$ permutes the three nontrivial cosets in \eqref{cosets}. Every permutation occurs, and the kernel of this action is exactly $W(D_4)$ (this is the triality of $D_4$). Consequently, $\Aut(D_4)$ is the Weyl group $W(F_4)$, of order
$$
|W(F_4)|=6\,|W(D_4)|=2^7\cdot3^2=1152 .
$$
For example, the reflection
\begin{equation}\label{R4}
R(x_1,x_2,x_3,x_4)=(x_1,x_2,x_3,-x_4)
\end{equation}
fixes $e_1+D_4$ and interchanges $h+D_4$ and $h'+D_4$. The symmetric orthogonal matrix $A$ in \eqref{Amatrix} satisfies $A^2=I$ and maps the generators $e_1-e_2$, $e_2-e_3$, $e_3-e_4$, $e_3+e_4$ of $D_4$ to $(0,0,1,1)$, $(0,1,-1,0)$, $(0,0,1,-1)$, $(1,-1,0,0)$, respectively; hence $A\in\Aut(D_4)$. Moreover, $A$ fixes $h'$ and interchanges $h$ and $e_1$; thus it interchanges $e_1+D_4$ and $h+D_4$.

\medskip
\noindent\textit{The $24$-cell honeycomb.} The Voronoi cell of the lattice $D_4$ at the origin, i.e. the set of points of $\R{4}$ which are at least as close to $0$ as to any other point of $D_4$, is the regular $24$-cell whose vertices are the $24$ points of $D_4^*\setminus D_4$ of norm one: the $8$ points $\pm e_i$ and the $16$ points $\frac12(\pm1,\pm1,\pm1,\pm1)$ (these are the \emph{deep holes} of $D_4$) \cite[Chapters~4 and 21]{Conway}. The translates of this cell by $D_4$ form the regular honeycomb of $\R{4}$ by $24$-cells \cite{Coxeter}, whose vertex set is $D_4^*\setminus D_4$. Translating everything by $h'$, we obtain:

\begin{lemma}\label{lem:honeycomb}
$\Sing$ is the vertex set of the $24$-cell honeycomb formed by the Voronoi cells of the points of $h'+D_4$. In particular, every hole $c\in h'+D_4$ is surrounded by exactly $24$ singular points of $\M$ at distance $1$ from $c$, the vertices of a regular $24$-cell centred at $c$.
\end{lemma}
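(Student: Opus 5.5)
The plan is to reduce the statement to the standard description of the $D_4$ honeycomb recalled just before it, using a translation by $h'$. I rely on two standard facts. First, the Voronoi cell of $D_4$ at $0$ is the regular $24$-cell with vertices the $24$ norm-one points of $D_4^*\setminus D_4$. Second, the $D_4$-translates of this cell tile $\R{4}$ as the regular $24$-cell honeycomb, with vertex set $D_4^*\setminus D_4$. The translation $\tau(x)=x+h'$ maps Voronoi cells of $D_4$ onto Voronoi cells of $h'+D_4$, since Voronoi cells are defined metrically. So the honeycomb of Voronoi cells of the holes has vertex set $h'+(D_4^*\setminus D_4)$.

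The one computation is that $h'+(D_4^*\setminus D_4)=D_4^*\setminus(h'+D_4)$.

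\begin{itemize}
\item Since $h'\in D_4^*$, translation by $h'$ permutes the cosets \eqref{cosets} of $D_4$ in $D_4^*$. It maps $D_4$ to $h'+D_4$.
\item Hence it maps the complement $D_4^*\setminus D_4$ onto $D_4^*\setminus(h'+D_4)$.
\item To check this concretely: $2h'\in D_4$, because $2h'=(1,1,1,-1)$ has even coordinate sum. Therefore $h'+(h'+D_4)=D_4$.
\item Also $h'+(e_1+D_4)=h+D_4$ and $h'+(h+D_4)=e_1+D_4$. The first holds because $h'+e_1-h=e_1-e_4\in D_4$; the second follows by the same kind of parity check.
\end{itemize}

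The identification of $\Sing$ in \S\ref{sec:lattices} gives $\Sing=D_4\sqcup(e_1+D_4)\sqcup(h+D_4)=D_4^*\setminus(h'+D_4)$. This proves the first assertion.

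For the second assertion, fix a hole $c\in h'+D_4$. By $D_4$-invariance of $\Sing$ (Theorem~\ref{thA}(a)), I may translate and assume $c=h'$. The cell centred at $h'$ is $h'+V$, where $V$ is the $24$-cell with vertices $\pm e_i$ and $\frac12(\pm1,\pm1,\pm1,\pm1)$. These $24$ vertices have norm $1$. So the points $h'+v$ are $24$ singular points at distance $1$ from $h'$, and they form a regular $24$-cell centred at $h'$.

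I must also check that no other singular point is at distance $1$ from $h'$. Any singular point is $h'+w$ with $w\in D_4^*\setminus D_4$. The norm-one elements of $D_4^*$ are exactly the $24$ Hurwitz units listed in Remark~\ref{rem:Hurwitz}, and $D_4$ contains no vector of norm $1$ since its minimal norm is $\sqrt2$. So $w$ must be one of these $24$ vectors. This norm count is the only point needing care; it is a direct enumeration of integer and half-integer vectors of norm $1$. The rest is bookkeeping with cosets.
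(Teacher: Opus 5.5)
Your proposal is correct and follows essentially the same route as the paper: translate the standard Voronoi honeycomb of $D_4$ by $h'$, use $h'+(D_4^*\setminus D_4)=D_4^*\setminus(h'+D_4)$, identify this set with the singular set, and read off the $24$-cell around a hole. Your extra step, the count of norm-one vectors of $D_4^*$ showing that no other singular point lies at distance $1$ from a hole, makes explicit the word ``exactly'', which the paper's one-line proof leaves implicit.
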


\begin{proof}
The vertex set of the honeycomb formed by the Voronoi cells of $h'+D_4$ is
$$
h'+(D_4^*\setminus D_4)=D_4^*\setminus(h'+D_4),
$$
which equals $\Sing$ by the discussion above. The second statement follows because the Voronoi cell of $c$ is the translate by $c$ of the $24$-cell with vertices $\pm e_i$, $\frac12(\pm1,\pm1,\pm1,\pm1)$.
\end{proof}

The symmetry group of the honeycomb, i.e. the group of all isometries preserving its vertex set $\Sing$, will be described in Lemma~\ref{lem:Gsigma}.

\begin{figure}[ht]
\centering

\begin{tikzpicture}[scale=1]
\draw[gray!60,line width=0.3pt] (0.000,3.000)--(-1.500,2.598);
\draw[gray!60,line width=0.3pt] (0.000,3.000)--(-2.598,1.500);
\draw[gray!60,line width=0.3pt] (0.000,3.000)--(-0.402,1.500);
\draw[gray!60,line width=0.3pt] (0.000,3.000)--(-1.500,0.402);
\draw[gray!60,line width=0.3pt] (0.000,3.000)--(1.500,2.598);
\draw[gray!60,line width=0.3pt] (0.000,3.000)--(0.402,1.500);
\draw[gray!60,line width=0.3pt] (0.000,3.000)--(2.598,1.500);
\draw[gray!60,line width=0.3pt] (0.000,3.000)--(1.500,0.402);
\draw[gray!60,line width=0.3pt] (-0.000,-3.000)--(-1.500,-0.402);
\draw[gray!60,line width=0.3pt] (-0.000,-3.000)--(-2.598,-1.500);
\draw[gray!60,line width=0.3pt] (-0.000,-3.000)--(-0.402,-1.500);
\draw[gray!60,line width=0.3pt] (-0.000,-3.000)--(-1.500,-2.598);
\draw[gray!60,line width=0.3pt] (-0.000,-3.000)--(1.500,-0.402);
\draw[gray!60,line width=0.3pt] (-0.000,-3.000)--(0.402,-1.500);
\draw[gray!60,line width=0.3pt] (-0.000,-3.000)--(2.598,-1.500);
\draw[gray!60,line width=0.3pt] (-0.000,-3.000)--(1.500,-2.598);
\draw[gray!60,line width=0.3pt] (-3.000,0.000)--(-1.500,2.598);
\draw[gray!60,line width=0.3pt] (-3.000,0.000)--(-2.598,1.500);
\draw[gray!60,line width=0.3pt] (-3.000,0.000)--(-0.402,1.500);
\draw[gray!60,line width=0.3pt] (-3.000,0.000)--(-1.500,0.402);
\draw[gray!60,line width=0.3pt] (-3.000,0.000)--(-1.500,-0.402);
\draw[gray!60,line width=0.3pt] (-3.000,0.000)--(-2.598,-1.500);
\draw[gray!60,line width=0.3pt] (-3.000,0.000)--(-0.402,-1.500);
\draw[gray!60,line width=0.3pt] (-3.000,0.000)--(-1.500,-2.598);
\draw[gray!60,line width=0.3pt] (3.000,-0.000)--(1.500,2.598);
\draw[gray!60,line width=0.3pt] (3.000,-0.000)--(0.402,1.500);
\draw[gray!60,line width=0.3pt] (3.000,-0.000)--(2.598,1.500);
\draw[gray!60,line width=0.3pt] (3.000,-0.000)--(1.500,0.402);
\draw[gray!60,line width=0.3pt] (3.000,-0.000)--(1.500,-0.402);
\draw[gray!60,line width=0.3pt] (3.000,-0.000)--(0.402,-1.500);
\draw[gray!60,line width=0.3pt] (3.000,-0.000)--(2.598,-1.500);
\draw[gray!60,line width=0.3pt] (3.000,-0.000)--(1.500,-2.598);
\draw[gray!60,line width=0.3pt] (-1.098,1.098)--(-1.500,2.598);
\draw[gray!60,line width=0.3pt] (-1.098,1.098)--(-2.598,1.500);
\draw[gray!60,line width=0.3pt] (-1.098,1.098)--(1.500,2.598);
\draw[gray!60,line width=0.3pt] (-1.098,1.098)--(0.402,1.500);
\draw[gray!60,line width=0.3pt] (-1.098,1.098)--(-1.500,-0.402);
\draw[gray!60,line width=0.3pt] (-1.098,1.098)--(-2.598,-1.500);
\draw[gray!60,line width=0.3pt] (-1.098,1.098)--(1.500,-0.402);
\draw[gray!60,line width=0.3pt] (-1.098,1.098)--(0.402,-1.500);
\draw[gray!60,line width=0.3pt] (1.098,-1.098)--(-0.402,1.500);
\draw[gray!60,line width=0.3pt] (1.098,-1.098)--(-1.500,0.402);
\draw[gray!60,line width=0.3pt] (1.098,-1.098)--(2.598,1.500);
\draw[gray!60,line width=0.3pt] (1.098,-1.098)--(1.500,0.402);
\draw[gray!60,line width=0.3pt] (1.098,-1.098)--(-0.402,-1.500);
\draw[gray!60,line width=0.3pt] (1.098,-1.098)--(-1.500,-2.598);
\draw[gray!60,line width=0.3pt] (1.098,-1.098)--(2.598,-1.500);
\draw[gray!60,line width=0.3pt] (1.098,-1.098)--(1.500,-2.598);
\draw[gray!60,line width=0.3pt] (1.098,1.098)--(-1.500,2.598);
\draw[gray!60,line width=0.3pt] (1.098,1.098)--(-0.402,1.500);
\draw[gray!60,line width=0.3pt] (1.098,1.098)--(1.500,2.598);
\draw[gray!60,line width=0.3pt] (1.098,1.098)--(2.598,1.500);
\draw[gray!60,line width=0.3pt] (1.098,1.098)--(-1.500,-0.402);
\draw[gray!60,line width=0.3pt] (1.098,1.098)--(-0.402,-1.500);
\draw[gray!60,line width=0.3pt] (1.098,1.098)--(1.500,-0.402);
\draw[gray!60,line width=0.3pt] (1.098,1.098)--(2.598,-1.500);
\draw[gray!60,line width=0.3pt] (-1.098,-1.098)--(-2.598,1.500);
\draw[gray!60,line width=0.3pt] (-1.098,-1.098)--(-1.500,0.402);
\draw[gray!60,line width=0.3pt] (-1.098,-1.098)--(0.402,1.500);
\draw[gray!60,line width=0.3pt] (-1.098,-1.098)--(1.500,0.402);
\draw[gray!60,line width=0.3pt] (-1.098,-1.098)--(-2.598,-1.500);
\draw[gray!60,line width=0.3pt] (-1.098,-1.098)--(-1.500,-2.598);
\draw[gray!60,line width=0.3pt] (-1.098,-1.098)--(0.402,-1.500);
\draw[gray!60,line width=0.3pt] (-1.098,-1.098)--(1.500,-2.598);
\draw[gray!60,line width=0.3pt] (-1.500,2.598)--(-2.598,1.500);
\draw[gray!60,line width=0.3pt] (-1.500,2.598)--(-0.402,1.500);
\draw[gray!60,line width=0.3pt] (-1.500,2.598)--(1.500,2.598);
\draw[gray!60,line width=0.3pt] (-1.500,2.598)--(-1.500,-0.402);
\draw[gray!60,line width=0.3pt] (-2.598,1.500)--(-1.500,0.402);
\draw[gray!60,line width=0.3pt] (-2.598,1.500)--(0.402,1.500);
\draw[gray!60,line width=0.3pt] (-2.598,1.500)--(-2.598,-1.500);
\draw[gray!60,line width=0.3pt] (-0.402,1.500)--(-1.500,0.402);
\draw[gray!60,line width=0.3pt] (-0.402,1.500)--(2.598,1.500);
\draw[gray!60,line width=0.3pt] (-0.402,1.500)--(-0.402,-1.500);
\draw[gray!60,line width=0.3pt] (-1.500,0.402)--(1.500,0.402);
\draw[gray!60,line width=0.3pt] (-1.500,0.402)--(-1.500,-2.598);
\draw[gray!60,line width=0.3pt] (1.500,2.598)--(0.402,1.500);
\draw[gray!60,line width=0.3pt] (1.500,2.598)--(2.598,1.500);
\draw[gray!60,line width=0.3pt] (1.500,2.598)--(1.500,-0.402);
\draw[gray!60,line width=0.3pt] (0.402,1.500)--(1.500,0.402);
\draw[gray!60,line width=0.3pt] (0.402,1.500)--(0.402,-1.500);
\draw[gray!60,line width=0.3pt] (2.598,1.500)--(1.500,0.402);
\draw[gray!60,line width=0.3pt] (2.598,1.500)--(2.598,-1.500);
\draw[gray!60,line width=0.3pt] (1.500,0.402)--(1.500,-2.598);
\draw[gray!60,line width=0.3pt] (-1.500,-0.402)--(-2.598,-1.500);
\draw[gray!60,line width=0.3pt] (-1.500,-0.402)--(-0.402,-1.500);
\draw[gray!60,line width=0.3pt] (-1.500,-0.402)--(1.500,-0.402);
\draw[gray!60,line width=0.3pt] (-2.598,-1.500)--(-1.500,-2.598);
\draw[gray!60,line width=0.3pt] (-2.598,-1.500)--(0.402,-1.500);
\draw[gray!60,line width=0.3pt] (-0.402,-1.500)--(-1.500,-2.598);
\draw[gray!60,line width=0.3pt] (-0.402,-1.500)--(2.598,-1.500);
\draw[gray!60,line width=0.3pt] (-1.500,-2.598)--(1.500,-2.598);
\draw[gray!60,line width=0.3pt] (1.500,-0.402)--(0.402,-1.500);
\draw[gray!60,line width=0.3pt] (1.500,-0.402)--(2.598,-1.500);
\draw[gray!60,line width=0.3pt] (0.402,-1.500)--(1.500,-2.598);
\draw[gray!60,line width=0.3pt] (2.598,-1.500)--(1.500,-2.598);
\node[singZ] at (0.000,3.000) {};
\node[singZ] at (-0.000,-3.000) {};
\node[singZ] at (-3.000,0.000) {};
\node[singZ] at (3.000,-0.000) {};
\node[singZ] at (-1.098,1.098) {};
\node[singZ] at (1.098,-1.098) {};
\node[singZ] at (1.098,1.098) {};
\node[singZ] at (-1.098,-1.098) {};
\node[singH] at (-1.500,2.598) {};
\node[hole] at (-2.598,1.500) {};
\node[hole] at (-0.402,1.500) {};
\node[singH] at (-1.500,0.402) {};
\node[hole] at (1.500,2.598) {};
\node[singH] at (0.402,1.500) {};
\node[singH] at (2.598,1.500) {};
\node[hole] at (1.500,0.402) {};
\node[hole] at (-1.500,-0.402) {};
\node[singH] at (-2.598,-1.500) {};
\node[singH] at (-0.402,-1.500) {};
\node[hole] at (-1.500,-2.598) {};
\node[singH] at (1.500,-0.402) {};
\node[hole] at (0.402,-1.500) {};
\node[hole] at (2.598,-1.500) {};
\node[singH] at (1.500,-2.598) {};
\end{tikzpicture}

\caption{The $24$ points of $D_4^*$ at distance $1$ from the singular point $0$ (the Hurwitz units) are the vertices of a regular $24$-cell; shown is its orthogonal projection onto a Coxeter plane of $W(F_4)$, with all $96$ edges. Squares: the singular points $\pm e_i\in e_1+D_4$; filled discs: the singular points $\frac12(\pm1,\pm1,\pm1,\pm1)\in h+D_4$ (even number of minus signs); open circles: the holes $\frac12(\pm1,\pm1,\pm1,\pm1)\in h'+D_4$ (odd number of minus signs). Each class consists of the vertices of a $16$-cell. By Lemma~\ref{lem:honeycomb}, around a hole all $24$ nearest points of $D_4^*$ are singular.}\label{fig:24cell}
\end{figure}

\medskip
\noindent\textit{The frame $\F$ and its hyperoctahedral group.}
The roots $\pm e_i\pm e_j$ of $D_4$ split into three \emph{frames}, i.e. sets of four mutually orthogonal pairs $\pm v$; one of them is the frame $\F$ defined in \eqref{frame}. Its hyperoctahedral group $\Oct$ consists of the orthogonal maps $L$ of the form
\begin{equation}\label{Lpi}
Lf_i=\varepsilon_if_{\pi(i)},\qquad i=1,\dots,4,\qquad \varepsilon_i\in\{\pm1\},\ \pi\in S_4,
\end{equation}
and has order $2^4\cdot4!=384$. Since the vectors $f_i$ are pairwise orthogonal and have the same length $\sqrt2$, every choice of $\varepsilon_i$ and $\pi$ in \eqref{Lpi} defines an orthogonal map. It will be convenient to use the \emph{frame coordinates}
\begin{equation}\label{xi00}
\xi_i=\xi_i(x)=\tfrac12\scal{x}{f_i},
\end{equation}
i.e.
\begin{equation}\label{xi}
\xi_1=\tfrac12(x_1+x_3),\ \quad \xi_2=\tfrac12(x_1-x_3),\ \quad \xi_3=\tfrac12(x_2+x_4),\quad \xi_4=\tfrac12(x_2-x_4) .
\end{equation}
Note that
\begin{equation}\label{xixx}
x_1=\xi_1+\xi_2,\quad  x_3=\xi_1-\xi_2,\quad  x_2=\xi_3+\xi_4,\quad  x_4=\xi_3-\xi_4.
\end{equation}
If $L$ is given by \eqref{Lpi}, then
$$
\scal{Lx}{f_{\pi(i)}}=\scal{x}{L^{-1}f_{\pi(i)}}=\varepsilon_i\scal{x}{f_i},
$$
that is,
\begin{equation}\label{xiL}
\xi_{\pi(i)}(Lx)=\varepsilon_i\,\xi_i(x),\qquad i=1,\dots,4.
\end{equation}

The frame $\F$ is adapted to $\M$ at the origin: by Lemma~\ref{lem:expansions}(i), the quadratic part of $S$ at $0$ is, up to the positive factor $\varpi^2$,
$$
x_1x_3-x_2x_4=\xi_1^2-\xi_2^2-\xi_3^2+\xi_4^2=0.
$$
Hence the tangent cone of $\M$ at $0$ is its zero set, and the positive and negative eigenspaces of $D^2S(0)$ are spanned by $\{f_1,f_4\}$ and $\{f_2,f_3\}$, respectively.

\section{The view from a hole: a symmetric equation and a theta series}\label{sec:hole}

In this section we put the origin at the hole $h'$ and use the frame coordinates \eqref{xi}. Throughout, $y=x-h'$,
$$
\xi_k=\xi_k(y)=\tfrac12\scal{y}{f_k},\qquad C_k=\gamma^2(\xi_k)\in[0,1],\qquad k=1,\dots,4,
$$
where $\gamma(t)=\cosl(\varpi t)$, and $\mathcal{E}_j(C)$ denotes the $j$-th elementary symmetric function of $C_1,\dots,C_4$, for example,
\begin{align*}
\mathcal{E}_2(C)&=\sum_{1\le i<j\le 4}C_iC_j, \\
\mathcal{E}_4(C)&=C_1C_2C_3C_4.
\end{align*}
We shall see that, seen from a hole, $\M$ is given by an equation which is symmetric in the four frame directions; this makes the full symmetry group of Theorem~\ref{thB} visible. We then show that the same equation is the vanishing of a theta series of a translate of the Hurwitz order.

\subsection{A symmetric equation}

\begin{theorem}\label{thm:symmetric}
Let $\beta_k=\arctan C_k\in[0,\frac{\pi}4]$. Then, for all $y\in\R{4}$,
\begin{equation}\label{Ssym}
S(h'+y)=-\frac{1-\mathcal{E}_2(C)+\mathcal{E}_4(C)}{\prod_{k=1}^4(1+C_k^2)^{1/2}\,\cos(\beta_1+\beta_2-\frac{\pi}{4})\cos(\beta_3+\beta_4-\frac{\pi}{4})},
\end{equation}
and both cosines in the denominator are at least $\frac{1}{\sqrt2}$. Consequently,
\begin{equation}\label{Msym}
\M=h'+\Bigl\{y\in\R{4}:\ \mathcal{E}_2(C)=1+\mathcal{E}_4(C)\Bigr\}
=h'+\Bigl\{y\in\R{4}:\ \sum_{k=1}^4\arctan\gamma^2(\xi_k)=\frac{\pi}{2}\Bigr\}.
\end{equation}
\end{theorem}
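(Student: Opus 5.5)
The plan is to compute $S(h'+y)$ directly with the addition theorem \eqref{additionN}, rewrite everything in terms of $C_k=\gamma^2(\xi_k)$, and then recognise the result as a tangent addition formula in the angles $\beta_k=\arctan C_k$.

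\emph{Step 1: reduce to products of lemniscatic cosines.} Write $x=h'+y$, so $x_i=\frac12+y_i$ for $i=1,2,3$ and $x_4=-\frac12+y_4$. Since $\sigma(t+\frac12)=\gamma(t)$ and $\sigma(t-\frac12)=-\sigma(t+\frac12)=-\gamma(t)$, we get
$$
S(h'+y)=\gamma(y_1)\gamma(y_3)+\gamma(y_2)\gamma(y_4).
$$
By \eqref{xixx} applied to $y$, we have $y_1=\xi_1+\xi_2$, $y_3=\xi_1-\xi_2$, $y_2=\xi_3+\xi_4$ and $y_4=\xi_3-\xi_4$. So each summand has the form $\gamma(u+v)\gamma(u-v)$.

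\emph{Step 2: the addition theorem for $\gamma$.} Apply \eqref{additionN} with $u$ replaced by $u+\frac12$:
$$
\gamma(u+v)\gamma(u-v)=\frac{\gamma^2(u)-\sigma^2(v)}{1+\gamma^2(u)\sigma^2(v)}.
$$
Now substitute $\sigma^2(v)=(1-\gamma^2(v))/(1+\gamma^2(v))$ from \eqref{sigmagamma}. For the first summand this gives
$$
\gamma(y_1)\gamma(y_3)=\frac{C_1+C_2+C_1C_2-1}{1+C_1+C_2-C_1C_2},
$$
and the analogous formula with $(C_3,C_4)$ holds for the second summand. The denominator is at least $1$ because $0\le C_k\le1$, so nothing degenerates.

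\emph{Step 3: tangent addition.} Since $\tan\beta_k=C_k$, we have $\tan(\beta_1+\beta_2)=(C_1+C_2)/(1-C_1C_2)$. Hence
$$
\tan\theta_1=\frac{C_1+C_2+C_1C_2-1}{1+C_1+C_2-C_1C_2},\qquad \theta_1:=\beta_1+\beta_2-\tfrac\pi4 .
$$
So the first summand equals $\tan\theta_1$, and likewise the second equals $\tan\theta_2$ with $\theta_2=\beta_3+\beta_4-\frac\pi4$. Because $\beta_k\in[0,\frac\pi4]$, both $\theta_j$ lie in $[-\frac\pi4,\frac\pi4]$, which gives $\cos\theta_j\ge\frac1{\sqrt2}$.

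Next,
$$
\tan\theta_1+\tan\theta_2=\frac{\sin(\theta_1+\theta_2)}{\cos\theta_1\cos\theta_2},\qquad \sin(\theta_1+\theta_2)=\sin\Bigl(\sum\beta_k-\tfrac\pi2\Bigr)=-\cos\sum\beta_k .
$$
Finally, $\cos\sum\beta_k=\operatorname{Re}\prod_k(1+\mathrm{i}C_k)\big/\prod_k(1+C_k^2)^{1/2}$, and expanding the product gives real part $1-\mathcal{E}_2(C)+\mathcal{E}_4(C)$. This is exactly \eqref{Ssym}.

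\emph{Step 4: the zero set.} The denominator in \eqref{Ssym} is positive, so $S(h'+y)=0$ if and only if $\mathcal{E}_2(C)=1+\mathcal{E}_4(C)$, which is equivalent to $\cos\sum\beta_k=0$. Since $\sum\beta_k\in[0,\pi]$, this happens exactly when $\sum\beta_k=\frac\pi2$, giving \eqref{Msym}.

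The only real hazard is sign and shift bookkeeping in Step 1: the hole has $x_4=-\frac12+y_4$, and this is what turns the minus sign in $S$ into a plus. Once that is right, the rest is a mechanical identity check.
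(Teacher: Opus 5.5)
Your proof is correct and follows the paper's argument step for step: the reduction to $\gamma(y_1)\gamma(y_3)+\gamma(y_2)\gamma(y_4)$, the addition theorem at $u=\frac12+\xi_1$, $v=\xi_2$ combined with \eqref{sigmagamma}, the identification of each summand with $\tan(\beta_1+\beta_2-\frac\pi4)$, and taking the real part of $\prod_k(1+\mathrm{i}C_k)$. One minor point, which the paper shares: $\tan(\beta_1+\beta_2)$ is undefined when $C_1=C_2=1$, but this is harmless, because $\tan(\beta_1+\beta_2-\frac\pi4)$ can be expanded directly by the three-angle tangent formula, whose denominator $1+C_1+C_2-C_1C_2$ never vanishes.
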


\begin{proof}
Since $\sigma(\frac12+t)=\gamma(t)$, $\sigma(-\frac12+t)=-\gamma(t)$ and $\gamma$ is even, we have
$S(h'+y)=\gamma(y_1)\gamma(y_3)+\gamma(y_2)\gamma(y_4)$. By \eqref{xi}, $y_1=\xi_1+\xi_2$ and $y_3=\xi_1-\xi_2$, so the addition theorem \eqref{additionN} with $u=\frac12+\xi_1$, $v=\xi_2$ and the identity \eqref{sigmagamma} give
$$
\gamma(y_1)\gamma(y_3)=\frac{C_1-\sigma^2(\xi_2)}{1+C_1\sigma^2(\xi_2)}
=\frac{C_1+C_2+C_1C_2-1}{1+C_1+C_2-C_1C_2}
=\tan\Bigl(\beta_1+\beta_2-\frac{\pi}4\Bigr);
$$
the last equality is the addition formula for the tangent, written as $\frac{T-1}{T+1}=\tan(\theta-\frac{\pi}{4})$ for
$$
T=\tan\theta=\frac{C_1+C_2}{1-C_1C_2}.
$$
In the same way $\gamma(y_2)\gamma(y_4)=\tan(\beta_3+\beta_4-\frac{\pi}4)$. Put $a=\beta_1+\beta_2-\frac\pi4$ and $b=\beta_3+\beta_4-\frac{\pi}4$; then $a,b\in[-\frac\pi4,\frac\pi4]$, and
$$
\tan a+\tan b=\frac{\sin(a+b)}{\cos a\cos b}=-\frac{\cos(\beta_1+\beta_2+\beta_3+\beta_4)}{\cos a\cos b}.
$$
Finally,
$$
\cos\sum\beta_k=\operatorname{Re}\prod_k(1+\mathrm{i}C_k)\big/\prod_k|1+\mathrm{i}C_k|=(1-\mathcal{E}_2+\mathcal{E}_4)/\prod_k(1+C_k^2)^{1/2}.
$$
This proves \eqref{Ssym}. Since $\sum\beta_k\in[0,\pi]$, the numerator vanishes if and only if $\sum\beta_k=\frac{\pi}{2}$, which gives \eqref{Msym}.
\end{proof}

\begin{corollary}\label{cor:Oct}
For every $L\in\Oct$ and every $n\in D_4$, the isometry
$$
x\mapsto h'+L(x-h')+n
$$
maps $\M$ onto itself.
\end{corollary}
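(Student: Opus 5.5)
The plan is to use the symmetric description \eqref{Msym} from Theorem~\ref{thm:symmetric}. There $\M-h'$ is the zero set of $G(y)=\mathcal{E}_2(C)-1-\mathcal{E}_4(C)$, where $C_k=\gamma^2(\xi_k(y))$. The function $G$ is a symmetric polynomial in $C_1,\dots,C_4$, and each $C_k$ depends on $\xi_k$ only through $\gamma^2$, which is an even function. So the whole argument reduces to tracking how $L\in\Oct$ acts on the frame coordinates, together with the translation part.

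First, the linear part. For $L\in\Oct$ written as in \eqref{Lpi}, the rule \eqref{xiL} gives $\xi_{\pi(i)}(Ly)=\varepsilon_i\xi_i(y)$. Since $\gamma$ is even,
$$
C_{\pi(i)}(Ly)=\gamma^2(\varepsilon_i\xi_i(y))=C_i(y).
$$
Thus the vector $(C_k(Ly))_k$ is a permutation of $(C_k(y))_k$, and every elementary symmetric function is unchanged: $\mathcal{E}_j(C(Ly))=\mathcal{E}_j(C(y))$. By \eqref{Msym}, $y\in\M-h'$ if and only if $Ly\in\M-h'$. Equivalently, the map $x\mapsto h'+L(x-h')$ sends $\M$ onto $\M$; it is onto because $L^{-1}\in\Oct$ as well.

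Second, the translations. Theorem~\ref{thA}(a) already gives $D_4$-invariance of $\M$, via \eqref{transl}: $S(x+n)=\pm S(x)$ for $n\in D_4$. The general isometry is the composition of $x\mapsto h'+L(x-h')$ with $x\mapsto x+n$, so it maps $\M$ onto itself.

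There is no serious obstacle here, because everything was done in Theorem~\ref{thm:symmetric}. The only point needing care is the bookkeeping in \eqref{xiL}: the index permutation and the signs must be applied on the correct side, so that one really obtains a permutation of the $C_k$. This is harmless because the conclusion only needs the multiset $\{C_k(Ly)\}$ to equal $\{C_k(y)\}$. One could also argue with the arctan form $\sum_k\arctan\gamma^2(\xi_k)=\pi/2$, which is visibly invariant under permutations and sign changes of the $\xi_k$.
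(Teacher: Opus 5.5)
Your proposal is correct and matches the paper's proof: you use \eqref{xiL} and the evenness of $\gamma^2$ to see that $L$ permutes $C_1,\dots,C_4$, apply the symmetric equation \eqref{Msym}, and take the $D_4$-invariance from Theorem~\ref{thA}(a). Your remark that $L^{-1}\in\Oct$ gives surjectivity is a small detail the paper leaves implicit.
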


\begin{proof}
By \eqref{xiL}, $L$ acts on the frame coordinates $\xi_1,\dots,\xi_4$ by a signed permutation. Since $\gamma^2$ is an even function, $L$ therefore permutes the numbers $C_1,\dots,C_4$. The equation $\mathcal{E}_2(C)=1+\mathcal{E}_4(C)$ in \eqref{Msym} is symmetric in $C_1,\dots,C_4$, hence it is preserved. Finally, the translations by $D_4$ preserve $\M$ by Theorem~\ref{thA}(a).
\end{proof}

\begin{remark}[A second proof of minimality]\label{rem:secondproof}
Theorem~\ref{thm:symmetric} shows that, seen from a hole, $\M$ is a \emph{separable} hypersurface in the frame coordinates, with the same function in all four variables. Indeed, let
$$
\phi(t)=\arctan\gamma^2(t)-\frac{\pi}8 ,\qquad -\frac\pi8\le\phi\le\frac\pi8 .
$$
Then, by \eqref{Msym},
$$
\M-h'=\{y\in\R{4}:\ u(y)=0\},\qquad u(y)=\sum_{k=1}^4\phi(\xi_k).
$$
A direct computation using $\gamma'^2=\varpi^2(1-\gamma^4)$ gives
$$
\phi'^2=\frac{4\varpi^2\gamma^2(1-\gamma^4)}{(1+\gamma^4)^2}=\varpi^2\cos 4\phi,
\qquad
\phi''=-2\varpi^2\sin4\phi .
$$
The coordinates $\xi_k$ are orthonormal coordinates up to the common factor $\frac1{\sqrt2}$. For a separable function, $\Delta_1u$ is therefore proportional to
$$
\sum_{i\neq j}\phi''(\xi_i)\,\phi'(\xi_j)^2 .
$$
Put $a_i=4\phi(\xi_i)$. Then
$$
\sum_{i\neq j}\phi''(\xi_i)\,\phi'(\xi_j)^2=-2\varpi^4\sum_{i\ne j}\sin a_i\cos a_j=-2\varpi^4\sum_{i<j}\sin(a_i+a_j).
$$
On $\M-h'$ we have $a_1+a_2+a_3+a_4=0$, hence $\sin(a_i+a_4)=-\sin(a_j+a_k)$ whenever $\{i,j,k\}=\{1,2,3\}$, and the last sum vanishes. This gives an independent proof of the minimality of $\Mreg$.

The singular set is also easily recovered: $Du=0$ if and only if $\cos4\phi(\xi_k)=0$ for all $k$, i.e. $\phi(\xi_k)=\pm\frac\pi8$, and together with $\sum_k\phi(\xi_k)=0$ this forces two plus signs and two minus signs.

Note that $\phi$ satisfies the autonomous equation $\phi'^2=\varpi^2\cos4\phi$, of trigonometric type, while the function $\psi$ in the first separable form \eqref{separable} of $\M$ satisfies an equation of exponential type (\S\ref{ssec:separable}).
\end{remark}

\subsection{Three-dimensional sections}\label{ssec:sections}
Theorem~\ref{thm:symmetric} gives a convenient way to look at $\M$. Let us cut $\M$ by the hyperplanes orthogonal to $f_4$,
$$
H_c=\{x\in\R{4}:\ \xi_4(x-h')=c\},\qquad c\in\R{},
$$
and use $(\xi_1,\xi_2,\xi_3)$ as coordinates in $H_c$. Since $\xi_k=\frac{1}{\sqrt2}\scal{y}{f_k/\sqrt2}$ and the vectors $f_k/\sqrt2$ are orthonormal, these coordinates are Cartesian coordinates up to the common factor $\frac1{\sqrt2}$. By \eqref{Msym}, the section $\M\cap H_c$ is, in these coordinates, the surface
\begin{equation}\label{section}
\Sigma_c=\Bigl\{\xi\in\R{3}:\ \sum_{k=1}^3\arctan\gamma^2(\xi_k)=\frac{\pi}{2}-\arctan\gamma^2(c)\Bigr\}.
\end{equation}
Equivalently, in the notation of Remark~\ref{rem:secondproof},
$$
\Sigma_c=\{\xi\in\R{3}:\ v(\xi)=-\phi(c)\},\qquad v(\xi)=\phi(\xi_1)+\phi(\xi_2)+\phi(\xi_3).
$$
Three of these surfaces are shown in Figure~\ref{fig:sections}.

\begin{figure}[ht]
\centering
\begin{tabular}{ccc}
\includegraphics[width=0.31\textwidth]{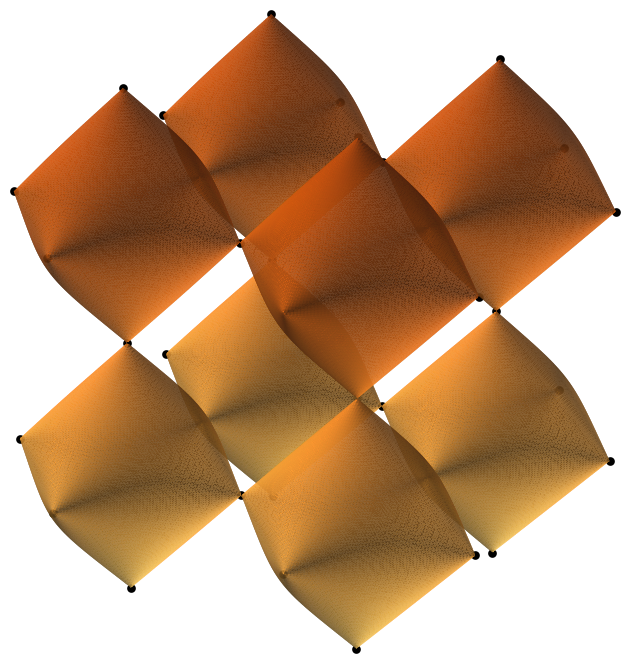} &
\includegraphics[width=0.31\textwidth]{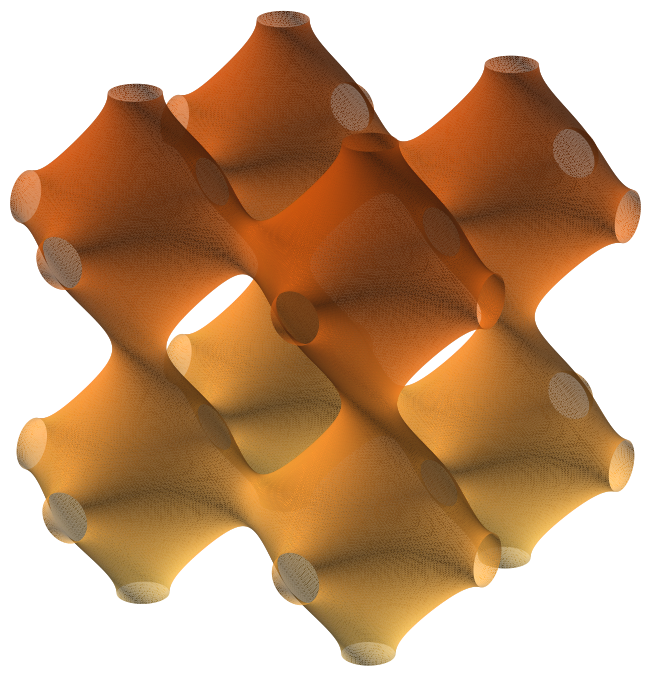} &
\includegraphics[width=0.31\textwidth]{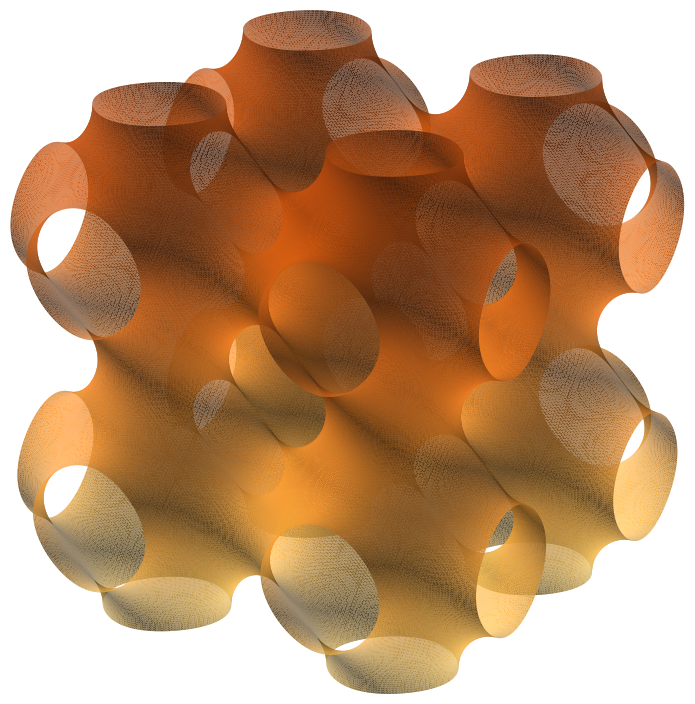}\\
(a) $c=\frac12$ & (b) $c=0.4$ & (c) $c=\frac14$
\end{tabular}
\caption{The sections $\Sigma_c$ of $\M$ by the hyperplanes $\xi_4=c$ orthogonal to $f_4$, shown in $2\times2\times2$ period cells of the lattice $\Z^3$. (a) For $c=\frac12$ the section is a cubic packing of closed ``pillows'' which touch each other only at the black points; these are singular points of $\M$, and near each of them the section is a double cone. (b) For $\frac14<c<\frac12$ the contact points open into necks. (c) For $c=\frac14$ the section is congruent to itself under the translation by $(\frac12,\frac12,\frac12)$ combined with an interchange of its two sides. The surfaces $\Sigma_c$ are \emph{not} minimal surfaces in $\R{3}$; see Remark~\ref{rem:notminimal}.}\label{fig:sections}
\end{figure}

\begin{proposition}\label{pro:sections}
In the above notation:
\begin{enumerate}
\item[(i)] Each $\Sigma_c$ is invariant under the translations by $\Z^3$ and under all signed permutations of $\xi_1,\xi_2,\xi_3$, i.e. under the full symmetry group of the cubic lattice $\Z^3$.
\item[(ii)] One has $\Sigma_{c+1}=\Sigma_{-c}=\Sigma_c$ and
$$
\Sigma_{c+\frac12}=\Sigma_c+(\tfrac12,\tfrac12,\tfrac12).
$$
Hence the whole family is determined by the sections with $0\le c\le\frac14$, and $\Sigma_{1/4}$ is invariant under the translation by $(\frac12,\frac12,\frac12)$.
\item[(iii)] If $c\notin\frac12\Z$, then $\Sigma_c$ is a smooth real analytic surface.
\item[(iv)] The section $\Sigma_{1/2}$ is singular exactly at the points congruent modulo $\Z^3$ to $(\frac12,0,0)$, $(0,\frac12,0)$ or $(0,0,\frac12)$, i.e. at the centres of the faces of the unit cubes $m+[-\frac12,\frac12]^3$, $m\in\Z^3$. Near each of these points $\Sigma_{1/2}$ is, up to a local analytic diffeomorphism tangent to the identity, a circular double cone.
\item[(v)] $\Sigma_{1/2}\cap[-\frac12,\frac12]^3$ is the boundary of a compact star-shaped region containing the origin, homeomorphic to a closed ball, and it meets the boundary of the cube only at the six face centres.
\end{enumerate}
\end{proposition}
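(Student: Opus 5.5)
Everything will be read off from the separable description $\Sigma_c=\{v(\xi)=-\phi(c)\}$, $v=\phi(\xi_1)+\phi(\xi_2)+\phi(\xi_3)$, with $\phi(t)=\arctan\gamma^2(t)-\frac\pi8$, using only the elementary properties of $\gamma$ from Lemma~\ref{lem:sl} and \eqref{norm}. $\gamma$ is even with period $2$, and $\gamma(t+1)=-\gamma(t)$, so $\gamma^2$ and $\phi$ are even and $1$-periodic. Moreover $\gamma^2$ decreases strictly from $1$ to $0$ on $[0,\frac12]$, hence $\phi$ decreases strictly from $\frac\pi8$ to $-\frac\pi8$ there. By \eqref{sigmagamma} and $\gamma(t+\frac12)=\pm\sigma(t)$, $\gamma^2(t+\frac12)=(1-\gamma^2(t))/(1+\gamma^2(t))$, and since $\arctan\frac{1-x}{1+x}=\frac\pi4-\arctan x$ for $x\ge0$, we get $\phi(t+\frac12)=-\phi(t)$.

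(i) and (ii) are then formal. Evenness and $1$-periodicity of $\phi$ give invariance of $v$ under sign changes and integer translations of each $\xi_k$, and $v$ is symmetric in $\xi_1,\xi_2,\xi_3$. Since the right-hand side $-\phi(c)$ is even and $1$-periodic in $c$, $\Sigma_{c+1}=\Sigma_{-c}=\Sigma_c$. For the half-shift, $v(\xi+(\frac12,\frac12,\frac12))=-v(\xi)$, so $\xi\in\Sigma_c$ iff $\xi+(\frac12,\frac12,\frac12)\in\{v=\phi(c)\}=\{v=-\phi(c+\frac12)\}=\Sigma_{c+1/2}$. Combining $c\mapsto -c$, $c\mapsto c+1$ and $c\mapsto c+\frac12$, every $c$ reduces to $[0,\frac14]$, and $\Sigma_{1/4}=\Sigma_{-1/4}=\Sigma_{1/4}+(\frac12,\frac12,\frac12)$.

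(iii) and (iv) are a critical point analysis. $Dv=0$ iff $\phi'(\xi_k)=0$ for $k=1,2,3$; by $\phi'^2=\varpi^2\cos4\phi$ (Remark~\ref{rem:secondproof}) this means $\phi(\xi_k)=\pm\frac\pi8$, i.e. $\xi_k\in\frac12\Z$. The critical values of $v$ are therefore $\pm\frac\pi8,\pm\frac{3\pi}8$. Since $|\phi(c)|\le\frac\pi8$ with equality iff $c\in\frac12\Z$, $\Sigma_c$ contains a critical point only if $c\in\frac12\Z$; for all other $c$ the implicit function theorem gives (iii). For $c=\frac12$ the level is $v=\frac\pi8$, which forces two of the $\phi(\xi_k)$ to equal $\frac\pi8$ and one to equal $-\frac\pi8$. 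Up to $\Z^3$ these are exactly the three face centres. At $(\frac12,0,0)$, use $\phi(t)=\pm\frac\pi8\mp\varpi^2 t^2+O(t^4)$ near $t\in\frac12\Z$, which comes from $\phi''=-2\varpi^2\sin4\phi$. The Hessian of $v$ there is $\mathrm{diag}(2\varpi^2,-2\varpi^2,-2\varpi^2)$, which is nondegenerate of signature $(1,2)$. Lemma~\ref{lem:Morse} then gives the circular double cone with a local diffeomorphism tangent to the identity.

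(v) is the main obstacle. On $Q=[-\frac12,\frac12]^3$, $\phi(\xi_k)=g(|\xi_k|)$ with $g$ strictly decreasing on $[0,\frac12]$. Hence, along each ray $t\mapsto t\omega$ with $\omega\in\partial Q$, $v$ strictly decreases from $\frac{3\pi}8$ at the origin to $v(\omega)$. On the face $|\xi_1|=\frac12$ we have $v(\omega)=-\frac\pi8+g(|\xi_2|)+g(|\xi_3|)\le\frac\pi8$, with equality only at the face centre. So every ray not ending at a face centre crosses the level $\frac\pi8$ exactly once and strictly inside $Q$, while the rays to the face centres reach it only at their endpoints. The crossing parameter $t(\omega)$ is continuous by monotonicity and the implicit function theorem in $t$ (the $t$-derivative is strictly negative for $0<t<1$ off the axes; on the axes $v$ is still strictly monotone). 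Continuity at the face-centre directions, where $t(\omega)=1$, follows from strict monotonicity alone. Therefore $\{v\ge\frac\pi8\}\cap Q=\{t\omega:0\le t\le t(\omega)\}$ is star-shaped and homeomorphic to a closed ball, its boundary is $\Sigma_{1/2}\cap Q$, and this boundary meets $\partial Q$ only at the six face centres.
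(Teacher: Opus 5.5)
Your proposal is correct and follows the paper's proof essentially step for step: the same separable level-set description $\Sigma_c=\{v=-\phi(c)\}$, the half-shift identity $\phi(t+\frac12)=-\phi(t)$ (the paper's $\arctan\gamma^2(t+\frac12)=\frac\pi4-\arctan\gamma^2(t)$), the same critical-point and Hessian analysis followed by Lemma~\ref{lem:Morse}, and the same ray argument for (v). In (v) you also prove explicitly that the radial crossing function $t(\omega)$ is continuous, including at the face-centre directions where $t(\omega)=1$; the paper leaves this detail implicit.
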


\begin{proof}
(i) The function $\gamma^2$ is even and has period $1$, and the left-hand side of \eqref{section} is symmetric in $\xi_1,\xi_2,\xi_3$.

(ii) The first two identities follow in the same way. For the third one we use \eqref{sigmagamma}: since $\gamma^2(t+\frac12)=\sigma^2(t)=(1-\gamma^2(t))/(1+\gamma^2(t))$, we have
$$
\arctan\gamma^2(t+\tfrac12)=\frac{\pi}{4}-\arctan\gamma^2(t).
$$
Replacing $\xi$ by $\xi+(\frac12,\frac12,\frac12)$ therefore replaces the left-hand side $L(\xi)$ of \eqref{section} by $\frac{3\pi}4-L(\xi)$, while replacing $c$ by $c+\frac12$ replaces the right-hand side $\frac\pi2-\arctan\gamma^2(c)$ by $\frac{\pi}4+\arctan\gamma^2(c)$. The two equations are equivalent, which gives the identity.

(iii) The surface $\Sigma_c$ is the level set $v=-\phi(c)$, and $\nabla v=0$ only if $\phi'(\xi_k)=0$, i.e. $\phi(\xi_k)=\pm\frac{\pi}{8}$, for $k=1,2,3$. At such points $v\in\{\pm\frac\pi8,\pm\frac{3\pi}8\}$. On the other hand, $-\phi(c)\in[-\frac\pi8,\frac\pi8]$, with equality at the end points only if $\gamma^2(c)\in\{0,1\}$, i.e. $c\in\frac12\Z$. Hence for $c\notin\frac12\Z$ the level $-\phi(c)$ is a regular value of $v$.

(iv) For $c=\frac12$ we have $\gamma^2(c)=0$ and $-\phi(c)=\frac\pi8$. By the proof of (iii), the singular points are the points where two of the numbers $\phi(\xi_k)$ equal $\frac\pi8$ and one equals $-\frac\pi8$, i.e. two of the $\xi_k$ are integers and one lies in $\frac12+\Z$. At such a point the Hessian of $v$ is diagonal with entries $\phi''(\xi_k)=-2\varpi^2\sin4\phi(\xi_k)$, that is, $-2\varpi^2$, $-2\varpi^2$ and $2\varpi^2$ (in some order). The Morse lemma (Lemma~\ref{lem:Morse}) gives the double cone.

(v) For $\xi\in[-\frac12,\frac12]^3$ and $0\le t\le1$, each function $t\mapsto\gamma^2(t\xi_k)$ is non-increasing, and strictly decreasing if $\xi_k\neq0$, because $\gamma^2$ decreases strictly on $[0,\frac12]$. Hence the left-hand side of \eqref{section} decreases strictly along every ray from the origin inside the cube. It equals $\frac{3\pi}4>\frac\pi2$ at the origin. On a face of the cube, say $\xi_1=\frac12$, we have $\gamma^2(\xi_1)=0$, and the left-hand side is at most $\frac\pi2$, with equality only if $\xi_2=\xi_3=0$. Therefore every ray from the origin to a point of the boundary of the cube, other than a face centre, meets $\Sigma_{1/2}$ exactly once, and the region $\{L\ge\frac\pi2\}\cap[-\frac12,\frac12]^3$ is star-shaped with respect to the origin. Its boundary is $\Sigma_{1/2}\cap[-\frac12,\frac12]^3$, which is a topological sphere.
\end{proof}

\begin{remark}[Interpretation]\label{rem:interpretation}
Proposition~\ref{pro:sections} and Figure~\ref{fig:sections} give a concrete picture of how the Clifford-cone singularities organize the topology of $\M$. As $c$ moves from $\frac12$ to $\frac14$, the section $\Sigma_c$ changes from a packing of pillows centred at the points of $\Z^3$, which touch each other at conical points (Figure~\ref{fig:sections}(a)), to a connected surface with necks (Figure~\ref{fig:sections}(b),(c)). When $c$ continues to $0$, the same happens in reverse, but now the pillows are centred at the points of $(\frac12,\frac12,\frac12)+\Z^3$, by Proposition~\ref{pro:sections}(ii). In other words, each conical point of $\Sigma_{1/2}$ is the place where a neck of the nearby sections is pinched off; this is the three-dimensional shadow of the fact that the link of a singular point of $\M$ is a torus rather than a sphere. The sections through a singular point by hyperplanes of other directions look similar: the tangent cone of $\M$ is cut by a hyperplane into a quadratic cone in $\R{3}$.
\end{remark}

\begin{remark}\label{rem:notminimal}
The surfaces in Figure~\ref{fig:sections} resemble some classical triply periodic minimal surfaces in $\R{3}$ with cubic symmetry, notably Schwarz's P-surface \cite{Meeks90}. We stress, however, that $\Sigma_c$ are sections of a minimal \emph{hypersurface} of $\R{4}$ by hyperplanes, and such sections are, in general, not minimal. This is indeed the case here. Since $\Sigma_c$ is the level set of the separable function $v$ in three variables, the computation of Remark~\ref{rem:secondproof} shows that $\Delta_1v$ is proportional to
$$
\sum_{1\le i<j\le3}\sin(a_i+a_j)=\sum_{k=1}^3\sin(A-a_k),\qquad a_k=4\phi(\xi_k),\quad A=a_1+a_2+a_3=-4\phi(c),
$$
and this expression does not vanish identically on $\Sigma_c$. For example, take $c=\frac14$. Since
$$
\gamma^2(\tfrac14)=\sinl^2(\tfrac{\varpi}{4})=\sqrt2-1=\tan\tfrac\pi8,
$$
we have $\phi(\frac14)=0$, so that $A=0$ and
$$
\Sigma_{1/4}=\{\xi:\ \phi(\xi_1)+\phi(\xi_2)+\phi(\xi_3)=0\}.
$$
Choose $t$ with $\phi(t)=-\frac{\pi}{16}$ (this is possible because $\phi$ takes all values in $[-\frac\pi8,\frac\pi8]$). Since $\phi(0)=\frac\pi8$, the point $\xi=(t,t,0)$ lies on $\Sigma_{1/4}$, and there $a_1=a_2=-\frac\pi4$, $a_3=\frac\pi2$, so that
$$
\sum_{k=1}^3\sin(A-a_k)=-\sum_{k=1}^3\sin a_k=\sqrt2-1\neq0 .
$$
Since $\Sigma_{1/4}$ is smooth (Proposition~\ref{pro:sections}(iii)), its mean curvature does not vanish at this point. Thus the resemblance is only qualitative: what Figure~\ref{fig:sections} shows is how a single minimal hypersurface with Clifford-cone singularities looks in three-dimensional slices, not a family of minimal surfaces.
\end{remark}

\subsection{A theta series}
We use the Jacobi theta functions with nome $q=e^{\pi\mathrm{i}\tau}$, $\operatorname{Im}\tau>0$, in the convention of \cite[\S21.11]{WhittakerWatson} and \cite[Ch.~1]{Lawden}:
\begin{align*}
\theta_1(z|\tau)&=-\mathrm{i}\sum_{a\in\frac12+\Z}(-1)^{a-\frac12}q^{a^2}e^{2\mathrm{i}az},\\
\theta_3(z|\tau)&=\sum_{b\in\Z}q^{b^2}e^{2\mathrm{i}bz}.
\end{align*}
Equivalently,
\begin{align*}
\theta_1(z|\tau)&=2\sum_{n\ge0}(-1)^nq^{(n+\frac12)^2}\sin(2n+1)z\\
\theta_3(z|\tau)&=1+2\sum_{n\ge1}q^{n^2}\cos2nz.
\end{align*}
In this convention
$$
\theta_2(z)=\theta_1(z+\frac\pi2), \qquad \theta_4(z)=\theta_3(z+\frac\pi2),
$$
and Jacobi's imaginary transformation reads
$$
\theta_2(0|-1/\tau)=(-\mathrm{i}\tau)^{1/2}\theta_4(0|\tau)$$
(\cite[\S21.51]{WhittakerWatson}). Recall from the introduction the theta series
$$
\vartheta_\Lambda(y;\tau)=\sum_{v\in\Lambda}e^{\pi\mathrm{i}\tau|v|^2+2\pi\mathrm{i}\scal{v}{y}},\qquad \operatorname{Im}\tau>0 ,
$$
and the translate of the dual lattice $D_4^*$
$$
\Lambda=\tfrac12 f_1+D_4^*=\Bigl\{\tfrac12\sum_{k=1}^4\eta_kf_k:\ \eta\in\Z^4,\ \eta_1+\eta_2+\eta_3+\eta_4\ \text{odd}\Bigr\}.
$$
In the quaternionic notation of Remark~\ref{rem:Hurwitz}, $\Lambda=\frac12(1+\mathrm{j})+\mathcal{O}$ is a translate of the Hurwitz order.

\begin{theorem}\label{thm:theta}
With the above notation,
\begin{enumerate}
\item[(i)] For all real $t$,
$$
\sigma(t)=\frac{\theta_1(\pi t|\mathrm{i})}{\theta_3(\pi t|\mathrm{i})}.
$$
\item[(ii)] Let
$$
g(t)=\theta_3\Bigl(\pi t\,\Big|\,\frac{1+\mathrm{i}}{2}\Bigr),\qquad
\Theta(x)=S(x)\prod_{k=1}^4\theta_3(\pi x_k|\mathrm{i}).
$$
Then
$$
\Theta(h'+y)=\operatorname{Im}\prod_{k=1}^4 g(\xi_k)=-\mathrm{i}\,\vartheta_\Lambda(y;1+\mathrm{i}).
$$
\item[(iii)] Consequently,
$$
\M=h'+\{y\in\R{4}:\ \vartheta_{\Lambda}(y;1+\mathrm{i})=0\}.
$$
\item[(iv)] The group of orthogonal transformations $L$ with $L(\Lambda)=\Lambda$ is exactly $\Oct$. In particular,
$$
\Theta\bigl(h'+L(x-h')+n\bigr)=(-1)^{n_1+n_3}\,\Theta(x)\qquad\text{for all}\ L\in\Oct,\ n\in D_4 .
$$
\end{enumerate}
\end{theorem}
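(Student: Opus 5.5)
The plan is to prove the four parts in order, with (i) as an identity of elliptic functions and (ii) as a lattice-sum computation.

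For (i) I will use the Jacobi representation recalled in Remark~\ref{rem:wp}, namely $s(t)=\sn(\sqrt2 t,k)/(\sqrt2\,\dn(\sqrt2 t,k))$ with $k=1/\sqrt2$. With $u=\theta_3(0)^2z$ the classical formulas
$$
\sn u=\frac{\theta_3(0)\theta_1(z)}{\theta_2(0)\theta_4(z)},\qquad
\dn u=\frac{\theta_4(0)\theta_3(z)}{\theta_3(0)\theta_4(z)},\qquad
k=\frac{\theta_2(0)^2}{\theta_3(0)^2}
$$
give
$$
\frac{\sn u}{\dn u}=\frac{\theta_3(0)^2}{\theta_2(0)\theta_4(0)}\,\frac{\theta_1(z)}{\theta_3(z)} .
$$
At $\tau=\mathrm{i}$, Jacobi's imaginary transformation gives $\theta_2(0|\mathrm{i})=\theta_4(0|\mathrm{i})$. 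Hence $k=1/\sqrt2$, which is the lemniscatic modulus, and the prefactor $\theta_3(0)^2/\theta_2(0)^2$ equals $\sqrt2$. This yields
$$
\frac{\theta_1(z|\mathrm{i})}{\theta_3(z|\mathrm{i})}=\frac{\sn u}{\sqrt2\,\dn u}=s\Bigl(\frac{u}{\sqrt2}\Bigr).
$$
It remains to check that $u/\sqrt2=\varpi t$ when $z=\pi t$. I will do this without computing $\theta_3(0|\mathrm{i})$: both $\sigma(t)$ and $\theta_1(\pi t)/\theta_3(\pi t)$ are odd, and on $[0,1]$ each vanishes only at $t=0$ and $t=1$. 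Since $s$ has its first positive zero at $\varpi$, this fixes the scale.

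For (ii) I will expand $\Theta(h'+y)$ directly as a lattice sum. By (i),
$$
\Theta(x)=\theta_1\theta_3\theta_1\theta_3-\theta_3\theta_1\theta_3\theta_1
$$
with arguments $\pi x_1,\dots,\pi x_4$. Each product is a sum over a coset of $\Z^4$ in $(\tfrac12\Z)^4$, for example $(\frac12+\Z)\times\Z\times(\frac12+\Z)\times\Z$ for the first one. The terms are $e^{-\pi|a|^2+2\pi\mathrm{i}\scal{a}{x}}$ times explicit signs. Substituting $x=h'+y$ turns the phases $e^{2\pi\mathrm{i}\scal{a}{h'}}$ into fourth roots of unity. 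The two cosets together make up a translate of $D_4^*$, namely $\Lambda$ after a rotation or identification.

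The key observation is that on $\Lambda$ the combined sign and fourth-root-of-unity factor equals $\mathrm{i}\,e^{\pi\mathrm{i}|v|^2}$ (up to a global constant), because $|v|^2\bmod 2$ is controlled by the coordinate sum. This converts $e^{-\pi|v|^2}$ into $e^{\pi\mathrm{i}(1+\mathrm{i})|v|^2}$. Independently, I will expand $\prod_k g(\xi_k)$ as a sum over $\eta\in\Z^4$. With $v=\tfrac12\sum\eta_kf_k$ one has $|v|^2=|\eta|^2/2$ and $\scal{v}{y}=\sum\eta_k\xi_k$, so this is the sum over all of $\tfrac12\sum\eta_kf_k$, that is over $D_4^*\cup\Lambda$. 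The weights $e^{\pi\mathrm{i}|\eta|^2/2}$ are real on the even-sum part and purely imaginary on the odd-sum part, because $|\eta|^2\equiv\sum\eta_k \pmod 2$. The whole sum is real by $\eta\mapsto-\eta$ symmetry, so its imaginary part is exactly $-\mathrm{i}\,\vartheta_\Lambda$.

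I expect the bookkeeping of signs in the first expansion to be the main obstacle. I would first do it for one term, then use the symmetry $x_1\leftrightarrow x_2$, $x_3\leftrightarrow x_4$ to get the other.

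Part (iii) follows from (ii) once $\theta_3(\pi t|\mathrm{i})>0$ for real $t$, which is immediate from the Jacobi triple product.

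For (iv), an orthogonal $L$ preserves $\Lambda$ if and only if it preserves the difference group $D_4^*$ and maps $\frac12f_1$ into $\Lambda$. So $\operatorname{O}(\Lambda)$ is the stabilizer in $\Aut(D_4^*)=W(F_4)$ of the class of $\frac12f_1$ in $\frac12D_4^*/D_4^*$. Corollary~\ref{cor:Oct}-style reasoning shows directly that $\Oct\subset\operatorname{O}(\Lambda)$, via the description $\eta\in\Z^4$ with $\sum\eta_k$ odd. To show equality I would check that $A$ (or a triality element) moves $\Lambda$ to a different translate, so the orbit has length at least $3$. Then $|\operatorname{O}(\Lambda)|\le1152/3=384$, which forces $\operatorname{O}(\Lambda)=\Oct$.

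The transformation rule for $\Theta$ then follows:
\begin{itemize}
\item invariance under $L\in\Oct$ comes from the invariance of $\vartheta_\Lambda$;
\item translations by $n\in D_4$ contribute the character $e^{2\pi\mathrm{i}\scal{v}{n}}$. For $v\in\Lambda$ this equals $e^{\pi\mathrm{i}\scal{f_1}{n}}=(-1)^{n_1+n_3}$, since $\scal{D_4^*}{D_4}\subset\Z$.
\end{itemize}
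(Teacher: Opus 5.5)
Your (ii) and (iii) follow the paper's computation: you expand the two products over the cosets $(\frac12,0,\frac12,0)+\Z^4$ and $(0,\frac12,0,\frac12)+\Z^4$ and compare with $\prod_kg(\xi_k)$ in the coordinates $\eta_k=\scal{v}{f_k}$. Your (i) is a valid alternative route. The genuine problem is the key step you propose for (iv): it fails as stated, although it is easily repaired.

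\textbf{The step in (iv) that fails.} The involution $A$ lies in $\Oct$: it fixes $f_1,f_4$ and swaps $f_2,f_3$. So $A(\Lambda)=\Lambda$, and your proposed check cannot succeed.

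``Triality'' is not the right criterion either. $\Oct$ itself induces the full $S_3$ on the three nontrivial cosets of $D_4$ in $D_4^*$: $A$ swaps $e_1+D_4$ and $h+D_4$, and $R$ swaps $h+D_4$ and $h'+D_4$. So triality elements can perfectly well preserve $\Lambda$.

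What you need is any element of $\Aut(D_4)\setminus\Oct$. For instance, take the reflection $P(x)=(x_2,x_1,x_3,x_4)\in W(D_4)$; then $P(\tfrac12f_1)=(0,\tfrac12,\tfrac12,0)\notin\Lambda$.

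Such an element only shows that the orbit has length at least $2$, not $3$. Equality $\operatorname{O}(\Lambda)=\Oct$ then follows because $\Oct\subseteq\operatorname{O}(\Lambda)\subseteq\Aut(D_4)$ and $[\Aut(D_4):\Oct]=3$ is prime, as in the proof of Theorem~\ref{thB}(b).

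The paper's argument is shorter and does not use $|W(F_4)|=1152$. The vectors of minimal norm in $\Lambda$ are the eight vectors $\pm\frac12f_k$ (those with $|\eta|^2=1$). Hence every orthogonal $L$ with $L(\Lambda)=\Lambda$ permutes $\frac12\F$, so $L\in\Oct$.

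\textbf{Part (i).} Your route through the theta quotients for $\sn$ and $\dn$ and the formula of Remark~\ref{rem:wp} works. It uses $\theta_2(0|\mathrm{i})=\theta_4(0|\mathrm{i})$ together with $\theta_2^4+\theta_4^4=\theta_3^4$ to get $k=\frac1{\sqrt2}$. Matching the first positive real zero (the real zeros of $\theta_1(\pi t|\mathrm{i})$ are exactly $t\in\Z$) is equivalent to $\theta_3(0|\mathrm{i})^2=\sqrt2\,\varpi/\pi$.

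The paper instead uses a Liouville argument. The quotient $\theta_1(\pi t)/\theta_3(\pi t)$ and $\sigma$ are elliptic with periods $1\pm\mathrm{i}$ and have the same zeros and poles, so their ratio is constant. The constant is fixed by the value at $t=\frac12$, namely $\theta_2(0|\mathrm{i})/\theta_4(0|\mathrm{i})=1$. This needs no dictionary with Jacobi functions.

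\textbf{Slips to fix in (ii).}
\begin{enumerate}
\item The sum $\prod_kg(\xi_k)$ is not real. What is real, after pairing $\eta$ with $-\eta$, is each parity-class sum taken with its real weights $\pm e^{-\pi|\eta|^2/2}$. That is why the imaginary part equals the odd-class sum.
\item There is no undetermined global constant. On both cosets the combined factor is exactly $(-1)^{|v|^2-\frac12}=-\mathrm{i}\,e^{\pi\mathrm{i}|v|^2}$; this follows from $(n+\frac12)^2\equiv\frac14$ and $m^2\equiv m\pmod 2$.
\item The union of the two cosets is literally $\Lambda$, with no rotation. A rotation would change $\scal{v}{y}$.
\item The swap $(x_1,x_2,x_3,x_4)\mapsto(x_2,x_1,x_4,x_3)$ does not fix $h'$; it moves it by $(0,0,-1,1)$. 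The resulting factor $\theta_1(z-\pi)=-\theta_1(z)$ is exactly what makes the second product, taken with its minus sign, obey the same sign rule as the first. If you ignore this, you get the wrong relative sign between the two cosets.
\end{enumerate}
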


\begin{proof}
(i) Let $\tau=\mathrm{i}$, so that $q=e^{-\pi}$, and put
$$
f(t)=\frac{\theta_1(\pi t)}{\theta_3(\pi t)} .
$$
The classical quasi-periodicity of $\theta_1$ and $\theta_3$ under $z\mapsto z+\pi$ and $z\mapsto z+\pi\tau$ gives
$$
f(t+1)=f(t+\mathrm{i})=-f(t).
$$
Thus $f$ is an elliptic function whose period lattice is generated by $1+\mathrm{i}$ and $1-\mathrm{i}$. Its zeros are the simple zeros of $\theta_1(\pi t)$, i.e. the points of $\Z[\mathrm{i}]$, and its poles are the simple zeros of $\theta_3(\pi t)$, i.e. the points of $\tfrac12(1+\mathrm{i})+\Z[\mathrm{i}]$.

The function $\sigma(t)=\sinl(\varpi t)$ is elliptic with the same periods, the same simple zeros and the same simple poles \cite{LangerSinger,Markush} (cf. Remark~\ref{rem:wp}). Hence $\sigma/f$ is an elliptic function without zeros and poles, i.e. a constant. To find the constant, recall that $\theta_1(z+\frac\pi2)=\theta_2(z)$ and $\theta_3(z+\frac\pi2)=\theta_4(z)$. Therefore
$$
f(\tfrac12)=\frac{\theta_2(0|\mathrm{i})}{\theta_4(0|\mathrm{i})}=1,
$$
where the last equality follows from Jacobi's imaginary transformation $\theta_2(0|-1/\tau)=(-\mathrm{i}\tau)^{1/2}\theta_4(0|\tau)$ at the fixed point $\tau=\mathrm{i}$. Since also $\sigma(\frac12)=1$, the constant equals $1$.

(ii) By (i), $\Theta=P_1-P_2$, where (all theta functions with $\tau=\mathrm{i}$)
$$
P_1(x)=\theta_1(\pi x_1)\theta_1(\pi x_3)\theta_3(\pi x_2)\theta_3(\pi x_4),\qquad
P_2(x)=\theta_1(\pi x_2)\theta_1(\pi x_4)\theta_3(\pi x_1)\theta_3(\pi x_3).
$$
Multiplying the series, we obtain
$$
P_1(x)=\sum_{v\in\Lambda_1}(-1)^{v_1+v_3}q^{|v|^2}e^{2\pi\mathrm{i}\scal{v}{x}},\qquad
P_2(x)=\sum_{v\in\Lambda_2}(-1)^{v_2+v_4}q^{|v|^2}e^{2\pi\mathrm{i}\scal{v}{x}},
$$
where
$$
\Lambda_1=(\tfrac12,0,\tfrac12,0)+\Z^4,\qquad \Lambda_2=(0,\tfrac12,0,\tfrac12)+\Z^4 .
$$
Write $v=\frac12\sum_k\eta_kf_k$, i.e. $\eta_k=\scal{v}{f_k}$. One checks directly that $v\in\Lambda_1$ if and only if $\eta\in\Z^4$, $\eta_1+\eta_2$ is odd and $\eta_3+\eta_4$ is even, and similarly for $\Lambda_2$ with the two parities interchanged. Thus
$$
\Lambda=\Lambda_1\sqcup\Lambda_2,\qquad |v|^2=\tfrac12|\eta|^2,\qquad \scal{v}{y}=\sum_{k=1}^4\eta_k\xi_k .
$$

Now substitute $x=h'+y$. For $v\in\Lambda_1$ we have $e^{2\pi\mathrm{i}\scal{v}{h'}}=(-1)^{v_1+v_3}(-1)^{v_2+v_4}$, so the coefficient of $e^{2\pi\mathrm{i}\scal{v}{y}}$ in $P_1(h'+y)$ equals $(-1)^{v_2+v_4}q^{|v|^2}$. For $v\in\Lambda_2$ we have $e^{2\pi\mathrm{i}\scal{v}{h'}}=-(-1)^{v_1+v_3}(-1)^{v_2+v_4}$, so the coefficient in $-P_2(h'+y)$ equals $(-1)^{v_1+v_3}q^{|v|^2}$. In both cases the sign is
$$
(-1)^{|v|^2-\frac12}=(-1)^{(|\eta|^2-1)/2},
$$
because $(n+\frac12)^2\equiv\frac14$ and $m^2\equiv m$ modulo $2$ for all integers $n,m$. Since moreover $|\eta|^2\equiv\eta_1+\dots+\eta_4 \pmod 2$, we arrive at
\begin{equation}\label{thetaeta}
\Theta(h'+y)=\sum_{\eta\in\Z^4,\ |\eta|^2\ \mathrm{odd}}(-1)^{\frac{|\eta|^2-1}{2}}e^{-\frac{\pi}{2}|\eta|^2}e^{2\pi\mathrm{i}\scal{\eta}{\xi}} .
\end{equation}

On the other hand,
$$
\prod_{k=1}^4g(\xi_k)=\sum_{\eta\in\Z^4}\mathrm{i}^{|\eta|^2}e^{-\frac\pi2|\eta|^2}e^{2\pi\mathrm{i}\scal{\eta}{\xi}}
=\sum_{\eta\in\Z^4}\mathrm{i}^{|\eta|^2}e^{-\frac\pi2|\eta|^2}\cos\bigl(2\pi\scal{\eta}{\xi}\bigr),
$$
where in the second step we have grouped $\eta$ with $-\eta$. The imaginary part of this sum is the sum over odd $|\eta|^2$, where
$$\mathrm{i}^{|\eta|^2}=\mathrm{i}\,(-1)^{(|\eta|^2-1)/2};
$$
this is exactly \eqref{thetaeta}. Finally, for $v\in\Lambda$ we have $|v|^2\in\frac12+\Z$ and hence
$$
e^{\pi\mathrm{i}(1+\mathrm{i})|v|^2}=\mathrm{i}\,(-1)^{|v|^2-\frac12}q^{|v|^2},
$$
which gives the expression through $\vartheta_\Lambda$.

(iii) For real $z$ we have $$\theta_3(z|\mathrm{i})\ge1-2\sum_{b\ge1}e^{-\pi b^2}>0,$$ so (iii) follows from (ii).

(iv) The vectors of minimal norm in $\Lambda$ are those with $|\eta|^2=1$, i.e. the eight vectors $\pm\frac12f_k$. Hence every orthogonal $L$ with $L(\Lambda)=\Lambda$ permutes $\frac12\F$, that is, $L\in\Oct$. Conversely, every $L\in\Oct$ acts on $\eta$ by a signed permutation, which preserves the parity of $\eta_1+\dots+\eta_4$; hence $L(\Lambda)=\Lambda$. Since $\vartheta_\Lambda(Ly;\tau)=\vartheta_{L^{-1}\Lambda}(y;\tau)$, (ii) shows that $\Theta(h'+L(x-h'))=\Theta(x)$. The behaviour under translations follows from $\theta_1(z+\pi)=-\theta_1(z)$ and $\theta_3(z+\pi)=\theta_3(z)$.
\end{proof}

\begin{proof}[Proof of Theorem~\ref{thD}]
The first statement is Theorem~\ref{thm:symmetric}. By Theorem~\ref{thm:theta}(ii), $-\mathrm{i}\,\vartheta_\Lambda(y;1+\mathrm{i})=\Theta(h'+y)=S(h'+y)\prod_{k=1}^4\theta_3(\pi x_k|\mathrm{i})$ with $x=h'+y$, and the product is a positive real analytic function by the proof of Theorem~\ref{thm:theta}(iii). The last statement is Theorem~\ref{thm:theta}(iv).
\end{proof}

\begin{remark}\label{rem:thetaA}
Theorem~\ref{thm:theta} explains the hidden symmetry of Theorem~\ref{thB}(c). Since $A\in\Oct$ and $Ah'=h'$, part (iv) gives $\Theta(Ax)=\Theta(x)$. Therefore the factor $Q$ in the identity $S(x)=Q(x)S(Ax)$ is
$$
Q(x)=\prod_{k=1}^4\frac{\theta_3(\pi (Ax)_k|\mathrm{i})}{\theta_3(\pi x_k|\mathrm{i})} .
$$
\end{remark}

\begin{remark}
Comparing Theorem~\ref{thm:theta}(ii) with Theorem~\ref{thm:symmetric}, one may ask for the phase of $g(t)=\theta_3(\pi t|\frac{1+\mathrm{i}}2)$. It is the function $\phi$ of Remark~\ref{rem:secondproof}:
\begin{equation}\label{phase}
\arg g(t)=\arctan\gamma^2(t)-\frac{\pi}{8}.
\end{equation}
We do not need this identity, so we only sketch the proof. Splitting the series of $g$ into the terms with even and with odd $b$, we find
$$
g(t)=\theta_3(2\pi t\,|\,2\mathrm{i})+\mathrm{i}\,\theta_2(2\pi t\,|\,2\mathrm{i}),
$$
and $\operatorname{Re}g=\theta_3(2\pi t|2\mathrm{i})>0$. Multiplying the series and substituting $m=a+b$, $n=a-b$, one obtains the classical duplication formulas
\begin{align*}
\theta_2(z|\tau)^2&=\theta_2(0|2\tau)\,\theta_3(2z|2\tau)+\theta_3(0|2\tau)\,\theta_2(2z|2\tau),\\
\theta_4(z|\tau)^2&=\theta_3(0|2\tau)\,\theta_3(2z|2\tau)-\theta_2(0|2\tau)\,\theta_2(2z|2\tau).
\end{align*}
At $z=0$, $\tau=\mathrm{i}$ they give, together with $\theta_2(0|\mathrm{i})=\theta_4(0|\mathrm{i})$, the value $\varrho:=\theta_2(0|2\mathrm{i})/\theta_3(0|2\mathrm{i})=\sqrt2-1=\tan\frac\pi8$. Since $\gamma(t)=\sigma(t+\frac12)=\theta_2(\pi t|\mathrm{i})/\theta_4(\pi t|\mathrm{i})$ by Theorem~\ref{thm:theta}(i), we obtain, with $T=\theta_2(2\pi t|2\mathrm{i})/\theta_3(2\pi t|2\mathrm{i})=\tan\arg g(t)$,
$$
\gamma^2(t)=\frac{T+\varrho}{1-\varrho T}=\tan\Bigl(\arg g(t)+\frac\pi8\Bigr),
$$
which is \eqref{phase}.
\end{remark}

\subsection{The quaternionic picture}\label{ssec:quaternionic}
We identify $\R{4}$ with the quaternions $\Ham$ as in Remark~\ref{rem:Hurwitz}, so that $x=x_1+x_2\mathrm{i}+x_3\mathrm{j}+x_4\mathrm{k}$, and we denote by $\mathcal{O}$ the Hurwitz order. Let
$$
Q_8=\{\pm1,\pm\mathrm{i},\pm\mathrm{j},\pm\mathrm{k}\}
$$
be the quaternion group, and let $2O$ be the binary octahedral group, i.e. the group of the $48$ unit quaternions consisting of the $24$ units of $\mathcal{O}$ and the $24$ elements $(\pm a\pm b)/\sqrt2$, $a\neq b\in\{1,\mathrm{i},\mathrm{j},\mathrm{k}\}$. It is well known that $2O$ is the normalizer of $Q_8$ in the group of unit quaternions, and that $2O/Q_8\cong S_3$ acts faithfully, by conjugation, on the three axes $\{\pm\mathrm{i}\}$, $\{\pm\mathrm{j}\}$, $\{\pm\mathrm{k}\}$ \cite{ConwaySmith}.

The descriptions of Sections~\ref{sec:lattices}--\ref{sec:hole} take a particularly symmetric form in the coordinate
$$
w=\frac{\bar u\,(x-h')}{\sqrt2},\qquad u=\frac{1+\mathrm{j}}{\sqrt2}\in 2O .
$$

\begin{proposition}\label{pro:quaternionic}
In the coordinate $w=w_0+w_1\mathrm{i}+w_2\mathrm{j}+w_3\mathrm{k}$ the following holds.
\begin{enumerate}
\item[(i)] \textup{(The equation.)} $x\in\M$ if and only if
$$
\sum_{\nu=0}^{3}\arctan\gamma^2(w_\nu)=\frac{\pi}{2}.
$$
\item[(ii)] \textup{(Holes and singular points.)} The holes $h'+D_4$ correspond to the Hurwitz order $\mathcal{O}$, and the singular points correspond to the three cosets
$$
\tfrac12(1+q)+\mathcal{O},\qquad q\in\{\mathrm{i},\mathrm{j},\mathrm{k}\},
$$
whose union is $\frac12(1+\mathrm{i})\mathcal{O}\setminus\mathcal{O}$. The origin $x=0$ and the points of $D_4$ correspond to $q=\mathrm{i}$, the points of $h+D_4$ to $q=\mathrm{j}$, and the points of $e_1+D_4$ to $q=\mathrm{k}$. We say that a singular point is \emph{of type $q$}.
\item[(iii)] \textup{(Tangent cones.)} At a singular point of type $q$, the tangent cone of $\M$ is the Clifford cone of the complex structure $q$: writing $\Ham=\C_q\oplus\C_q^\perp$ with $\C_q=\R{}+\R{}q$,
$$
C_q=\{w:\ |w_{\C_q}|=|w_{\C_q^\perp}|\};\qquad\text{for instance,}\quad C_{\mathrm{i}}=\{w_0^2+w_1^2=w_2^2+w_3^2\}.
$$
\item[(iv)] \textup{(Symmetries.)} In the coordinate $w$, the isometry group of $\M$ is $\mathcal{O}\rtimes G$, where $\mathcal{O}$ acts by translations and
$$
G=\{w\mapsto a\,w\,\bar b,\ \ w\mapsto a\,\bar w\,\bar b:\ \ a,b\in 2O,\ a\bar b\in Q_8\}
$$
is the stabilizer of the frame $Q_8$, a group of order $384$. The group $G$ permutes the three types of singular points through $2O\to 2O/Q_8\cong S_3$: the map $w\mapsto a w\bar b$ sends type $q$ to the type of $aq\bar a$.
\item[(v)] \textup{(The hidden symmetry.)} The involution $A$ of Theorem~\ref{thB} is the reflection
$$
w\mapsto -n\,\bar w\,n,\qquad n=\frac{\mathrm{j}+\mathrm{k}}{\sqrt2}\in 2O,
$$
which fixes $1$ and $\mathrm{i}$ and interchanges $\mathrm{j}$ and $-\mathrm{k}$. Thus $A$ fixes the type $\mathrm{i}$ of the origin and interchanges the types $\mathrm{j}$ and $\mathrm{k}$.
\end{enumerate}
\end{proposition}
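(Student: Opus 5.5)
The plan is to reduce everything to results already proved by computing the coordinate change explicitly. Put $y=x-h'$. Since $\bar u=(1-\mathrm{j})/\sqrt2$, we have $w=\frac12(1-\mathrm{j})y$. Multiplying out with $\mathrm{j}\mathrm{i}=-\mathrm{k}$ and $\mathrm{j}\mathrm{k}=\mathrm{i}$ gives
$$
w=\tfrac12\bigl((y_1+y_3)+(y_2-y_4)\mathrm{i}+(y_3-y_1)\mathrm{j}+(y_2+y_4)\mathrm{k}\bigr).
$$
In the frame coordinates \eqref{xi} this reads $w_0=\xi_1$, $w_1=\xi_4$, $w_2=-\xi_2$, $w_3=\xi_3$. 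So $w$ is a signed permutation of the frame coordinates, and $y\mapsto w$ is a similarity with factor $1/\sqrt2$.

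Part (i) is then immediate from Theorem~\ref{thm:symmetric}, because $\gamma^2$ is even and the equation \eqref{Msym} is symmetric in the four variables. Part (iv) will also be transported along this similarity. By Theorem~\ref{thB}(b), or equivalently Corollary~\ref{cor:Oct} together with Theorem~\ref{thm:theta}(iv), the linear part of the isometry group is $\Oct$. In the $w$-coordinate, $\Oct$ becomes the group of signed permutations of the basis $1,\mathrm{i},\mathrm{j},\mathrm{k}$, that is, the stabilizer of the frame $Q_8$. The translation lattice $D_4$ becomes $\frac12(1-\mathrm{j})(1+\mathrm{i})\mathcal{O}$. Since $(1-\mathrm{j})\mathcal{O}=(1+\mathrm{i})\mathcal{O}$ is the two-sided ideal of norm $2$ and $(1+\mathrm{i})^2\mathcal{O}=2\mathcal{O}$ up to a unit, this lattice is exactly $\mathcal{O}$. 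The holes, which lie in $h'+D_4$, therefore correspond to $w\in\mathcal{O}$; this is the first half of (ii).

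For the rest of (ii) and for (iii), I will evaluate the coordinate change on representatives. For $x=0$ we have $y=-h'$, hence $\xi=(-\frac12,0,0,-\frac12)$ and $w=-\frac12(1+\mathrm{i})\in\frac12(1+\mathrm{i})+\mathcal{O}$. The same computation for $x=h$ and $x=e_1$ lands in $\frac12(1+\mathrm{j})+\mathcal{O}$ and $\frac12(1+\mathrm{k})+\mathcal{O}$, respectively. Translation invariance by $D_4\leftrightarrow\mathcal{O}$ then gives the whole correspondence. The union of the three cosets equals $\frac12(1+\mathrm{i})\mathcal{O}\setminus\mathcal{O}$, since $\frac12(1+\mathrm{i})\mathcal{O}/\mathcal{O}$ has order $4$.

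For (iii), I will use the separable function $u=\sum\phi(w_\nu)$ of Remark~\ref{rem:secondproof}, rather than Lemma~\ref{lem:expansions}. At a point of type $\mathrm{i}$, the coordinates $w_0,w_1$ are half-integers, so $\gamma^2=0$ there and $\phi=-\pi/8$. The coordinates $w_2,w_3$ are integers, so $\phi=\pi/8$. The Hessian of $u$ is diagonal with entries $\phi''=-2\varpi^2\sin4\phi$, namely $(+,+,-,-)$ times $2\varpi^2$. The tangent cone (unique, by Theorem~\ref{thA}(d)) is therefore $w_0^2+w_1^2=w_2^2+w_3^2$. The types $\mathrm{j}$ and $\mathrm{k}$ work the same way, with the half-integer pair $\{w_0,w_2\}$ or $\{w_0,w_3\}$.

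For (iv), I will use the classical description of $O(4)$ on $\Ham$: every element is $w\mapsto aw\bar b$ or $w\mapsto a\bar w\bar b$ with unit quaternions $a,b$. Such a map preserves $Q_8$ as a set exactly when the image of $1$, namely $a\bar b$, lies in $Q_8$, and the induced map $w\mapsto aw\bar a$ normalizes $Q_8$, which forces $a\in2O$. Counting gives $48\cdot 8$ pairs $(a,b)$, divided by $2$ for $(a,b)\sim(-a,-b)$, and doubled for the conjugate maps, for a total of $384$. On types, $w\mapsto aw\bar b=(awa^{-1})(a\bar b)$. Multiplication by $a\bar b\in Q_8$ preserves each coset $\frac12(1+q)+\mathcal{O}$ up to the lattice, so the type map is $q\mapsto aq\bar a$. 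The lattice bookkeeping here is the main obstacle: one has to check that left multiplication by elements of $Q_8$ permutes the three cosets trivially. I would verify this on generators. Finally, (v) is a direct check that $w\mapsto-n\bar wn$ reproduces the matrix $A$ after conjugation by the coordinate change. It suffices to compute the images of $1,\mathrm{i},\mathrm{j},\mathrm{k}$: conjugation by $n$ fixes $1$ and $\mathrm{i}$ and sends $\mathrm{j}$ to $-\mathrm{k}$. The matrix $A$ fixes $h'$ and swaps $h$ and $e_1$, which matches swapping types $\mathrm{j}$ and $\mathrm{k}$.
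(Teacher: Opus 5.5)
Your proposal is correct. For (i), (ii), (iv) and (v) it follows the paper's argument:
\begin{itemize}
\item the same coordinate change $(\xi_1,\xi_2,\xi_3,\xi_4)=(w_0,-w_2,w_3,w_1)$;
\item the same ideal computation $\tfrac12(1-\mathrm{j})D_4=\mathcal{O}$ (the paper writes $(1+\mathrm{j})^{-1}$);
\item the same representative $w=-\tfrac12(1+\mathrm{i})$ of the origin;
\item the same description of the stabilizer of $Q_8$, with the count $2\cdot48\cdot8/2=384$.
\end{itemize}

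The genuine difference is in (iii). The paper transports the quadratic part of $S$ at the origin (Lemma~\ref{lem:expansions}(i)) through the coordinate change. It gets the types $\mathrm{j},\mathrm{k}$ in the same way or by symmetry. You instead use the hole-centred function $u=\sum_\nu\phi(w_\nu)$. Its Hessian at a singular point is diagonal, with entries $\pm2\varpi^2$ determined by which coordinates are half-integers. This treats the three types uniformly. It also shows directly that the type of a singular point is the partition of $\{w_0,w_1,w_2,w_3\}$ into a half-integer pair and an integer pair.

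Your route needs one more line, because Theorem~\ref{thA} identifies the tangent cone with the zero set of the quadratic part of $S$, not of $u$. Since $\sum_k\beta_k=u+\frac\pi2$, \eqref{Ssym} says that $S(h'+y)$ equals $\sin u$ divided by the two cosines in its denominator, each at least $1/\sqrt2$. Hence $S(h'+\cdot)=g\,u$ with $g>0$ real analytic, and $D^2S=g\,D^2u$ at a common critical zero. Alternatively, apply Lemma~\ref{lem:Morse} to $u$ directly, as in Proposition~\ref{pro:sections}(iv).

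Several slips need correcting.
\begin{itemize}
\item In (iii), only half of the points of type $\mathrm{i}$ have $w_0,w_1\in\frac12+\Z$. For example, $x=e_2+e_3\in D_4$ gives $w=\frac12(\mathrm{j}+\mathrm{k})$. On that half the Hessian has the opposite signs, which does not change the cone.
\item In (iv), $c=a\bar b$ acts by right multiplication in $(awa^{-1})c$, not left. The check that it fixes each coset is one line: $c-1\in(1+q')\mathcal{O}$ and $(1+q')^2\mathcal{O}=2\mathcal{O}$, so no verification on generators is needed.
\item More importantly in (iv), you must say why the whole coset $\frac12(1+q)+\mathcal{O}$ moves, not just its representative. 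One reason is $a\mathcal{O}\bar a=\mathcal{O}$ for $a\in2O$, because conjugation by $2O$ permutes the $24$ Hurwitz units. Another is that $w\mapsto aw\bar b$ is the linear part of an isometry fixing the hole $0$, so it maps the set of holes $\mathcal{O}$ onto itself. Note also that $\frac12(1\pm q)$ lie in the same coset, so the sign of $aq\bar a$ is irrelevant.
\item In (v), the map $w\mapsto-n\bar wn=n\bar wn^{-1}$ is conjugation by $n$ composed with $w\mapsto\bar w$. Conjugation by $n$ alone sends $\mathrm{i}\mapsto-\mathrm{i}$ and $\mathrm{j}\mapsto\mathrm{k}$. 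The images you list are nevertheless the correct ones for $-n\bar wn$.
\item Corollary~\ref{cor:Oct} together with Theorem~\ref{thm:theta}(iv) only shows that $\Oct$ is contained in the group of linear parts. The reverse inclusion needs the index-three argument in the proof of Theorem~\ref{thB}(b), so these results are not an equivalent substitute for it.
\end{itemize}
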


\begin{proof}
Write $y=x-h'=\sqrt2\,u w$. Since left multiplication by the unit $u$ is an orthogonal map with adjoint $w\mapsto\bar u w$, we have $\xi_k=\frac12\scal{y}{f_k}=\frac{1}{\sqrt2}\scal{w}{\bar uf_k}$. A direct computation gives
$$
\bar uf_1=\sqrt2,\qquad \bar uf_2=-\sqrt2\,\mathrm{j},\qquad \bar uf_3=\sqrt2\,\mathrm{k},\qquad \bar uf_4=\sqrt2\,\mathrm{i},
$$
hence
\begin{equation}\label{xiw}
(\xi_1,\xi_2,\xi_3,\xi_4)=(w_0,-w_2,w_3,w_1).
\end{equation}
In particular, the frame $\frac1{\sqrt2}\F$ is mapped onto $Q_8$. Now (i) follows from Theorem~\ref{thm:symmetric}, because $\gamma^2$ is even.

(ii) Recall that $D_4^*=\mathcal{O}$ and $D_4=(1+\mathrm{i})\mathcal{O}=(1+\mathrm{j})\mathcal{O}$, a two-sided ideal of the Hurwitz order $\mathcal{O}$; the two generators differ by a unit of $\mathcal{O}$. Since $h'\in\mathcal{O}$ and $\bar u/\sqrt2=(1+\mathrm{j})^{-1}$, the holes $h'+D_4$ correspond to $(1+\mathrm{j})^{-1}D_4=\mathcal{O}$, and the singular set $\Sing=\mathcal{O}\setminus(h'+D_4)$ corresponds to
$$
(1+\mathrm{j})^{-1}\bigl(\mathcal{O}\setminus(1+\mathrm{j})\mathcal{O}\bigr)=(1+\mathrm{j})^{-1}\mathcal{O}\setminus\mathcal{O}=\tfrac12(1+\mathrm{i})\mathcal{O}\setminus\mathcal{O}.
$$
The lattice $\frac12(1+\mathrm{i})\mathcal{O}$ contains $\mathcal{O}$ with index $4$, and the three nontrivial cosets are represented by $\frac12(1+q)$, $q\in\{\mathrm{i},\mathrm{j},\mathrm{k}\}$. For example, the origin $x=0$ corresponds to $w=-\frac12(1+\mathrm{i})$; the remaining assignments are checked in the same way.

(iii) By Lemma~\ref{lem:expansions}(i) and \eqref{xiw}, the tangent cone at the origin is
$$
x_1x_3=x_2x_4\iff \xi_1^2-\xi_2^2-\xi_3^2+\xi_4^2=0\iff w_0^2+w_1^2=w_2^2+w_3^2,
$$
which is $C_{\mathrm{i}}$. The cases $q=\mathrm{j},\mathrm{k}$ follow in the same way from Lemma~\ref{lem:expansions}, or from the symmetry in (iv).

(iv) By Theorem~\ref{thB}(b) and \eqref{xiw}, in the coordinate $w$ the isometry group of $\M$ consists of the translations by the image of $D_4$, which is $\mathcal{O}$, composed with the orthogonal maps preserving $Q_8$. A rotation $w\mapsto a w\bar b$ preserves $Q_8$ if and only if $c=a\bar b\in Q_8$ and $w\mapsto awa^{-1}=(aw\bar b)\bar c$ preserves $Q_8$, i.e. if and only if $a\in 2O$ and $c\in Q_8$; then also $b=\bar c a\in2O$. The orientation reversing maps are obtained by composing with $w\mapsto\bar w$, which preserves $Q_8$. This gives the description of $G$, and $|G|=2\cdot 48\cdot 8/2=384$. Finally, $a(1+q)\bar b=(1+aq\bar a)\,c$, and for $c\in Q_8$ the difference $\frac12(1+q')(c-1)$ lies in $\mathcal{O}$, because $c-1\in(1+q')\mathcal{O}$ and $(1+q')^2\mathcal{O}=2\mathcal{O}$. Moreover, $a\mathcal{O}\bar b=(a\mathcal{O}a^{-1})\,c=\mathcal{O}$, because conjugation by an element of $2O$ permutes the $24$ units of $\mathcal{O}$, which span $\mathcal{O}$. Hence $w\mapsto aw\bar b$ maps the coset $\frac12(1+q)+\mathcal{O}$ onto $\frac12(1+aq\bar a)+\mathcal{O}$; note that $\frac12(1+q)$ and $\frac12(1-q)$ lie in the same coset. The conjugation $w\mapsto\bar w$ fixes each coset.

(v) The map $A$ fixes $h'$, $f_1$ and $f_4$ and interchanges $f_2$ and $f_3$. By \eqref{xiw} it fixes $1$ and $\mathrm{i}$ and maps $-\mathrm{j}$ to $\mathrm{k}$, i.e. it interchanges $\mathrm{j}$ and $-\mathrm{k}$. This is the reflection in the hyperplane orthogonal to $n=(\mathrm{j}+\mathrm{k})/\sqrt2$, which is $w\mapsto w-2\scal{w}{n}n=-n\bar wn$.
\end{proof}

\begin{remark}\label{rem:quaternionic}
Proposition~\ref{pro:quaternionic} gives an intrinsic meaning to the ``two kinds'' of singular points in Theorem~\ref{thB}: there are in fact three equivalent types, labelled by the imaginary units $\mathrm{i},\mathrm{j},\mathrm{k}$, and the tangent cone at a point of type $q$ is the Clifford cone of the complex structure $q$. The isometry group acts on the types through the octahedral group, i.e. through $S_3$, and the hidden symmetry $A$ is simply the transposition of the types $\mathrm{j}$ and $\mathrm{k}$. In the same coordinate, the function $\Theta$ of Theorem~\ref{thm:theta} takes the form
\begin{equation}\label{thetaLipschitz}
\Theta\bigl(h'+\sqrt2\,u\,w\bigr)=\sum_{\eta\in\mathbb{L}}\chi_{-4}\bigl(N(\eta)\bigr)\,e^{-\frac{\pi}{2}N(\eta)}\cos\bigl(2\pi\operatorname{Re}(\eta\bar w)\bigr),
\end{equation}
where $\mathbb{L}=\Z+\Z\mathrm{i}+\Z\mathrm{j}+\Z\mathrm{k}$ is the \emph{Lipschitz order}, $N(\eta)=\eta\bar\eta=|\eta|^2$ is the norm, and $\chi_{-4}$ is the nontrivial Dirichlet character modulo $4$, i.e. $\chi_{-4}(n)=(-1)^{(n-1)/2}$ for odd $n$ and $\chi_{-4}(n)=0$ for even $n$. Indeed, by \eqref{xiw} the frame coordinates $\xi$ of $y=\sqrt2\,uw$ are obtained from the coordinates of $w$ by a signed permutation $P$, so that $\scal{\eta}{\xi}=\scal{P^{T}\eta}{w}$. Substituting this into \eqref{thetaeta}, replacing $\eta$ by $P^T\eta$ (which preserves $\Z^4$ and $|\eta|^2$), grouping $\eta$ with $-\eta$, and noting that $\scal{\eta}{w}=\operatorname{Re}(\eta\bar w)$, we obtain \eqref{thetaLipschitz}. We are not aware of an occurrence of this particular theta series in the literature; note only that its value at $w=0$ is $\sum_{n\ge1}\chi_{-4}(n)\,r_4(n)\,e^{-\pi n/2}$, where $r_4(n)$ is the number of representations of $n$ as a sum of four squares. We do not know whether this quaternionic structure reflects a deeper principle; see Question~\ref{q:quaternionic}.
\end{remark}

\section{Symmetries: proof of Theorem~\ref{thB}}\label{sec:sym}

\subsection{Translations}
We begin by determining the translational symmetries of $\M$.

\begin{proposition}\label{pro:transl}
A translation $x\mapsto x+v$ maps $\M$ onto itself if and only if $v\in D_4$. Moreover, $S(x+n)=(-1)^{n_1+n_3}S(x)$ for all $n\in D_4$.
\end{proposition}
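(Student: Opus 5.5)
The "if" part and the sign formula come straight from \eqref{transl}. For $n\in D_4$ we have $n_1+n_3\equiv n_2+n_4 \pmod 2$, so both terms pick up the same factor $(-1)^{n_1+n_3}$. This gives $S(x+n)=(-1)^{n_1+n_3}S(x)$, and hence $\M+n=\M$.

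For the converse, suppose $\M+v=\M$. The translation is an isometry preserving $\M$, so it must preserve the singular set: $\Mreg$ consists of manifold points, while by Theorem~\ref{thA}(c) $\M$ is not a topological manifold near any point of $\Sing$. Hence $\Sing+v=\Sing$. Since $0\in\Sing$, we get $v\in\Sing=D_4^*\setminus(h'+D_4)$. It remains to exclude the two cosets $e_1+D_4$ and $h+D_4$. Because $D_4$ already acts, it suffices to show that $v=e_1$ and $v=h$ do not preserve $\M$.

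For $v=e_1$, \eqref{transl} gives $S(x+e_1)=-\sigma(x_1)\sigma(x_3)-\sigma(x_2)\sigma(x_4)$. Take a point of $\M$ with $\sigma(x_1)\sigma(x_3)=\sigma(x_2)\sigma(x_4)\neq0$, for instance $x=(t,t,t,t)$ with $t\notin\Z$. Then $S(x+e_1)=-2\sigma(t)^2\neq0$, so $x+e_1\notin\M$.

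For $v=h$, I see two options. The first is to find an explicit point $x\in\M$ with $x+h\notin\M$. Again taking $x=(t,t,t,t)\in\M$, we get $S(x+h)=\sigma(t+\tfrac12)^2-\sigma(t+\tfrac12)^2=0$, so this point fails. Instead I would use $x=(t,0,s,0)$ with $\sigma(t)\sigma(s)=0$, say $x=(0,0,s,0)\in\M$. Then $S(x+h)=\sigma(\tfrac12)\sigma(s+\tfrac12)-\sigma(\tfrac12)^2=\gamma(s)-1$, which is nonzero for $s\notin\Z$. Indeed $x=se_3$ lies on the skeleton plane $x_1=0,x_2=0$. The second option, a cleaner invariance argument, would use the differing tangent-cone types: translation by $h$ maps $0\in\Z^4$ to $h$, but $d(\text{translation})=\Id$ would have to carry $C_0=\{x_1x_3=x_2x_4\}$ onto $C_h=\{x_1^2+x_3^2=x_2^2+x_4^2\}$, and these are different cones (e.g. $e_1\in C_0$ but $e_1\notin C_h$). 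This second option also settles $e_1$, since $C_{e_1}=\{x_1x_3=-x_2x_4\}\neq C_0$. Because tangent cones at $y$ are determined by $\M$ near $y$ (Theorem~\ref{thA}(d)), a translation preserving $\M$ must satisfy $C_{y+v}=C_y$. I would present this cone argument as the main proof and keep the explicit points as a check.

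The only delicate step is justifying that a translation preserving $\M$ must map $\Sing$ to $\Sing$ and match tangent cones. Both follow from the intrinsic characterizations in Theorem~\ref{thA}(c),(d): the non-manifold points, and the unique tangent cone of the varifold. So no real obstacle is expected.
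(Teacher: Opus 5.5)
Your proof is correct, but the converse takes a different route from the paper's.

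The paper works only with the defining function. The skeleton planes $\{x_1=x_2=0\}$ and $\{x_3=x_4=0\}$ lie in $\M$, so their translates by $v$ must lie in $\M$ as well. This gives $\sigma(v_1)\sigma(v_3+t_3)=\sigma(v_2)\sigma(v_4+t_4)$ for all $t_3,t_4$. Hence $v_1,v_2\in\Z$, and in the same way $v_3,v_4\in\Z$. Then $h\in\M$ together with \eqref{transl} gives $(-1)^{v_1+v_3}=(-1)^{v_2+v_4}$, i.e. $v\in D_4$.

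You proceed in two steps instead:
\begin{enumerate}
\item[(1)] You confine $v$ to $\Sing$ using the topological characterization of the non-manifold points from Theorem~\ref{thA}(c).
\item[(2)] You exclude the cosets $e_1+D_4$ and $h+D_4$. You do this either by explicit witnesses (both $(t,t,t,t)$ for $e_1$ and $s e_3$ for $h$ check out) or by comparing the tangent cones $C_0$, $C_{e_1}$ and $C_h$.
\end{enumerate}
There is no circularity, since Theorem~\ref{thA} is proved without this proposition.

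The trade-off is as follows. The paper's argument is more elementary and self-contained. It uses only that $\sigma$ vanishes exactly on $\Z$ and is nonconstant; it does not need the analytic Morse lemma, local homology or varifold tangent cones.

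Your argument is more intrinsic. It is essentially the mechanism the paper itself uses later for arbitrary isometries, namely the inclusion $\Iso\subset G_\Sigma$ before Lemma~\ref{lem:Gsigma}. The tangent-cone matching $C_{g(y)}=dg(C_y)$ is an invariant that also constrains isometries with nontrivial linear part, which the purely function-based argument does not directly provide.
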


\begin{proof}
The second statement is \eqref{transl}, and it shows that $D_4$ preserves $\M$. Conversely, suppose that $\M+v=\M$. The plane $\{x_1=x_2=0\}$ lies in $\M$, hence $S(v_1,v_2,v_3+t_3,v_4+t_4)=0$ for all $t_3,t_4\in\R{}$, i.e.
$$
\sigma(v_1)\sigma(v_3+t_3)=\sigma(v_2)\sigma(v_4+t_4)\qquad\text{for all}\ t_3,t_4 .
$$
Since $\sigma$ is not constant, this forces $\sigma(v_1)=\sigma(v_2)=0$, i.e. $v_1,v_2\in\Z$. Using the plane $\{x_3=x_4=0\}\subset\M$ in the same way we obtain $v_3,v_4\in\Z$. Thus $v\in\Z^4$, and since $h\in\M$, \eqref{transl} gives $0=S(h+v)=(-1)^{v_1+v_3}-(-1)^{v_2+v_4}$, i.e. $v\in D_4$.
\end{proof}

\subsection{The addition theorem and the stabilizer of the origin}
Let $\xi_i$ be the frame coordinates \eqref{xi}, and put
\begin{equation}\label{alpha}
\alpha_i(x)=\sigma^2(\xi_i(x))\in[0,1],\qquad i=1,\dots,4,
\end{equation}
and
$$
\varphi(a,b)=\frac{a-b}{1+ab},\qquad a,b\ge0 .
$$
Since $x_1=\xi_1+\xi_2$ and $x_3=\xi_1-\xi_2$, the addition theorem \eqref{additionN} gives $\sigma(x_1)\sigma(x_3)=\varphi(\alpha_1,\alpha_2)$, and similarly $\sigma(x_2)\sigma(x_4)=\varphi(\alpha_3,\alpha_4)$. Hence
\begin{equation}\label{SE}
S(x)=E(\alpha_1,\alpha_2,\alpha_3,\alpha_4),\qquad E(a_1,a_2,a_3,a_4):=\varphi(a_1,a_2)-\varphi(a_3,a_4).
\end{equation}
The following elementary identity is the source of the hidden symmetry of $\M$.

\begin{lemma}\label{lem:identity}
For all $a,b,c,d\ge0$,
$$
\varphi(a,b)-\varphi(c,d)=\frac{(1+ac)(1+bd)}{(1+ab)(1+cd)}\,\bigl(\varphi(a,c)-\varphi(b,d)\bigr).
$$
\end{lemma}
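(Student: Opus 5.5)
This is a purely algebraic identity of rational functions, so the plan is to clear denominators and compare numerators. Since $a,b,c,d\ge0$, all four factors $1+ab$, $1+cd$, $1+ac$, $1+bd$ are positive, and the identity is equivalent to
$$
(a-b)(1+cd)-(c-d)(1+ab)=(a-c)(1+bd)-(b-d)(1+ac).
$$
To see the equivalence, multiply the left-hand side of the lemma by $(1+ab)(1+cd)$. On the right-hand side, the prefactor $(1+ac)(1+bd)$ cancels the denominators of $\varphi(a,c)$ and $\varphi(b,d)$ after the same multiplication.

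Next I would expand both sides. The left side is
$$
a-b-c+d+acd-bcd-abc+abd.
$$
The right side is
$$
a-c-b+d+abd-bcd-abc+acd.
$$
These agree term by term: the linear parts are both $a-b-c+d$, and both cubic parts are the same sum of the four monomials $acd$, $abd$, $-abc$, $-bcd$. This proves the lemma. A useful observation is that the numerator $a-b-c+d+abd+acd-abc-bcd$ is symmetric under $b\leftrightarrow c$, and that symmetry is exactly what the lemma expresses.

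There is no real obstacle here. The only points needing care are the positivity of the denominators, guaranteed by $a,b,c,d\ge0$, and keeping track of signs in the expansion. A more conceptual alternative would be to note that both sides equal
$$
\frac{N(a,b,c,d)}{(1+ab)(1+cd)}
$$
with $N$ the common symmetric numerator. The direct expansion is short enough that I would simply present it.
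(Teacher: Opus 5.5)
Your proof is correct and is essentially the paper's own argument: multiply through by the positive denominators and check that both sides reduce to $a-b-c+d+acd-bcd-abc+abd$. Your expansions are right, and so is your remark that this numerator is symmetric under $b\leftrightarrow c$.
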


\begin{proof}
After multiplication by the denominators, both sides become
$a-b-c+d+acd-bcd-abc+abd$.
\end{proof}

Let $\Pi\subset S_4$ be the group of permutations of $\{1,2,3,4\}$ which preserve the partition $\{\{1,4\},\{2,3\}\}$; it has order $8$ and is generated by the permutations $(23)$, $(12)(34)$ and $(13)(24)$.
Using the notation
$$
a^\pi=(a_1,\dots,a_4)^\pi=(a_{\pi(1)},\dots,a_{\pi(4)}),\qquad \pi\in S_4,
$$
we have the following.

\begin{corollary}\label{cor:E}
For every $\pi\in\Pi$ there is a continuous function $q_\pi>0$ on $[0,1]^4$, with values in $[\frac14,4]$, such that $E(a^\pi)=\pm q_\pi(a)E(a)$, the sign depending only on $\pi$.
\end{corollary}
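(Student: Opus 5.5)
The plan is to verify the claim on the three generators $(23)$, $(12)(34)$, $(13)(24)$ of $\Pi$, and then pass to products. On products the factors multiply; to keep them inside $[\frac14,4]$ I will check each element of $\Pi$ directly.

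\emph{The two easy generators.} Since $\varphi(b,a)=-\varphi(a,b)$, the permutation $(12)(34)$ gives
\[
E(a_2,a_1,a_4,a_3)=-E(a),
\]
with $q=1$ and sign $-1$. The permutation $(13)(24)$ gives
\[
E(a_3,a_4,a_1,a_2)=-E(a),
\]
again with $q=1$ and sign $-1$. Their product is $(14)(23)$, which has sign $+1$ and $q=1$. So the Klein four-group $V=\{1,(12)(34),(13)(24),(14)(23)\}$ acts with $q\equiv1$.

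\emph{The generator $(23)$.} Here $E(a^{(23)})=\varphi(a_1,a_3)-\varphi(a_2,a_4)$. Lemma~\ref{lem:identity} with $(a,b,c,d)=(a_1,a_2,a_3,a_4)$ gives
\[
E(a)=\frac{(1+a_1a_3)(1+a_2a_4)}{(1+a_1a_2)(1+a_3a_4)}\,E(a^{(23)}),
\]
so
\[
q_{(23)}(a)=\frac{(1+a_1a_2)(1+a_3a_4)}{(1+a_1a_3)(1+a_2a_4)},
\]
with sign $+1$. On $[0,1]^4$ each factor $1+a_ia_j$ lies in $[1,2]$, so the numerator and the denominator both lie in $[1,4]$. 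Hence $q_{(23)}\in[\frac14,4]$, and $q_{(23)}$ is continuous and positive.

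\emph{The remaining elements.} The other elements of $\Pi$ are the cosets $(23)v$ with $v\in V$. For these, $q_{(23)v}(a)$ equals $q_{(23)}$ evaluated at a permuted argument (or $q_{(23)}(a)$ itself, depending on the order of composition). In either case it is again a ratio of two products of two factors from $[1,2]$, so it stays in $[\frac14,4]$; the bound does not compound.

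The sign is a homomorphism on $\Pi$, so it depends only on $\pi$. The only step needing care is the bookkeeping of composition order, $E(a^{\pi\rho})$ versus $E((a^\pi)^\rho)$. Since $V$ contributes $q=1$, writing every element as $v$ or $(23)v$ settles it.
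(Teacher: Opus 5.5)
Your proof is correct and follows the paper's argument: both treat $(12)(34)$ and $(13)(24)$ directly (each with $q=1$ and sign $-1$), handle $(23)$ by Lemma~\ref{lem:identity}, and keep the bound in $[\frac14,4]$ because $q_{(23)}$ is invariant under the Klein group $V$. This invariance is the paper's remark that $q(a^{(12)(34)})=q(a^{(13)(24)})=q(a)$, so that $q_\pi\in\{1,q,1/q\}$. Your decomposition $\Pi=V\sqcup(23)V$, combined with the composition rule $q_{\pi\rho}(a)=q_\pi(a)\,q_\rho(a^\pi)$ (which also removes your hedge about the order of composition), is simply a more explicit version of the paper's ``inspecting the products''.
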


\begin{proof}
Directly from the definition, $E(a^{(12)(34)})=-E(a)$ and $E(a^{(13)(24)})=-E(a)$. By Lemma~\ref{lem:identity},
$E(a)=q(a)\,E(a^{(23)})$ with $q(a)=\frac{(1+a_1a_3)(1+a_2a_4)}{(1+a_1a_2)(1+a_3a_4)}$, and $\frac14\le q\le4$ on $[0,1]^4$. Since the three permutations generate $\Pi$ and $E(a^{\pi\pi'})$ is obtained by applying the rule for $\pi$ and then for $\pi'$, the statement follows; the bounds for $q_\pi$ follow by inspecting the products (note that $q(a^{(12)(34)})=q(a^{(13)(24)})=q(a)$, so $q_\pi\in\{1,q,1/q\}$).
\end{proof}

Let $\Gamma_0$ be the subgroup of $\Oct$ formed by the maps \eqref{Lpi} with $\pi\in\Pi$ and arbitrary signs $\varepsilon_i$; equivalently, $\Gamma_0$ consists of the $L\in\Oct$ which preserve the partition $\F=\{\pm f_1,\pm f_4\}\sqcup\{\pm f_2,\pm f_3\}$. Clearly $|\Gamma_0|=2^4\cdot 8=128$.

\begin{proposition}\label{pro:Gamma0}
For every $L\in\Gamma_0$ one has $S(Lx)=\pm Q_L(x)S(x)$ with a real analytic function $Q_L$ such that $\frac14\le Q_L\le4$. In particular, $\Gamma_0$ preserves $\M$. The matrix $A$ of \eqref{Amatrix} belongs to $\Gamma_0$, and
\begin{equation}\label{hidden}
S(x)=Q(x)\,S(Ax),\qquad Q(x)=\frac{(1+\alpha_1\alpha_3)(1+\alpha_2\alpha_4)}{(1+\alpha_1\alpha_2)(1+\alpha_3\alpha_4)} .
\end{equation}
\end{proposition}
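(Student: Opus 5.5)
The plan is to reduce the statement to Corollary~\ref{cor:E} via the representation \eqref{SE}, $S(x)=E(\alpha_1,\dots,\alpha_4)$ with $\alpha_i=\sigma^2(\xi_i(x))$. Let $L\in\Gamma_0$ be given by \eqref{Lpi} with $\pi\in\Pi$ and signs $\varepsilon_i$. By \eqref{xiL}, $\xi_{\pi(i)}(Lx)=\varepsilon_i\xi_i(x)$. Since $\sigma^2$ is even, this gives $\alpha_{\pi(i)}(Lx)=\alpha_i(x)$, so $\alpha(Lx)=\alpha(x)^{\pi^{-1}}$ in the notation $a^\pi$, and $\pi^{-1}\in\Pi$. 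Corollary~\ref{cor:E} then yields
$$
S(Lx)=E\bigl(\alpha(x)^{\pi^{-1}}\bigr)=\pm q_{\pi^{-1}}(\alpha(x))\,S(x).
$$
Put $Q_L(x)=q_{\pi^{-1}}(\alpha(x))$. From the proof of Corollary~\ref{cor:E}, $q_{\pi^{-1}}\in\{1,q,1/q\}$, where $q(a)=\frac{(1+a_1a_3)(1+a_2a_4)}{(1+a_1a_2)(1+a_3a_4)}$ is a rational function with positive denominator on $[0,1]^4$. Since $\alpha$ is real analytic, $Q_L$ is real analytic, and it lies in $[\frac14,4]$ because $\alpha(x)\in[0,1]^4$. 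Hence $S(Lx)=0$ if and only if $S(x)=0$, so $L(\M)=\M$.

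One point needs care: the composition rule for $E(a^{\pi\pi'})$ must be compatible with how $\pi$ acts on coordinates, but the statement only requires some $q_\pi$ in $\{1,q,1/q\}$. Also, $q$ is invariant under $(12)(34)$ and $(13)(24)$ but under $(23)$ it is sent to $1/q$, and Corollary~\ref{cor:E} already handles this.

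It remains to treat $A$. Compute $Af_k$ from \eqref{Amatrix}:
\begin{itemize}
\item $A(e_1+e_3)=(1,0,1,0)=f_1$;
\item $A(e_1-e_3)=(0,1,0,1)=f_3$;
\item $A(e_2+e_4)=(1,0,-1,0)=f_2$;
\item $A(e_2-e_4)=(0,1,0,-1)=f_4$.
\end{itemize}
So $A$ fixes $f_1$ and $f_4$ and swaps $f_2$ and $f_3$. It is therefore in $\Oct$ with $\pi=(23)\in\Pi$ and all $\varepsilon_i=1$, so $A\in\Gamma_0$.

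Then $\alpha(Ax)=(\alpha_1,\alpha_3,\alpha_2,\alpha_4)$, where $\alpha_i=\alpha_i(x)$, so $S(Ax)=\varphi(\alpha_1,\alpha_3)-\varphi(\alpha_2,\alpha_4)$. Lemma~\ref{lem:identity} with $(a,b,c,d)=(\alpha_1,\alpha_2,\alpha_3,\alpha_4)$ gives exactly
$$
S(x)=\frac{(1+\alpha_1\alpha_3)(1+\alpha_2\alpha_4)}{(1+\alpha_1\alpha_2)(1+\alpha_3\alpha_4)}\,S(Ax),
$$
which is \eqref{hidden}, with the plus sign.

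The main obstacle is bookkeeping rather than substance. One must check that the index action of $L$ on $\alpha$ lands in $\Pi$ (it does, since $\Pi$ is a group), and confirm the explicit matrix computation identifying $A$ with the transposition $(23)$.
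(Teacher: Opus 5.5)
Your proof is correct and follows the paper's argument essentially step for step. You reduce the general case to Corollary~\ref{cor:E} via $\alpha(Lx)=\alpha(x)^{\pi^{-1}}$, identify $A$ with the transposition $(23)$ of the frame vectors, and obtain \eqref{hidden} by applying Lemma~\ref{lem:identity} to $(\alpha_1,\alpha_2,\alpha_3,\alpha_4)$; this is exactly the paper's proof, including its ``direct check'' of \eqref{hidden}.
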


\begin{proof}
Let $L$ be given by \eqref{Lpi}. By \eqref{xiL} and since $\sigma^2$ is even, $\alpha_{\pi(i)}(Lx)=\alpha_i(x)$, that is, $\alpha(Lx)=\alpha(x)^{\pi^{-1}}$. Hence, by \eqref{SE} and Corollary~\ref{cor:E}, $S(Lx)=E(\alpha(x)^{\pi^{-1}})=\pm q_{\pi^{-1}}(\alpha(x))S(x)$. The function $Q_L=q_{\pi^{-1}}\circ\alpha$ is real analytic, because the $\alpha_i$ are real analytic and the denominators do not vanish.

A direct computation gives $Af_1=f_1$, $Af_2=f_3$, $Af_3=f_2$, $Af_4=f_4$; thus $A\in\Gamma_0$ with $\pi=(23)$ and $\varepsilon_i=1$. In this case the proof of Corollary~\ref{cor:E} gives \eqref{hidden}. (Alternatively, \eqref{hidden} can be checked directly: $(Ax)_1=\xi_1+\xi_3$, $(Ax)_3=\xi_1-\xi_3$, $(Ax)_2=\xi_2+\xi_4$, $(Ax)_4=\xi_2-\xi_4$, so that $S(Ax)=\varphi(\alpha_1,\alpha_3)-\varphi(\alpha_2,\alpha_4)$ by the addition theorem.)
\end{proof}

\begin{remark}\label{rem:Gamma0}
In the original coordinates the group $\Gamma_0$ looks as follows. The signed permutations of coordinates contained in $\Gamma_0$ are exactly the $64$ maps $x\mapsto(d_1x_{\theta(1)},\dots,d_4x_{\theta(4)})$, $d_i=\pm1$, such that $\theta$ preserves the partition $\{\{1,3\},\{2,4\}\}$ and $d_1d_3=d_2d_4$; obviously, each of them maps $S$ to $\pm S$. For example, the permutation $(12)(34)$ of the frame vectors corresponds to $(x_1,x_2,x_3,x_4)\mapsto(x_1,x_2,-x_3,-x_4)$, the sign change of $f_2$ corresponds to the interchange of $x_1$ and $x_3$, and the permutation $(13)(24)$ corresponds to $(x_1,x_2,x_3,x_4)\mapsto(x_2,x_1,x_4,x_3)$. These $64$ maps form a subgroup of index two in $\Gamma_0$, and $\Gamma_0$ is generated by this subgroup and $A$. The symmetries of $\M$ which are signed permutations of coordinates are thus easy to see; the remaining half of $\Gamma_0$, the coset of $A$, is the hidden part.
\end{remark}

\subsection{A symmetry which does not fix the origin}

\begin{lemma}\label{lem:tau}
The isometry
$$\tau(x)=Rx+e_1=(x_1+1,x_2,x_3,-x_4)
$$
satisfies $S\circ\tau=-S$; in particular $\tau\in\Iso$. Moreover, $\tau(0)=e_1$, the linear part $R$ of $\tau$ belongs to $\Oct$, and $\tau(x)=h'+R(x-h')+n_0$ with $n_0=e_1+e_4\in D_4$.
\end{lemma}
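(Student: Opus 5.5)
This is a direct verification, and I will carry it out in three short steps.

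\emph{First, the identity $S\circ\tau=-S$.} Since $\tau(x)=(x_1+1,x_2,x_3,-x_4)$, the periodicity property $\sigma(t+1)=-\sigma(t)$ and the oddness of $\sigma$ (Lemma~\ref{lem:sl}(i), normalized as in \eqref{norm}) give
$$
S(\tau x)=\sigma(x_1+1)\sigma(x_3)-\sigma(x_2)\sigma(-x_4)=-\sigma(x_1)\sigma(x_3)+\sigma(x_2)\sigma(x_4)=-S(x).
$$
Hence $\tau(\M)\subset\M$, and since $\tau$ is a bijective isometry whose inverse satisfies the same identity, $\tau(\M)=\M$. Thus $\tau\in\Iso$, and clearly $\tau(0)=e_1$.

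\emph{Second, $R\in\Oct$.} I check the action of $R$ on the frame \eqref{frameff}. Since $R$ fixes $e_1,e_2,e_3$ and sends $e_4\mapsto-e_4$, we get $Rf_1=f_1$, $Rf_2=f_2$, $Rf_3=e_2-e_4=f_4$ and $Rf_4=f_3$. So $R$ permutes $\F$; in the form \eqref{Lpi} it has $\pi=(34)$ and all $\varepsilon_i=1$.

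\emph{Third, the decomposition of $\tau$.} We have $h'+R(x-h')+n_0=Rx+(h'-Rh')+n_0$. Since $Rh'=\tfrac12(1,1,1,1)=h$, this gives $h'-Rh'=h'-h=-e_4$. With $n_0=e_1+e_4$ the translation part is therefore $-e_4+e_1+e_4=e_1$, as required. Finally, $n_0\in D_4$ because its coordinate sum is $2$.

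None of these steps is a real obstacle. The only point needing care is the sign bookkeeping in $Rh'$: $R$ flips the fourth coordinate of $h'$, turning it into $h$, and this is what makes the translation part come out to exactly $e_1$.
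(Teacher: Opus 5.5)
Your proposal is correct and follows essentially the same route as the paper: the sign identity from $\sigma(t+1)=-\sigma(t)$ and oddness, the observation that $R$ fixes $f_1,f_2$ and swaps $f_3,f_4$, and the computation $h'-Rh'=h'-h=-e_4$. You also make explicit the small step that $S\circ\tau=-S$ gives $\tau(\M)=\M$ rather than only $\tau(\M)\subset\M$, which the paper leaves implicit.
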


\begin{proof}
We have
$$\sigma(x_1+1)\sigma(x_3)-\sigma(x_2)\sigma(-x_4)=-\sigma(x_1)\sigma(x_3)+\sigma(x_2)\sigma(x_4)=-S(x).
$$
The map $R$ fixes $f_1,f_2$ and interchanges $f_3$ and $f_4$, hence $R\in\Oct$. Finally, $h'-Rh'=h'-h=-e_4$, so $h'+R(x-h')+e_1+e_4=Rx+e_1$.
\end{proof}

Note that $R\notin\Gamma_0$, because the transposition $(34)$ does not preserve the partition $\{\{1,4\},\{2,3\}\}$. Note also that $R$ itself does not preserve $\M$: $S(Rx)=\sigma(x_1)\sigma(x_3)+\sigma(x_2)\sigma(x_4)$, which does not vanish, e.g., at $x=h$.

\subsection{Isometries of the singular set}
Every isometry of $\R{4}$ which maps $\M$ onto itself maps the set of points where $\M$ is not a topological manifold onto itself; by Theorem~\ref{thA}(c), this set is $\Sing$. Thus $\Iso$ is contained in the group
$$
G_\Sigma=\{g:\ g\ \text{is an isometry of}\ \R{4}\ \text{with}\ g(\Sing)=\Sing\}.
$$

\begin{lemma}\label{lem:Gsigma}
$G_\Sigma=\{x\mapsto h'+L(x-h')+n:\ L\in\Aut(D_4),\ n\in D_4\}$. In particular, $G_\Sigma$ contains the group of translations by $D_4$ as a normal subgroup of index $|\Aut(D_4)|=1152$.
\end{lemma}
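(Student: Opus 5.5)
We need to show that $G_\Sigma$, the isometry group of $\Sing=D_4^*\setminus(h'+D_4)$, equals the set of maps $x\mapsto h'+L(x-h')+n$ with $L\in\Aut(D_4)$ and $n\in D_4$. The plan is to prove the two inclusions separately and then count.

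\emph{The maps listed lie in $G_\Sigma$.} Let $g(x)=h'+L(x-h')+n$ with $L\in\Aut(D_4)$ and $n\in D_4$.
\begin{itemize}
\item Since $\Aut(D_4)=\Aut(D_4^*)$, the set $h'+D_4^*=D_4^*$ is mapped onto itself (note $h'\in D_4^*$).
\item The coset $h'+D_4$ is mapped onto $h'+L(D_4)+n=h'+D_4$.
\item Hence $g$ preserves the complement $D_4^*\setminus(h'+D_4)=\Sing$.
\end{itemize}

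\emph{Every $g\in G_\Sigma$ has this form.} The idea is to recover the holes intrinsically from $\Sing$. By Lemma~\ref{lem:honeycomb}, each hole $c\in h'+D_4$ lies at distance $1$ from $24$ points of $\Sing$. Call a point $p\in\R4$ \emph{deep} if $\operatorname{dist}(p,\Sing)$ is maximal. I would show the following.
\begin{itemize}
\item Every point of $\R4$ lies within distance $1$ of $\Sing$. Indeed, $\R4$ is covered by the Voronoi cells of $h'+D_4$, which are the $24$-cells of the honeycomb with vertex set $\Sing$, and each cell has circumradius $1$.
\item The points at distance exactly $1$ from $\Sing$ are precisely the holes. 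A point of a cell other than its centre is strictly closer than $1$ to some vertex, since the cell lies in the circumscribed ball and meets its boundary sphere only at the vertices, which are themselves in $\Sing$. More simply, the farthest point from the vertex set inside a $24$-cell is its centre.
\end{itemize}
Consequently $g$ preserves $h'+D_4$, and therefore also $D_4^*=\Sing\cup(h'+D_4)$.

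Now write $g(x)=h'+L(x-h')+n$ with $n=g(h')-h'\in D_4$. Then $L$ preserves $(h'+D_4)-h'=D_4$, so $L\in\Aut(D_4)$.

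\emph{Counting.} The translations by $D_4$ form the kernel of the homomorphism $g\mapsto L$. This homomorphism is onto $\Aut(D_4)$ by the first part, so the translation subgroup is normal of index $|\Aut(D_4)|=1152$.

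The only step needing care is the metric characterisation of the holes as the deep holes of the vertex set. The cleanest route is the Voronoi argument above: each $24$-cell is convex with vertices on the unit sphere around its centre, so $|p-c|<1$ for all $p$ in the cell other than $c$. The key estimate is then that the distance from any point of a cell to the nearest vertex is strictly less than $1$ unless the point is the centre. To justify this, take a point $p\neq c$. There is a vertex $v$ with $\scal{p-c}{v-c}>0$, because the vertices span $\R4$ and come in antipodal pairs. This gives $|p-v|^2=|p-c|^2+1-2\scal{p-c}{v-c}$, and we want this to be less than $1$. Checking it requires $\max_v\scal{p-c}{v-c}>|p-c|^2/2$, which holds because the cell is contained in the set where $\max_v\scal{\cdot}{v-c}\ge|\cdot|^2$ (a defining property of a polytope inscribed in the unit sphere).
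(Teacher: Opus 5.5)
Your proof is correct, but it establishes the key point by a different route from the paper. The key point is that an isometry $g$ with $g(\Sing)=\Sing$ must preserve the set of holes $h'+D_4$.

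The paper's argument is algebraic:
\begin{itemize}
\item Since $0\in\Sing$ and $\Z^4\cup\{h\}\subset\Sing\subset D_4^*$, the additive group generated by the differences of points of $\Sing$ is $D_4^*$.
\item Hence the linear part satisfies $L(D_4^*)=D_4^*$, so $L\in\Aut(D_4^*)=\Aut(D_4)$.
\item Since also $g(0)\in D_4^*$, the map $g$ preserves $D_4^*$, and therefore its subset $D_4^*\setminus\Sing=h'+D_4$.
\end{itemize}
You instead characterize the holes metrically, as the points farthest from $\Sing$. An isometry of $\Sing$ preserves $\operatorname{dist}(\cdot,\Sing)$, so it must permute these points. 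You then read off $L(D_4)=D_4$ directly.

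The paper's route is shorter and needs no convex geometry. Yours makes the invariance of the holes geometrically transparent. It also avoids $\Aut(D_4)=\Aut(D_4^*)$ in the forward direction; you use that identity only for the converse, as the paper does. The price is that you need the Voronoi structure of Lemma~\ref{lem:honeycomb} and a covering-radius estimate.

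Two small points should be tightened.

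First, your opening justifications do not by themselves bound the distance to the \emph{vertices}. This applies to ``each cell has circumradius $1$'' and to ``the cell lies in the circumscribed ball''. Likewise, the remark about antipodal pairs only gives $\max_i\scal{x}{w_i}>0$, which is not enough. What actually proves both bullet claims is the inequality you reach at the end. If $x=p-c=\sum_i\lambda_iw_i$ lies in the convex hull of the unit vectors $w_i=v_i-c$, then $|x|^2=\sum_i\lambda_i\scal{x}{w_i}\le\max_i\scal{x}{w_i}$. Hence $\min_i|x-w_i|^2\le1-|x|^2$, which is $<1$ for $x\neq0$.

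Second, you must show that the maximum of $\operatorname{dist}(\cdot,\Sing)$ is attained exactly at the holes. For this you also need $\operatorname{dist}(c,\Sing)\ge1$ for every hole $c$, i.e.\ that no singular point lies closer than the $24$ vertices of its cell. This is immediate: for $y\in\Sing$, $c-y$ is a nonzero vector of $D_4^*$, and $D_4^*$ has minimal norm $1$.

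With these two additions the argument is complete.
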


\begin{proof}
Let $g\in G_\Sigma$, and write $g(x)=Lx+t$ with $L$ orthogonal.

\emph{Step 1: $L\in\Aut(D_4)$.} Since $0\in\Sing$, the additive group generated by $\Sing$ coincides with the group generated by the differences of points of $\Sing$. As $\Z^4\cup\{h\}\subset\Sing\subset D_4^*$, this group is $D_4^*$. For $y,y'\in\Sing$ we have
$$
L(y-y')=g(y)-g(y'),
$$
hence $L(D_4^*)\subset D_4^*$. Applying the same argument to $g^{-1}$, we get $L\in\Aut(D_4^*)=\Aut(D_4)$.

\emph{Step 2: $g$ fixes the set of holes.} Since $t=g(0)\in\Sing\subset D_4^*$, we have $g(D_4^*)=D_4^*$. As $g$ maps $\Sing=D_4^*\setminus(h'+D_4)$ onto itself, it maps $h'+D_4$ onto itself. In particular $g(h')=h'+n$ for some $n\in D_4$, and then
$$
g(x)=g(h')+L(x-h')=h'+L(x-h')+n .
$$

\emph{Step 3: the converse.} If $L\in\Aut(D_4)$ and $n\in D_4$, then the map $x\mapsto h'+L(x-h')+n$ preserves $D_4^*$ (because $h'\in D_4^*$) and $h'+D_4$, hence also $\Sing$.

Finally, the translations by $D_4$ form a normal subgroup of $G_\Sigma$, because $g\circ(x\mapsto x+n)\circ g^{-1}$ is the translation by $Ln\in D_4$. The quotient is isomorphic to $\Aut(D_4)$ via $g\mapsto dg$.
\end{proof}

By Lemma~\ref{lem:honeycomb}, $G_\Sigma$ is precisely the symmetry group of the $24$-cell honeycomb with vertex set $\Sing$.

\subsection{Proof of Theorem~\ref{thB}}
Part (a) is Proposition~\ref{pro:transl}.

\emph{Proof of \textup{(b)}.} Consider the group
$$
G=\{x\mapsto h'+L(x-h')+n:\ L\in\Oct,\ n\in D_4\}.
$$
By Corollary~\ref{cor:Oct}, $G\subset\Iso$, and clearly $|G/D_4|=|\Oct|=384$. On the other hand, by Lemma~\ref{lem:Gsigma},
$$
\Iso\subset G_\Sigma,\qquad |G_\Sigma/D_4|=1152 .
$$
Thus $\Iso/D_4$ is a subgroup of $G_\Sigma/D_4$ containing the subgroup $G/D_4$ of index $3$. Since $3$ is a prime, either $\Iso=G$ or $\Iso=G_\Sigma$.

The latter is impossible. Indeed, the coordinate interchange
$$
P(x_1,x_2,x_3,x_4)=(x_2,x_1,x_3,x_4)
$$
belongs to $\Aut(D_4)$ and fixes $h'$, hence $P\in G_\Sigma$. But the point $x=(0,\frac12,\frac12,0)$ lies in $\M$, while $S(Px)=S(\frac12,0,\frac12,0)=1\ne0$.

Hence $\Iso=G$, which proves (b). In particular, $\Iso=D_4\rtimes\Iso_{h'}$, and the stabilizer $\Iso_{h'}$ of the hole $h'$ is isomorphic to $\Oct$ via the map $g\mapsto dg$ which assigns to $g$ its linear part.

\emph{Transitivity.} We follow the orbit of the origin under $\Iso$.
\begin{itemize}
\item By Proposition~\ref{pro:transl}, the orbit contains $D_4$.
\item By Lemma~\ref{lem:tau}, it contains $\tau(D_4)=e_1+D_4$ (note that $R(D_4)=D_4$).
\item Since $A\in\Oct$ and $Ah'=h'$, the map $x\mapsto h'+A(x-h')=Ax$ belongs to $\Iso$. Since $Ae_1=h$ and $A\in\Aut(D_4)$, the orbit contains $A(e_1+D_4)=h+D_4$.
\end{itemize}
Hence $\Iso$ acts transitively on $\Sing$.

\emph{The stabilizer of the origin.} By Lemma~\ref{lem:Gsigma}, $D_4$ is a normal subgroup of $G_\Sigma$, and hence of $\Iso$. The group $\Iso/D_4$ acts transitively on the three-point set
$$
\Sing/D_4=\{[0],[e_1],[h]\}.
$$
The stabilizer of $[0]$ consists of the classes of those $g$ with $g(0)\in D_4$. Composing with a translation, we see that each such class contains exactly one element fixing $0$, i.e. one element of the stabilizer $\Iso_0$ of the origin. Therefore
\begin{equation}\label{orbit}
|\Iso/D_4|=3\,|\Iso_0| .
\end{equation}
Since $|\Iso/D_4|=384$, we get $|\Iso_0|=128$. By Proposition~\ref{pro:Gamma0}, $\Gamma_0\subset\Iso_0$, and both groups have order $128$; hence $\Iso_0=\Gamma_0$. This proves the first statement of (c); the last statement of (c) is Proposition~\ref{pro:Gamma0} (see also Remark~\ref{rem:thetaA}).

Finally, (d) follows from Lemma~\ref{lem:honeycomb}, Lemma~\ref{lem:Gsigma} and $|G_\Sigma/\Iso|=1152/384=3$. \qed

\begin{remark}\label{rem:sym}
(i) Theorem~\ref{thB} shows that the symmetry group of $\M$ is \emph{symmorphic}, i.e. a semidirect product of its translation group and its point group, but with respect to the hole $h'$ rather than to a singular point: no singular point is fixed by a subgroup of $\Iso$ isomorphic to $\Oct$. The stabilizer of any singular point $y$ is conjugate to $\Gamma_0$ (by transitivity) and has order $128$; it acts on the tangent cone $C_y$ and preserves the pair of eigenplanes of the Hessian of $S$ at $y$.

(ii) The point group $\Oct$ is not contained in $W(D_4)$: it acts on the three nontrivial cosets \eqref{cosets} through the full symmetric group $S_3$ (for instance, $A$ interchanges $e_1+D_4$ and $h+D_4$, and $R$ interchanges $h+D_4$ and $h'+D_4$), and $\Oct\cap W(D_4)$ has order $64$: since $W(D_4)$ consists of the signed permutations of coordinates with an even number of sign changes, $\Oct\cap W(D_4)$ consists exactly of the $64$ maps described in Remark~\ref{rem:Gamma0}. Of course, the \emph{affine} action of $\Iso$ fixes the class of holes.

(iii) By Proposition~\ref{pro:transl}, the subgroup $D_4^+=\{n\in D_4:\ n_1+n_3\ \text{even}\}$ of index two consists of the translations preserving $S$, while the translations by $D_4\setminus D_4^+$ change the sign of $S$, i.e. they reverse the unit normal $DS/|DS|$ of $\Mreg$. This fact is responsible for the non-orientability in Theorem~\ref{thC}(c).
\end{remark}

\begin{table}[ht]
\centering
\small
\renewcommand{\arraystretch}{1.35}
\setlength{\tabcolsep}{4pt}
\begin{tabular}{|l|c|c|c|}
\hline
isometry $g$ & $S\circ g$ & linear part in the coordinate $w$ & action on types\\
\hline
$x\mapsto x+n$, $n\in D_4^+$ & $S$ & $\Id$ & $\Id$\\
$x\mapsto x+n$, $n\in D_4\setminus D_4^+$ & $-S$ & $\Id$ & $\Id$\\
$(x_1,x_2,x_3,x_4)\mapsto(x_3,x_2,x_1,x_4)$ & $S$ & $(w_0,w_1,-w_2,w_3)$ & $\Id$\\
$(x_1,x_2,x_3,x_4)\mapsto(x_2,x_1,x_4,x_3)$ & $-S$ & $(w_3,-w_2,-w_1,w_0)$ & $\Id$\\
$(x_1,x_2,x_3,x_4)\mapsto(x_1,x_2,-x_3,-x_4)$ & $-S$ & $(-w_2,w_3,-w_0,w_1)$ & $\Id$\\
$x\mapsto Ax$ & $S/Q$ & $(w_0,w_1,-w_3,-w_2)$ & $\mathrm{j}\leftrightarrow\mathrm{k}$\\
$\tau:x\mapsto(x_1+1,x_2,x_3,-x_4)$ & $-S$ & $(w_0,w_3,w_2,w_1)$ & $\mathrm{i}\leftrightarrow\mathrm{k}$\\
$\tau\circ A$ & $-S/Q$ & $(w_0,-w_2,-w_3,w_1)$ & $\mathrm{i}\to\mathrm{k}\to\mathrm{j}\to\mathrm{i}$\\
\hline
\end{tabular}
\smallskip
\caption{Some elements of the isometry group $\Iso$. The second column gives the effect on the defining function $S$, where $Q$ is the positive function of Theorem~\ref{thB}(c). The third column gives the image of $(w_0,w_1,w_2,w_3)$ under the linear part of $g$ in the coordinate $w=\bar u(x-h')/\sqrt2$ of \S\ref{ssec:quaternionic}. The last column gives the induced permutation of the three types $\mathrm{i},\mathrm{j},\mathrm{k}$ of singular points (types of $D_4$, $h+D_4$ and $e_1+D_4$, respectively). The isometries $A$ and $\tau$ already induce the full symmetric group $S_3$ on the three types.}\label{tab:symmetries}
\end{table}

Table~\ref{tab:symmetries} collects some typical elements of $\Iso$ and their effect. Recall from Proposition~\ref{pro:quaternionic} that the singular points of $\M$ are of three types $\mathrm{i},\mathrm{j},\mathrm{k}$: the points of $D_4$, of $h+D_4$ and of $e_1+D_4$, respectively. In the coordinate $w$ of \S\ref{ssec:quaternionic}, the type of a singular point corresponds to one of the three ways of splitting the coordinates $(w_0,w_1,w_2,w_3)$ into two pairs: $\{w_0,w_1\}\sqcup\{w_2,w_3\}$ for type $\mathrm{i}$, $\{w_0,w_2\}\sqcup\{w_1,w_3\}$ for type $\mathrm{j}$, and $\{w_0,w_3\}\sqcup\{w_1,w_2\}$ for type $\mathrm{k}$. Hence the action of an isometry on the types can be read off from its linear part in the coordinate $w$, which is a signed permutation of $(w_0,w_1,w_2,w_3)$.

\section{Topology: proof of Theorem~\ref{thC}}\label{sec:top}

\subsection{Level curves in a square}
Let $Q=(0,1)^2$, $\bar Q=[0,1]^2$, $c=(\frac12,\frac12)$, and let
$$
\lambda(v_1,v_2)=\sigma(v_1)\sigma(v_2),\qquad (v_1,v_2)\in\bar Q .
$$
Then $\lambda=0$ on $\partial Q$, $0<\lambda\le1$ on $Q$, and $\lambda=1$ only at $c$ (Lemma~\ref{lem:sl}(ii)).

\begin{lemma}\label{lem:square}
For every $p\in\partial Q$ the function $t\mapsto\lambda(c+t(p-c))$ decreases strictly from $1$ to $0$ on $[0,1]$. Consequently, the map
$$
\Psi:\partial Q\times[0,1]\to\bar Q,\qquad \Psi(p,\mu)=\text{the point $v$ of the segment $[p,c]$ with $\lambda(v)=\mu$},
$$
is continuous, $\Psi(p,0)=p$, $\Psi(p,1)=c$, $\lambda(\Psi(p,\mu))=\mu$, and $\Psi$ induces a homeomorphism of the cone $\Cone(\partial Q)=\partial Q\times[0,1]/\partial Q\times\{1\}$ onto $\bar Q$. In particular, for $0<\mu<1$ the level set $\{\lambda=\mu\}\cap\bar Q$ is a smooth closed convex curve, and $\lambda$ has no critical points in $Q\setminus\{c\}$.
\end{lemma}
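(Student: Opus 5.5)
The plan is to reduce everything to one monotonicity statement along rays and then read off the remaining claims. Write $\ell(v)=\log\sigma(v_1)+\log\sigma(v_2)$ on $Q$. By Lemma~\ref{lem:sl}(v), $\log\sigma$ is strictly concave on $(0,1)$, so $\ell$ is strictly concave on $Q$. By Lemma~\ref{lem:sl}(i),(ii), $\sigma$ is symmetric about $\frac12$ and attains its maximum $1$ only at $\frac12$; hence $\ell$ is symmetric under $v\mapsto 2c-v$ and has its unique maximum $0$ at $c$. In particular $D\ell(c)=0$.

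Fix $p\in\partial Q$ and set $f(t)=\ell(c+t(p-c))$ for $t\in[0,1)$. Then $f$ is strictly concave with $f'(0)=0$, so $f'(t)<0$ for $t\in(0,1)$. Moreover $f(t)\to-\infty$ as $t\to1$, since $\lambda=0$ on $\partial Q$. Exponentiating, $t\mapsto\lambda(c+t(p-c))$ decreases strictly and continuously from $1$ to $0$ on $[0,1]$. Consequently, for each $\mu\in[0,1]$ the segment $[p,c]$ contains exactly one point with $\lambda=\mu$, and $\Psi$ is well defined with $\Psi(p,0)=p$, $\Psi(p,1)=c$ and $\lambda\circ\Psi=\mu$.

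\textbf{Continuity of $\Psi$.} I expect this to be the only step needing care, and I would handle it with the implicit function theorem. Write $\Psi(p,\mu)=c+t(p,\mu)(p-c)$. Near $0<\mu<1$, the strict inequality $\partial_t\lambda<0$ makes $t(p,\mu)$ continuous, with $p$ in a local arclength parametrization of $\partial Q$. At the ends I would argue directly from monotonicity instead. Near $\mu=1$, $|\Psi-c|\to0$ uniformly, because $\max\lambda$ on the circle $|v-c|=r$ is less than $1$ by uniqueness of the maximum. Near $\mu=0$, $\lambda$ is bounded away from $0$ on compact subsets of $Q$. A cleaner alternative is to observe that $\Psi$ is a continuous bijection from the compact space $\partial Q\times[0,1]$ modulo the collapsed top onto the Hausdorff space $\bar Q$. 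Continuity still has to be checked, and uniform strict monotonicity along the continuously varying rays gives it. Bijectivity of the induced map on the cone is clear: the rays from $c$ to points of $\partial Q$ cover $\bar Q$, meet only at $c$, and $\mu$ is recovered as $\lambda$.

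\textbf{The remaining claims.} There are no critical points in $Q\setminus\{c\}$, since the radial derivative $\scal{D\lambda(v)}{v-c}=\lambda f'(t)/t<0$ there. For $0<\mu<1$ the level set $\{\lambda=\mu\}$ is then a smooth curve, and it is $\Psi(\partial Q\times\{\mu\})$, a closed curve homeomorphic to a circle. It bounds the superlevel set $\{\ell\ge\log\mu\}$, which is convex because $\ell$ is concave. Hence the level curve is a smooth closed convex curve.
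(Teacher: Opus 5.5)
Your proof is correct and follows the paper's argument essentially step for step. Strict concavity of $\ell=\log\lambda$ with its maximum at $c$ gives strict decrease along each segment; continuity of $\Psi$ together with compactness of the cone and the Hausdorff property of $\bar Q$ gives the homeomorphism; and the level curves are the boundaries of the convex superlevel sets of $\ell$. The only differences are cosmetic. You obtain continuity from the implicit function theorem, which is cleanest in polar coordinates around $c$, since $\partial Q$ has corners. You rule out critical points by the radial derivative, which is $\lambda\,t\,f'(t)$ rather than $\lambda f'(t)/t$, though the sign is the same; the paper instead uses the direct formula for $D\lambda$.
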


\begin{proof}
By Lemma~\ref{lem:sl}(v), $\ell=\log\lambda=\log\sigma(v_1)+\log\sigma(v_2)$ is strictly concave on $Q$, and $\ell(c)=0=\max\ell$. Hence the restriction of $\ell$ to the segment from $c$ to $p$ is strictly concave with its maximum at $c$, and therefore strictly decreasing; at $p$ it tends to $-\infty$. This proves the first statement. The map $\Psi$ is well defined by the monotonicity; its continuity follows from the continuity of $\lambda$ and the strict monotonicity along each segment, by a standard compactness argument. It is a bijection of $\partial Q\times[0,1)$ onto $\bar Q\setminus\{c\}$, since every point of $\bar Q\setminus\{c\}$ lies on exactly one segment $[c,p)$, $p\in\partial Q$ (here we use that $\bar Q$ is convex and $c$ is an interior point). Since $\Cone(\partial Q)$ is compact and $\bar Q$ is Hausdorff, the induced continuous bijection is a homeomorphism.
For $0<\mu<1$ the set $\{\ell\ge\log\mu\}$ is compact, strictly convex and contains $c$ in its interior, and $\{\lambda=\mu\}$ is its boundary. Finally, $D\lambda=(\sigma'(v_1)\sigma(v_2),\sigma(v_1)\sigma'(v_2))$ vanishes in $Q$ only if $\sigma'(v_1)=\sigma'(v_2)=0$, i.e. only at $c$.
\end{proof}

\begin{figure}[ht]
\centering
\IfFileExists{levels_normalized.pdf}{\includegraphics[width=0.45\textwidth]{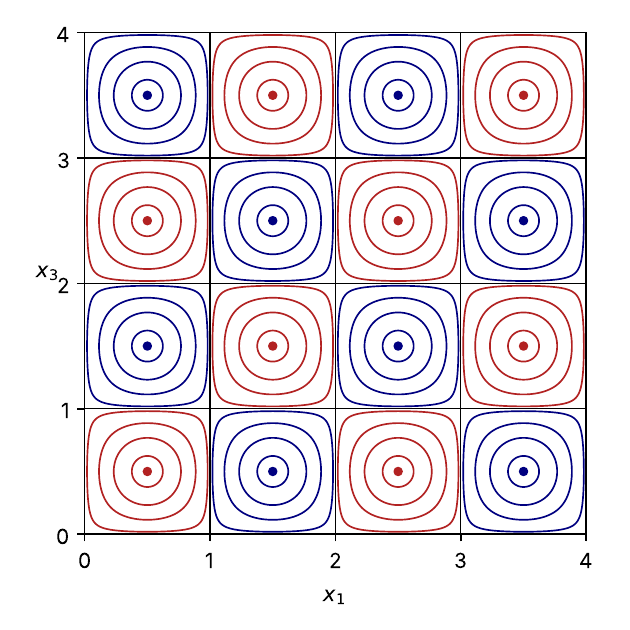}}{\fbox{\parbox{0.45\textwidth}{\centering figure levels\_normalized.pdf}}}
\caption{Level curves $\sigma(v_1)\sigma(v_2)=r$ in the unit squares of the plane: red for $r>0$, blue for $r<0$. The level sets are convex curves which shrink to the centres of the squares as $|r|\to1$.}\label{fig:levels}
\end{figure}

\subsection{The cell structure}
For $n\in\Z^4$ we put $Q^{13}_n=(n_1,n_1+1)\times(n_3,n_3+1)$ and $Q^{24}_n=(n_2,n_2+1)\times(n_4,n_4+1)$, so that, after the obvious permutation of coordinates, the open cube $\mathring P_n=n+(0,1)^4$ is $Q^{13}_n\times Q^{24}_n$ and $T_n=\partial Q^{13}_n\times\partial Q^{24}_n$.

\begin{proposition}\label{pro:cells}
\begin{enumerate}
\item[(i)] $\M\cap\partial P_n=T_n$ for every $n\in\Z^4$. Every point of $\M$ which does not lie in any open cube $\mathring P_n$ belongs to $\Mo$, and $\Mo\cap P_n=T_n$.
\item[(ii)] If $n\in e_1+D_4$, then $\M\cap\mathring P_n=\emptyset$.
\item[(iii)] If $n\in D_4$, then $K_n=\M\cap P_n$ is homeomorphic to the cone over $T_n$, by a homeomorphism which is the identity on $T_n$ and maps the vertex of the cone to $n+h$.
\end{enumerate}
\end{proposition}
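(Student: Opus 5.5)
The plan is to work entirely with the product structure $\mathring P_n=Q^{13}_n\times Q^{24}_n$ and the two functions $\lambda_{13}(x_1,x_3)=\sigma(x_1)\sigma(x_3)$, $\lambda_{24}(x_2,x_4)=\sigma(x_2)\sigma(x_4)$. On $\M$ we have $\lambda_{13}=\lambda_{24}$. By the sign rule for $\sigma$, on $Q^{13}_n$ the function $\lambda_{13}$ has constant sign $(-1)^{n_1+n_3}$ and vanishes on $\partial Q^{13}_n$. Similarly, $\lambda_{24}$ has sign $(-1)^{n_2+n_4}$ on $Q^{24}_n$. After translating by $n$ and multiplying by the appropriate sign, $|\lambda_{13}|$ becomes the function $\lambda$ of Lemma~\ref{lem:square} on $\bar Q$, and likewise for $|\lambda_{24}|$.

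For (i), I first take a point $x\in\partial P_n$, so some coordinate is an integer. Say $x_1\in\Z$; then $\lambda_{13}(x)=0$. Hence $x\in\M$ if and only if $\lambda_{24}(x)=0$, which means $x_2$ or $x_4$ is an integer. Together with $x\in P_n$, this puts $(x_1,x_3)\in\partial Q^{13}_n$ and $(x_2,x_4)\in\partial Q^{24}_n$, so $x\in T_n$. The other cases are symmetric, and conversely $T_n\subset\Mo\subset\M$ is clear. Next, a point of $\M$ lying in no open cube has some integer coordinate, and the same argument shows it lies on a plane $\{x_i=a,\ x_j=b\}$ with $i\in\{1,3\}$, $j\in\{2,4\}$, i.e. in $\Mo$. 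Finally, $\Mo\cap P_n=T_n$ follows directly from the definitions.

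For (ii), if $n\in e_1+D_4$ the signs $(-1)^{n_1+n_3}$ and $(-1)^{n_2+n_4}$ differ. On the open cube both $\lambda$'s are nonzero, so they have opposite signs and cannot be equal. Hence $\M\cap\mathring P_n=\emptyset$.

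For (iii), with $n\in D_4$ the signs agree, so $\M\cap P_n=\{(v,w)\in\bar Q^{13}_n\times\bar Q^{24}_n:\ \mu(v)=\mu(w)\}$, where $\mu=|\lambda|\in[0,1]$. Using the map $\Psi$ of Lemma~\ref{lem:square} for each factor, I define $F:T_n\times[0,1]\to K_n$ by $F(p,p',t)=(\Psi(p,t),\Psi(p',t))$. This is continuous and lands in $K_n$ because $\mu(\Psi(p,t))=t=\mu(\Psi(p',t))$. At $t=0$ it is the identity of $T_n$, and it collapses $T_n\times\{1\}$ to $(c,c')=n+h$.

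Bijectivity onto $K_n$ modulo the cone point follows from Lemma~\ref{lem:square}. Any $(v,w)\in K_n$ other than the centre has a common value $t=\mu(v)=\mu(w)<1$. Here $v\ne c$ and $w\ne c'$, since $\mu=1$ only at the centres. Then $v$ and $w$ determine unique $p$ and $p'$, since $\Psi$ restricted to $\partial Q\times[0,1)$ is a bijection onto $\bar Q\setminus\{c\}$. Since the cone is compact and $K_n$ is Hausdorff, the induced map is a homeomorphism.

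The only delicate point is the continuity of $\Psi$, which Lemma~\ref{lem:square} already supplies. So I expect no real obstacle beyond carefully tracking the signs in (i)–(ii).
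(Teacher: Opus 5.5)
Your proof is correct and follows the paper's argument essentially step by step: you use the sign analysis of $\lambda_{13}$ and $\lambda_{24}$ for (i)--(ii), and for (iii) the product map $(p,p',\mu)\mapsto(\Psi_{13}(p,\mu),\Psi_{24}(p',\mu))$ built from Lemma~\ref{lem:square}, which is a continuous bijection from the compact cone onto the Hausdorff space $K_n$. Working with $|\lambda_{13}|,|\lambda_{24}|$ instead of the paper's $\epsilon\lambda_{13},\epsilon\lambda_{24}$ is equivalent on the closed squares, since both functions have the constant sign $\epsilon$ there (or vanish).
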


\begin{proof}
Let $\lambda_{13}(x)=\sigma(x_1)\sigma(x_3)$ and $\lambda_{24}(x)=\sigma(x_2)\sigma(x_4)$, so that $\M=\{\lambda_{13}=\lambda_{24}\}$.

(i) Let $x\in\M$ and suppose that some coordinate of $x$ is an integer, say $x_1\in\Z$ (the other cases are similar). Then $\lambda_{13}(x)=0$, hence $\lambda_{24}(x)=0$, i.e. $x_2\in\Z$ or $x_4\in\Z$; thus $x\in\Mo$. Conversely, a point of $\Mo$ satisfies $\lambda_{13}=\lambda_{24}=0$. Applying this to $x\in P_n$, we see that $\M\cap\partial P_n$ and $\Mo\cap P_n$ both coincide with the set of $x\in P_n$ with $(x_1,x_3)\in\partial Q^{13}_n$ and $(x_2,x_4)\in\partial Q^{24}_n$, which is $T_n$.

(ii) On $\mathring P_n$ the functions $\lambda_{13}$ and $\lambda_{24}$ do not vanish and have the signs $(-1)^{n_1+n_3}$ and $(-1)^{n_2+n_4}$, respectively. If $n\in e_1+D_4$ these signs are different.

(iii) Let $n\in D_4$ and $\epsilon=(-1)^{n_1+n_3}=(-1)^{n_2+n_4}$. By the periodicity of $\sigma$, the function $\epsilon\lambda_{13}$ on $\bar Q^{13}_n$ is the translate of the function $\lambda$ of Lemma~\ref{lem:square}, and so is $\epsilon\lambda_{24}$ on $\bar Q^{24}_n$; let $\Psi_{13}$ and $\Psi_{24}$ be the corresponding maps. By (i) and (ii),
\begin{align*}
K_n&=\{x\in P_n:\ \epsilon\lambda_{13}(x)=\epsilon\lambda_{24}(x)\}\\
&=\{(\Psi_{13}(p,\mu),\Psi_{24}(p',\mu)):\ p\in\partial Q^{13}_n,\ p'\in\partial Q^{24}_n,\ \mu\in[0,1]\}.
\end{align*}
Hence the map $(p,p',\mu)\mapsto(\Psi_{13}(p,\mu),\Psi_{24}(p',\mu))$ induces a continuous bijection of $\Cone(T_n)=T_n\times[0,1]/T_n\times\{1\}$ onto $K_n$; it is a homeomorphism by compactness. For $\mu=0$ it is the identity of $T_n$, and the vertex $\mu=1$ is mapped to the centre $n+h$ of $P_n$.
\end{proof}

\begin{figure}[ht]
\centering

\begin{tikzpicture}[scale=0.9]
\draw[thick] (0,0) rectangle (3,3);
\draw[blue!70!black,thick] plot[smooth cycle] coordinates {(1.282,0.142) (1.462,0.137) (1.650,0.140) (1.823,0.149) (1.989,0.165) (2.152,0.191) (2.295,0.226) (2.430,0.277) (2.543,0.345) (2.640,0.438) (2.715,0.554) (2.767,0.682) (2.805,0.828) (2.831,0.983) (2.850,1.155) (2.860,1.328) (2.863,1.508) (2.859,1.695) (2.848,1.860) (2.829,2.032) (2.801,2.190) (2.762,2.333) (2.708,2.460) (2.631,2.573) (2.535,2.661) (2.415,2.730) (2.287,2.776) (2.135,2.812) (1.972,2.837) (1.808,2.852) (1.627,2.861) (1.448,2.862) (1.268,2.857) (1.087,2.844) (0.930,2.824) (0.772,2.792) (0.637,2.751) (0.510,2.691) (0.404,2.610) (0.323,2.511) (0.258,2.385) (0.211,2.243) (0.180,2.093) (0.158,1.927) (0.145,1.763) (0.138,1.583) (0.138,1.395) (0.146,1.223) (0.160,1.050) (0.183,0.892) (0.217,0.735) (0.264,0.600) (0.329,0.480) (0.413,0.382) (0.518,0.305) (0.645,0.246) (0.794,0.203) (0.945,0.174) (1.110,0.154)};
\draw[violet,thick] plot[smooth cycle] coordinates {(1.470,0.517) (1.590,0.520) (1.710,0.535) (1.823,0.561) (1.920,0.594) (2.015,0.637) (2.093,0.684) (2.170,0.742) (2.240,0.810) (2.301,0.885) (2.353,0.968) (2.397,1.057) (2.430,1.147) (2.459,1.260) (2.476,1.373) (2.483,1.500) (2.476,1.627) (2.459,1.740) (2.433,1.845) (2.397,1.942) (2.353,2.032) (2.301,2.115) (2.240,2.190) (2.170,2.258) (2.100,2.311) (2.015,2.362) (1.920,2.406) (1.823,2.439) (1.710,2.465) (1.590,2.480) (1.470,2.483) (1.350,2.474) (1.230,2.453) (1.125,2.423) (1.030,2.385) (0.945,2.340) (0.867,2.288) (0.795,2.226) (0.728,2.152) (0.674,2.078) (0.623,1.987) (0.585,1.897) (0.555,1.800) (0.531,1.688) (0.518,1.560) (0.518,1.440) (0.531,1.312) (0.555,1.200) (0.585,1.103) (0.623,1.013) (0.674,0.922) (0.728,0.848) (0.795,0.774) (0.867,0.713) (0.945,0.660) (1.030,0.615) (1.125,0.577) (1.230,0.547) (1.350,0.526)};
\draw[red!75!black,thick] plot[smooth cycle] coordinates {(1.425,0.997) (1.492,0.992) (1.560,0.995) (1.617,1.005) (1.669,1.020) (1.718,1.040) (1.763,1.064) (1.808,1.094) (1.845,1.125) (1.875,1.155) (1.906,1.193) (1.935,1.235) (1.958,1.277) (1.979,1.328) (1.995,1.383) (2.005,1.440) (2.008,1.508) (2.003,1.575) (1.992,1.627) (1.976,1.680) (1.953,1.732) (1.927,1.777) (1.900,1.815) (1.868,1.853) (1.836,1.883) (1.799,1.913) (1.755,1.941) (1.707,1.965) (1.657,1.984) (1.605,1.998) (1.545,2.006) (1.478,2.008) (1.417,2.002) (1.355,1.987) (1.305,1.970) (1.260,1.949) (1.215,1.922) (1.177,1.894) (1.140,1.860) (1.110,1.828) (1.078,1.785) (1.051,1.740) (1.030,1.695) (1.013,1.645) (1.000,1.590) (0.993,1.530) (0.994,1.455) (1.002,1.395) (1.016,1.343) (1.035,1.293) (1.057,1.248) (1.083,1.208) (1.112,1.170) (1.147,1.132) (1.185,1.100) (1.223,1.073) (1.268,1.047) (1.312,1.027) (1.365,1.010)};
\fill (1.5,1.5) circle (1.6pt);
\node[below] at (1.5,0) {$x_1$};
\node[left] at (0,1.5) {$x_3$};
\node at (1.5,3.35) {$\bar Q^{13}_n$};
\node at (3.9,1.5) {\Large$\times$};
\draw[thick] (5.3,0) rectangle (8.3,3);
\draw[blue!70!black,thick] plot[smooth cycle] coordinates {(6.582,0.142) (6.762,0.137) (6.950,0.140) (7.123,0.149) (7.289,0.165) (7.452,0.191) (7.595,0.226) (7.730,0.277) (7.843,0.345) (7.940,0.438) (8.015,0.554) (8.067,0.682) (8.105,0.828) (8.131,0.983) (8.150,1.155) (8.160,1.328) (8.163,1.508) (8.159,1.695) (8.148,1.860) (8.129,2.032) (8.101,2.190) (8.062,2.333) (8.008,2.460) (7.931,2.573) (7.835,2.661) (7.715,2.730) (7.587,2.776) (7.435,2.812) (7.272,2.837) (7.107,2.852) (6.928,2.861) (6.747,2.862) (6.567,2.857) (6.387,2.844) (6.230,2.824) (6.072,2.792) (5.938,2.751) (5.810,2.691) (5.704,2.610) (5.622,2.511) (5.558,2.385) (5.511,2.243) (5.480,2.093) (5.458,1.927) (5.445,1.763) (5.438,1.583) (5.438,1.395) (5.446,1.223) (5.460,1.050) (5.483,0.892) (5.517,0.735) (5.564,0.600) (5.629,0.480) (5.712,0.382) (5.817,0.305) (5.945,0.246) (6.094,0.203) (6.245,0.174) (6.410,0.154)};
\draw[violet,thick] plot[smooth cycle] coordinates {(6.770,0.517) (6.890,0.520) (7.010,0.535) (7.123,0.561) (7.220,0.594) (7.315,0.637) (7.393,0.684) (7.470,0.742) (7.540,0.810) (7.601,0.885) (7.653,0.968) (7.697,1.057) (7.730,1.147) (7.759,1.260) (7.776,1.373) (7.783,1.500) (7.776,1.627) (7.759,1.740) (7.733,1.845) (7.697,1.942) (7.653,2.032) (7.601,2.115) (7.540,2.190) (7.470,2.258) (7.400,2.311) (7.315,2.362) (7.220,2.406) (7.123,2.439) (7.010,2.465) (6.890,2.480) (6.770,2.483) (6.650,2.474) (6.530,2.453) (6.425,2.423) (6.330,2.385) (6.245,2.340) (6.167,2.288) (6.095,2.226) (6.028,2.152) (5.974,2.078) (5.923,1.987) (5.885,1.897) (5.855,1.800) (5.831,1.688) (5.818,1.560) (5.818,1.440) (5.831,1.312) (5.855,1.200) (5.885,1.103) (5.923,1.013) (5.974,0.922) (6.028,0.848) (6.095,0.774) (6.167,0.713) (6.245,0.660) (6.330,0.615) (6.425,0.577) (6.530,0.547) (6.650,0.526)};
\draw[red!75!black,thick] plot[smooth cycle] coordinates {(6.725,0.997) (6.792,0.992) (6.860,0.995) (6.917,1.005) (6.969,1.020) (7.018,1.040) (7.062,1.064) (7.107,1.094) (7.145,1.125) (7.175,1.155) (7.206,1.193) (7.235,1.235) (7.258,1.277) (7.279,1.328) (7.295,1.383) (7.305,1.440) (7.308,1.508) (7.303,1.575) (7.292,1.627) (7.276,1.680) (7.253,1.732) (7.227,1.777) (7.200,1.815) (7.168,1.853) (7.136,1.883) (7.099,1.913) (7.055,1.941) (7.007,1.965) (6.957,1.984) (6.905,1.998) (6.845,2.006) (6.777,2.008) (6.717,2.002) (6.655,1.987) (6.605,1.970) (6.560,1.949) (6.515,1.922) (6.478,1.894) (6.440,1.860) (6.410,1.828) (6.378,1.785) (6.351,1.740) (6.330,1.695) (6.312,1.645) (6.300,1.590) (6.293,1.530) (6.294,1.455) (6.302,1.395) (6.316,1.343) (6.335,1.293) (6.357,1.248) (6.383,1.208) (6.412,1.170) (6.447,1.132) (6.485,1.100) (6.523,1.073) (6.567,1.047) (6.612,1.027) (6.665,1.010)};
\fill (6.8,1.5) circle (1.6pt);
\node[below] at (6.8,0) {$x_2$};
\node[left] at (5.3,1.5) {$x_4$};
\node at (6.8,3.35) {$\bar Q^{24}_n$};
\begin{scope}[shift={(12,0)}]
\draw[thick] (0,0) ellipse (1.6 and 0.45);
\draw[thick] (-1.6,0)--(0,3) (1.6,0)--(0,3);
\draw[blue!70!black,thick] (0,0.750) ellipse (1.200 and 0.338);
\draw[violet,thick] (0,1.500) ellipse (0.800 and 0.225);
\draw[red!75!black,thick] (0,2.250) ellipse (0.400 and 0.113);
\fill (0,3) circle (1.8pt) node[above] {$n+h$};
\node[right] at (1.65,0) {$T_n$};
\node[right,font=\footnotesize] at (1.1,1.6) {$\Gamma^{13}_n(\mu)\times\Gamma^{24}_n(\mu)$};
\end{scope}
\end{tikzpicture} 

\caption{The cone structure of $K_n=\M\cap P_n$, $n\in D_4$ (Proposition~\ref{pro:cells}). Left: the level curves $\Gamma^{13}_n(\mu)=\{\epsilon\lambda_{13}=\mu\}\cap\bar Q^{13}_n$ and $\Gamma^{24}_n(\mu)=\{\epsilon\lambda_{24}=\mu\}\cap\bar Q^{24}_n$, $\epsilon=(-1)^{n_1+n_3}$, for $\mu=0.12,\,0.45,\,0.82$. Right: a schematic picture of $K_n$; its slice $\{\epsilon\lambda_{13}=\mu\}$ is the torus $\Gamma^{13}_n(\mu)\times\Gamma^{24}_n(\mu)$, which tends to $T_n=\partial Q^{13}_n\times\partial Q^{24}_n\subset\Mo$ as $\mu\to0$ and shrinks to the singular point $n+h$ as $\mu\to1$.}\label{fig:strata}
\end{figure}

\subsection{Proof of Theorem~\ref{thC}}
(a) This is Proposition~\ref{pro:cells}: since $\R{4}$ is the union of the closed cubes $P_n$, we have $\M=\bigcup_n(\M\cap P_n)$, and $\M\cap P_n=T_n\subset\Mo$ for $n\notin D_4$. Two different cubes meet only along their boundaries, so $K_n\cap K_{n'}\subset\M\cap\partial P_n\subset\Mo$ for $n\neq n'$.

(b) The skeleton $\Mo$ is path connected. Indeed, every plane of $\Mo$ contains points of $\Z^4$, and two points $m,m+e_k\in\Z^4$ are joined by the segment $[m,m+e_k]$, which lies in a plane of $\Mo$: for instance, if $k=1$ then the segment lies in the plane $\{x_3=m_3,\ x_2=m_2\}$. Each $K_n$ is a cone, hence path connected, and it meets $\Mo$. By (a), $\M$ is path connected.

To see that $\Mreg$ is connected, note that by Theorem~\ref{thA}(c) every $y\in\Sing$ has arbitrarily small neighbourhoods $V$ in $\M$ such that $V\setminus\{y\}$ is homeomorphic to $T^2\times(0,1)$, which is connected. Since $\Sing$ is discrete and $\M$ is connected and locally path connected, removing the points of $\Sing$ does not disconnect $\M$.

(c) By Proposition~\ref{pro:transl}, $D_4$ acts on $\M$, and $\M/D_4$ is a compact subset of the torus $\R{4}/D_4$, which is a minimal hypersurface away from the images of the singular points. Since $\Sing/D_4$ consists of the three classes of $0$, $e_1$ and $h$, $\M/D_4$ has exactly three singular points, each with a neighbourhood homeomorphic to the cone over $T^2$. It is connected by (b).

The regular part $\Mreg/D_4$ is non-orientable. Indeed, otherwise it would be two-sided in the orientable manifold $\R{4}/D_4$, so it would carry a continuous unit normal field; its lift to $\Mreg$ would be a $D_4$-invariant continuous unit normal field $N$. Since $\Mreg$ is connected, $N=\pm DS/|DS|$ everywhere. But the translation by $e_1+e_2\in D_4$ maps $DS/|DS|$ to $-DS/|DS|$ by Proposition~\ref{pro:transl}, a contradiction.

\emph{The cell structure.} To compute the homology we use the cell structure given by (a). Put
$$
X=\M/D_4,\qquad X_0=\Mo/D_4\subset X .
$$
We decompose $\Mo$ into the vertices $m\in\Z^4$, the unit edges $[m,m+e_k]$, and the unit squares $m+[0,1]e_i+[0,1]e_j$ with $\{i,j\}$ in the set of \emph{mixed pairs}
$$
\mathcal{P}=\bigl\{\{1,2\},\{1,4\},\{2,3\},\{3,4\}\bigr\},
$$
each square being oriented by $(e_i,e_j)$, $i<j$. Each torus $T_n$ is then a subcomplex with $16$ vertices, $32$ edges and $16$ squares, and every cone $K_n$, $n\in D_4$, is decomposed into its vertex $n+h$ and the cones over the cells of $T_n$. This decomposition is invariant under $D_4$, which acts freely on the cells, so it induces a cell decomposition of $X$. Since $\Z^4/D_4$ has two elements and $D_4/D_4$ has one element, $X$ has $3$ vertices, $8+16=24$ edges, $8+32=40$ two-cells and $16$ three-cells. Hence
$$
\chi(\M/D_4)=3-24+40-16=3 .
$$
Moreover, all cones $K_n$ are translates of $K_0$, and the interiors of different cones lie in different open cubes. Therefore
$$
X=X_0\cup_{\varphi}\Cone(T_0),
$$
where $\varphi:T_0\to X_0$ is the restriction of the projection $\R{4}\to\R{4}/D_4$.

\emph{The complex $X_0$.} Since $e_k\notin D_4$, the subcomplex $X_0$ has two vertices $v_0=[0]$ and $v_1=[e_1]$, eight edges $E_k^\varepsilon$ (the images of the edges $[m,m+e_k]$ with $m\equiv\varepsilon e_1$ modulo $D_4$, where $\varepsilon\in\{0,1\}$), and eight squares $S_{ij}^\varepsilon$, $\{i,j\}\in\mathcal{P}$, defined in the same way. The edge $E_k^\varepsilon$ goes from $v_\varepsilon$ to $v_{1-\varepsilon}$, and a direct inspection of the four sides of a square gives
$$
\partial E_k^\varepsilon=v_{1-\varepsilon}-v_\varepsilon,\qquad
\partial S_{ij}^\varepsilon=E_i^\varepsilon+E_j^{1-\varepsilon}-E_i^{1-\varepsilon}-E_j^{\varepsilon}=(-1)^\varepsilon(d_i-d_j),
$$
where $d_k=E_k^0-E_k^1$.

Let $\delta:C_1(X_0)\to\Z^4$ be the \emph{displacement}, $\delta(E_k^\varepsilon)=e_k$. A $1$-chain $\sum a_k^\varepsilon E_k^\varepsilon$ is a cycle if and only if $\sum_ka_k^0=\sum_ka_k^1$, and then $\delta$ of it has even coordinate sum, i.e. lies in $D_4$. The kernel of $\delta$ on cycles consists of the chains with $a_k^1=-a_k^0$ and $\sum_ka^0_k=0$, i.e. of the combinations $\sum c_kd_k$ with $\sum c_k=0$. Since the mixed pairs form the cycle $1-2-3-4-1$ on the index set, these combinations are exactly the boundaries $\partial C_2(X_0)$. Finally, $\delta$ is onto $D_4$, because $\delta(E_i^0+E_j^1)=e_i+e_j$ and $\delta(E_i^0-E_j^0)=e_i-e_j$. Hence
$$
\delta_*:H_1(X_0)\xrightarrow{\ \cong\ }D_4\cong\Z^4 ,
$$
and $\delta_*$ is the map induced by the inclusion $X_0\subset\R{4}/D_4$, under the identification $H_1(\R{4}/D_4)=D_4$.

A $2$-chain $\sum b_{ij}^\varepsilon S_{ij}^\varepsilon$ is a cycle if and only if $\sum_{ij}(b_{ij}^0-b_{ij}^1)(d_i-d_j)=0$. The only relation between the four vectors $d_1-d_2$, $d_2-d_3$, $d_3-d_4$, $d_1-d_4$ is that the sum of the first three equals the fourth. Therefore $H_2(X_0)=Z_2(X_0)$ is free of rank $5$, with the basis
$$
\tau_{ij}=S_{ij}^0+S_{ij}^1\quad(\{i,j\}\in\mathcal{P}),\qquad
\rho=S_{12}^0+S_{23}^0+S_{34}^0-S_{14}^0 .
$$
Geometrically, $\tau_{ij}$ is the class of the $2$-torus which is the image of a plane of $\Mo$ spanned by $e_i,e_j$.

\emph{The attaching map.} The torus $T_0=\partial Q^{13}_0\times\partial Q^{24}_0$ carries the fundamental class $[\partial Q^{13}_0]\times[\partial Q^{24}_0]$, where $[\partial Q^{13}_0]$ is the sum of the four edges of the square $[0,1]e_1+[0,1]e_3$ oriented counterclockwise, and similarly for $\partial Q^{24}_0$. Multiplying out these sums of edges and projecting each of the resulting $16$ squares to $X_0$, one finds that every square $S_{ij}^\varepsilon$ is hit twice, with the same sign:
$$
\varphi_*[T_0]=2\bigl(S_{12}^0-S_{14}^0+S_{23}^0+S_{34}^0\bigr)-2\bigl(S_{12}^1-S_{14}^1+S_{23}^1+S_{34}^1\bigr)
=2\zeta,
$$
where
$$
\zeta=2\rho-\tau_{12}-\tau_{23}-\tau_{34}+\tau_{14}
$$
is a primitive element of $H_2(X_0)$. On the other hand, $H_1(T_0)$ is generated by the loops $\partial Q^{13}_0\times\{pt\}$ and $\{pt\}\times\partial Q^{24}_0$, whose displacement is zero; hence $\varphi_*=0$ on $H_1(T_0)$.

\begin{figure}[ht]
\centering

\begin{tikzpicture}[x=1.55cm,y=1.55cm]
\fill[gray!12] (0,0) rectangle (1,1);
\node at (0.5,0.5) {$+S^{0}_{12}$};
\fill[gray!40] (1,0) rectangle (2,1);
\node at (1.5,0.5) {$-S^{1}_{23}$};
\fill[gray!40] (2,0) rectangle (3,1);
\node at (2.5,0.5) {$-S^{1}_{12}$};
\fill[gray!12] (3,0) rectangle (4,1);
\node at (3.5,0.5) {$+S^{0}_{23}$};
\fill[gray!40] (0,1) rectangle (1,2);
\node at (0.5,1.5) {$+S^{1}_{14}$};
\fill[gray!12] (1,1) rectangle (2,2);
\node at (1.5,1.5) {$+S^{0}_{34}$};
\fill[gray!12] (2,1) rectangle (3,2);
\node at (2.5,1.5) {$-S^{0}_{14}$};
\fill[gray!40] (3,1) rectangle (4,2);
\node at (3.5,1.5) {$-S^{1}_{34}$};
\fill[gray!40] (0,2) rectangle (1,3);
\node at (0.5,2.5) {$-S^{1}_{12}$};
\fill[gray!12] (1,2) rectangle (2,3);
\node at (1.5,2.5) {$+S^{0}_{23}$};
\fill[gray!12] (2,2) rectangle (3,3);
\node at (2.5,2.5) {$+S^{0}_{12}$};
\fill[gray!40] (3,2) rectangle (4,3);
\node at (3.5,2.5) {$-S^{1}_{23}$};
\fill[gray!12] (0,3) rectangle (1,4);
\node at (0.5,3.5) {$-S^{0}_{14}$};
\fill[gray!40] (1,3) rectangle (2,4);
\node at (1.5,3.5) {$-S^{1}_{34}$};
\fill[gray!40] (2,3) rectangle (3,4);
\node at (2.5,3.5) {$+S^{1}_{14}$};
\fill[gray!12] (3,3) rectangle (4,4);
\node at (3.5,3.5) {$+S^{0}_{34}$};
\foreach \t in {1,2,3} {\draw[thin] (\t,0)--(\t,4); \draw[thin] (0,\t)--(4,\t);}
\draw[very thick] (0,0) rectangle (4,4);
\draw[very thick,->] (1.85,0)--(2.15,0); \draw[very thick,->] (1.85,4)--(2.15,4);
\draw[very thick,->>] (0,1.85)--(0,2.15); \draw[very thick,->>] (4,1.85)--(4,2.15);
\node[below,font=\scriptsize] at (0.5,-0.05) {$e_1,\,x_3{=}0$};
\node[below,font=\scriptsize] at (1.5,-0.05) {$e_3,\,x_1{=}1$};
\node[below,font=\scriptsize] at (2.5,-0.05) {$e_1,\,x_3{=}1$};
\node[below,font=\scriptsize] at (3.5,-0.05) {$e_3,\,x_1{=}0$};
\node[left,font=\scriptsize] at (-0.08,0.5) {$e_2,\,x_4{=}0$};
\node[left,font=\scriptsize] at (-0.08,1.5) {$e_4,\,x_2{=}1$};
\node[left,font=\scriptsize] at (-0.08,2.5) {$e_2,\,x_4{=}1$};
\node[left,font=\scriptsize] at (-0.08,3.5) {$e_4,\,x_2{=}0$};
\node[below] at (2,-0.45) {$\partial Q^{13}_0$};
\node[rotate=90] at (-1.25,2) {$\partial Q^{24}_0$};
\begin{scope}[shift={(6.3,0.6)},x=1cm,y=1cm]
\node[draw,circle,minimum size=0.85cm,fill=white] (C1) at (0,2.8) {$C_1$};
\node[draw,circle,minimum size=0.85cm,fill=white] (C2) at (2.8,2.8) {$C_2$};
\node[draw,circle,minimum size=0.85cm,fill=white] (C3) at (2.8,0) {$C_3$};
\node[draw,circle,minimum size=0.85cm,fill=white] (C4) at (0,0) {$C_4$};
\draw[very thick] (C1)--node[above] {$\tau_{12}$} (C2);
\draw[very thick] (C2)--node[right] {$\tau_{23}$} (C3);
\draw[very thick] (C3)--node[below] {$\tau_{34}$} (C4);
\draw[very thick] (C1)--node[left] {$\tau_{14}$} (C4);
\end{scope}
\end{tikzpicture} 

\caption{The cell structure of $\M/D_4$. Left: the torus $T_0=\partial Q^{13}_0\times\partial Q^{24}_0$, cut open along two circles (opposite sides are identified as indicated by the arrows), with its $16$ squares labelled by their images $\pm S^{\varepsilon}_{ij}$ in the skeleton $X_0=\Mo/D_4$; the shading indicates $\varepsilon\in\{0,1\}$. Each of the $8$ squares of $X_0$ appears exactly twice, with the same sign, in diagonally opposite positions. The space $\M/D_4$ is obtained from $X_0$ by attaching the cone over this torus. Right: the skeleton $X_0$ as a ``necklace'' of four $2$-tori $\tau_{12},\tau_{23},\tau_{34},\tau_{14}$ (the images of the planes of $\Mo$); two tori joined by a segment intersect along the circle $C_k=E^0_k\cup E^1_k$ of their common direction $e_k$, and two opposite tori meet only at the two vertices of $X_0$.}\label{fig:cells}
\end{figure}

\emph{The homology of $X$.}
We now combine the information on $X_0$ and on the attaching map. Recall that $X$ is obtained from $X_0$ by attaching the cone over the torus $T_0$ along the map $\varphi:T_0\to X_0$, that is,
$$
X=X_0\cup_\varphi\Cone(T_0),
$$
where the base $T_0\times\{0\}$ of the cone is glued to $X_0$ by $\varphi$. We use the long exact homology sequence of the pair $(X,X_0)$ (all homology groups have integer coefficients, and $\tilde H$ denotes reduced homology).
The argument involves only standard facts of algebraic topology---the exact sequence of a pair, excision, and cellular homology; see, e.g., \cite[Ch.~2]{Hatcher}.

\emph{Step 1: the relative groups.} By excision, the relative homology of $(X,X_0)$ does not change if we cut out a collar of $X_0$; hence it coincides with the relative homology of the pair $(\Cone T_0,T_0)$. Since the cone is contractible, the long exact sequence of this pair gives
$$
H_k(X,X_0)\cong H_k(\Cone T_0,T_0)\cong\tilde H_{k-1}(T_0),\qquad k\ge1 .
$$
Thus $H_3(X,X_0)\cong H_2(T_0)\cong\Z$, $H_2(X,X_0)\cong H_1(T_0)\cong\Z^2$ and $H_1(X,X_0)\cong\tilde H_0(T_0)=0$.

\emph{Step 2: the exact sequence.} Under these isomorphisms, the connecting homomorphism $H_k(X,X_0)\to H_{k-1}(X_0)$ of the pair becomes the map
$$
\varphi_*:H_{k-1}(T_0)\to H_{k-1}(X_0)
$$
induced by the attaching map. (Geometrically: a relative cycle in $X$ is a cone over a cycle $c$ of $T_0$, and its boundary is the image $\varphi(c)$ in $X_0$.) Since $X_0$ is a $2$-dimensional complex, $H_3(X_0)=0$, and the long exact sequence of the pair $(X,X_0)$ reads
\begin{multline*}
0\to H_3(X)\to H_2(T_0)\xrightarrow{\ \varphi_*\ }H_2(X_0)\to H_2(X)\to H_1(T_0)\xrightarrow{\ \varphi_*\ }H_1(X_0)\to H_1(X)\to 0 .
\end{multline*}
We have already computed all ingredients: $H_2(X_0)\cong\Z^5$ with the basis $\tau_{12},\tau_{23},\tau_{34},\tau_{14},\rho$; $H_1(X_0)\cong D_4\cong\Z^4$ via the displacement $\delta_*$; $\varphi_*[T_0]=2\zeta$; and $\varphi_*=0$ on $H_1(T_0)$.

\emph{Step 3: $H_3(X)=0$.} By exactness, $H_3(X)$ is the kernel of $\varphi_*:H_2(T_0)\to H_2(X_0)$. The group $H_2(T_0)\cong\Z$ is generated by the fundamental class $[T_0]$, and $\varphi_*[T_0]=2\zeta\neq0$ in the free group $H_2(X_0)$. Hence this map is injective and $H_3(X)=0$. (This agrees with the non-orientability of $\Mreg/D_4$.)

\emph{Step 4: $H_1(X)$.} Since $\varphi_*=0$ on $H_1(T_0)$, the end of the exact sequence shows that the map $H_1(X_0)\to H_1(X)$ induced by the inclusion is injective; it is surjective because $H_1(X,X_0)=0$. Thus
$$
H_1(X)\cong H_1(X_0)\cong D_4\cong\Z^4 .
$$
Moreover, $\delta_*$ is induced by the inclusion $X_0\subset\R{4}/D_4$, so the inclusion $X\subset\R{4}/D_4$ also induces an isomorphism $H_1(X)\cong H_1(\R{4}/D_4)$.

\emph{Step 5: $H_2(X)$.} The middle part of the exact sequence gives the short exact sequence
$$
0\to \operatorname{coker}\bigl(\varphi_*:H_2(T_0)\to H_2(X_0)\bigr)\to H_2(X)\to\ker\bigl(\varphi_*:H_1(T_0)\to H_1(X_0)\bigr)\to0 ,
$$
that is,
$$
0\to H_2(X_0)/\langle2\zeta\rangle\to H_2(X)\to H_1(T_0)\to0 .
$$
To compute the cokernel, recall that $\zeta=2\rho-\tau_{12}-\tau_{23}-\tau_{34}+\tau_{14}$ is \emph{primitive}, i.e. it is not a nontrivial multiple of another element of $H_2(X_0)$; for instance, its coefficient at $\tau_{12}$ equals $-1$. Hence $\zeta$ can be completed to a basis $\zeta,\tau_{23},\tau_{34},\tau_{14},\rho$ of $H_2(X_0)\cong\Z^5$, and in this basis the subgroup $\langle2\zeta\rangle$ is generated by twice the first basis vector. Therefore
$$
H_2(X_0)/\langle 2\zeta\rangle\cong\Z_2\oplus\Z^4 .
$$
Since the right-hand term $H_1(T_0)\cong\Z^2$ is free, the short exact sequence splits, and we conclude that
$$
H_2(X)\cong\Z^4\oplus\Z_2\oplus\Z^2=\Z^6\oplus\Z_2 .
$$
As a check, the Euler characteristic computed from these groups is $\chi(X)=1-4+6-0=3$, in accordance with the count of cells above.

(d) By Proposition~\ref{pro:transl}, the translations by $D_4^+$ preserve $S$ and hence the unit normal $DS/|DS|$, which therefore descends to $\Mreg/D_4^+$; thus $\Mreg/D_4^+$ is orientable. Since $D_4^+$ has index two in $D_4$ and $D_4/D_4^+$ acts freely on $\R{4}/D_4^+$, the projection $\M/D_4^+\to\M/D_4$ is a double cover, and $\chi(\M/D_4^+)=2\chi(\M/D_4)=6$. Every $2$-cell of the induced cell decomposition of $\M/D_4^+$ lies in the boundary of exactly two $3$-cells (as in the proof of (c)), and the regular part is connected and orientable. Hence the $3$-cycles are exactly the integer multiples of the fundamental cycle, and $H_3(\M/D_4^+)\cong\Z$. \qed

\begin{remark}
Let $N$ be the compact $3$-manifold with boundary obtained from $\M/D_4$ by removing small open conical neighbourhoods of the three singular points. Then $\partial N$ consists of three tori, and $\chi(N)=\chi(\M/D_4)-3=0$, in accordance with the fact that $\chi(N)=\frac12\chi(\partial N)$ for every compact $3$-manifold. The space $\M/D_4$ is obtained from $N$ by coning off each boundary torus.
\end{remark}

\begin{remark}[On the non-orientability of $\Mreg/D_4$]\label{rem:balance}
The non-orientability in Theorem~\ref{thC}(c) is a property of the quotient, not of $\M$ itself. In $\R{4}$ the regular part $\Mreg$ is orientable, with the global unit normal $DS/|DS|$, and $\M$ separates $\R{4}$ into the two open sets
$$
\Omega_+=\{S>0\},\qquad \Omega_-=\{S<0\}.
$$
By Proposition~\ref{pro:transl}, the translations by $D_4^+$ preserve $S$, and hence each of $\Omega_\pm$, while the translations by $D_4\setminus D_4^+$ change the sign of $S$ and interchange $\Omega_+$ and $\Omega_-$. Thus the two sides of $\M$ are congruent, and some of the periods of $\M$ exchange them. In the quotient by $D_4^+$ the two sides remain distinct, and $\M/D_4^+$ is a two-sided hypersurface which divides the torus $\R{4}/D_4^+$ into two congruent halves. In the quotient by the full lattice $D_4$ the two sides are identified, so that no unit normal can be chosen consistently and $\Mreg/D_4$ is one-sided, hence non-orientable.

The same phenomenon is familiar for triply periodic minimal surfaces in $\R{3}$. For instance, Schwarz's P-surface (approximated by the level set $\cos x+\cos y+\cos z=0$) divides space into two congruent labyrinths, which are interchanged by a translation preserving the surface, here by $(\pi,\pi,\pi)$. Its quotient by the lattice $2\pi\Z^3$ is an orientable surface of genus $3$, while its quotient by the full translation group, which also contains $(\pi,\pi,\pi)$, is one-sided. Triply periodic surfaces whose two sides are interchanged by a symmetry of the surface are called \emph{balance surfaces} in the crystallographic literature \cite{FischerKoch}; besides the P-surface, Schwarz's D-surface and Schoen's gyroid are of this kind. In this terminology, $\M$ is a balance hypersurface: the translations by $D_4\setminus D_4^+$ play the role of the translation $(\pi,\pi,\pi)$ for the P-surface. The same self-duality is visible elsewhere in the paper, for instance in the relation $\Theta(x+n)=(-1)^{n_1+n_3}\Theta(x)$ of Theorem~\ref{thm:theta} and in the identity $\Sigma_{c+\frac12}=\Sigma_c+(\frac12,\frac12,\frac12)$ for the sections of Proposition~\ref{pro:sections}, where the translation also interchanges the two sides.
\end{remark}

\section{Stability, Morse index and the level-set mechanism}\label{sec:index}

In this section we discuss two further aspects of $\M$. First, we show that $\M$ is very far from being stable: every neighbourhood of a singular point carries infinitely many independent destabilizing variations, so that the Morse index of the compact quotient $\M/D_4$ is infinite. Second, we explain the minimality of $\M$ by a factorization of the mean curvature operator applied to the hole-centred defining function of Section~\ref{sec:hole}.

\subsection{The second variation}\label{ssec:secondvariation}
Let $\Omega\subset\Mreg$ be an open set on which a unit normal field $\nu$ is defined; for instance, $\Omega=\Mreg$ with $\nu=DS/|DS|$. For $\varphi\in C_c^\infty(\Omega)$, the second variation of area in the direction of the normal variation $\varphi\nu$ is the quadratic form
\begin{equation}\label{Qform}
\mathcal{Q}_\Omega(\varphi)=\int_\Omega\bigl(|\nabla\varphi|^2-|A|^2\varphi^2\bigr)\,d\mathcal{H}^3,
\end{equation}
where $\nabla$ is the gradient along $\Mreg$ and $A$ is its second fundamental form \cite{SimonGMT}. The \emph{Morse index} of $\Omega$ is the supremum of the dimensions of the linear subspaces of $C_c^\infty(\Omega)$ on which $\mathcal{Q}_\Omega$ is negative definite. Since the variations are required to vanish near the singular set, this definition makes sense for $\Mreg$ itself, and also for the regular part of $\M/D_4$, where we use variations supported in small open sets on which a unit normal is defined (recall that $\Mreg/D_4$ is one-sided).

Two simple observations will be used. If $\varphi_1,\dots,\varphi_m$ have pairwise disjoint supports and $\mathcal{Q}(\varphi_k)<0$ for all $k$, then
$$
\mathcal{Q}\Bigl(\sum_k c_k\varphi_k\Bigr)=\sum_k c_k^2\,\mathcal{Q}(\varphi_k)<0\qquad\text{whenever}\ (c_1,\dots,c_m)\neq0,
$$
so that $\mathcal{Q}$ is negative definite on the span of $\varphi_1,\dots,\varphi_m$. Moreover, under homotheties $\mathcal{Q}$ changes only by a positive factor: if $\M_r=(\M-y)/r$ and $\tilde\varphi(z)=\varphi(y+rz)$, then $|\nabla\tilde\varphi|^2$ and $|A_{\M_r}|^2$ are multiplied by $r^2$, while $d\mathcal{H}^3$ is multiplied by $r^{-3}$, so that
\begin{equation}\label{Qscaling}
\mathcal{Q}_{\M}(\varphi)=r\,\mathcal{Q}_{\M_r}(\tilde\varphi).
\end{equation}

\subsection{The Clifford cone}
Let $C$ be the Clifford cone \eqref{Clifford}. Its regular part $C\setminus\{0\}$ is the cone over the Clifford torus $\Sigma=S^1(\frac1{\sqrt2})\times S^1(\frac1{\sqrt2})\subset S^3$, i.e. it is identified with $(0,\infty)\times\Sigma$ with the metric $dr^2+r^2g_\Sigma$ and the area element $r^2\,dr\,d\mathcal{H}^2_\Sigma$. The Clifford torus is a minimal surface in $S^3$ with principal curvatures $\pm1$, so that $|A_\Sigma|^2=2$ and
$$
|A_C|^2=\frac{|A_\Sigma|^2}{r^2}=\frac{2}{r^2}.
$$

\begin{lemma}\label{lem:conevariation}
Let $v$ be a smooth function on $\R{}$ with compact support, and put
$$
f(r)=r^{-1/2}\,v(\log r),\qquad r>0 .
$$
Then the function $\varphi(r,\omega)=f(r)$ on $C\setminus\{0\}$ satisfies
$$
\mathcal{Q}_C(\varphi)=|\Sigma|\int_{\R{}}\Bigl(v'(t)^2-\frac74\,v(t)^2\Bigr)\,dt,\qquad |\Sigma|=2\pi^2 .
$$
\end{lemma}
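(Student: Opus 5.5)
This is a direct computation in polar coordinates on the cone, followed by the logarithmic substitution $t=\log r$. Since $\varphi$ depends only on $r$, its gradient along $C\setminus\{0\}$ is $f'(r)\partial_r$, so $|\nabla\varphi|^2=f'(r)^2$. We use the area element $r^2\,dr\,d\mathcal{H}^2_\Sigma$ and $|A_C|^2=2/r^2$. The angular integral then factors out as $|\Sigma|=2\pi^2$, the area of the Clifford torus computed in the proof of Theorem~\ref{thA}(d). Hence
$$
\mathcal{Q}_C(\varphi)=|\Sigma|\int_0^\infty\bigl(f'(r)^2r^2-2f(r)^2\bigr)\,dr .
$$
This is finite, because $v$ has compact support and so $f$ is supported in a compact subinterval of $(0,\infty)$.

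Next substitute $r=e^t$, so that $dr=e^t\,dt$ and $f(r)=e^{-t/2}v(t)$. The zeroth-order term becomes $2f^2\,dr=2e^{-t}v^2e^t\,dt=2v(t)^2\,dt$. For the gradient term, $f'(r)=r^{-3/2}\bigl(v'(\log r)-\tfrac12v(\log r)\bigr)$, which gives
$$
f'(r)^2r^2\,dr=r^{-1}\bigl(v'-\tfrac12 v\bigr)^2 e^t\,dt=\bigl(v'-\tfrac12v\bigr)^2dt .
$$
Expanding the square, we get $v'^2-vv'+\tfrac14v^2$. The cross term is $-\tfrac12(v^2)'$, which integrates to zero because $v$ has compact support.

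Collecting the terms gives
$$
\mathcal{Q}_C(\varphi)=|\Sigma|\int_{\R{}}\Bigl(v'^2+\tfrac14v^2-2v^2\Bigr)dt=|\Sigma|\int_{\R{}}\Bigl(v'^2-\tfrac74v^2\Bigr)dt .
$$
The only step needing care is the factorization $|\nabla\varphi|^2=f'(r)^2$. It holds because $\partial_r$ is a unit tangent vector to the cone and $\varphi$ is constant on each slice $\{r\}\times\Sigma$. The rest is bookkeeping of the powers of $r$; the weight $r^{-1/2}$ is chosen precisely so that all powers of $e^t$ cancel.
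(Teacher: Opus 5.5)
Your proposal is correct and follows essentially the same route as the paper's proof: you factor out $|\Sigma|$ using $|\nabla\varphi|^2=f'(r)^2$ and $|A_C|^2=2/r^2$, substitute $r=e^t$, and kill the cross term $-vv'$ by compact support. The result is $\int(v'^2+\tfrac14v^2)\,dt-2\int v^2\,dt$, exactly as in the paper.
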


\begin{proof}
Since $\varphi$ does not depend on $\omega\in\Sigma$, we have $|\nabla\varphi|^2=f'(r)^2$, and therefore
$$
\mathcal{Q}_C(\varphi)=|\Sigma|\int_0^\infty\Bigl(f'(r)^2-\frac{2}{r^2}f(r)^2\Bigr)r^2\,dr .
$$
Substitute $r=e^t$. Then $f'(r)=r^{-3/2}\bigl(v'(t)-\frac12v(t)\bigr)$ and $dr=r\,dt$, so that
$$
\int_0^\infty f'^2\,r^2\,dr=\int_{\R{}}\Bigl(v'-\frac12v\Bigr)^2dt=\int_{\R{}}\Bigl(v'^2+\frac14v^2\Bigr)dt,
$$
because $\int vv'\,dt=\frac12\int(v^2)'\,dt=0$. Similarly, $\int_0^\infty 2f^2\,dr=2\int_{\R{}}v^2\,dt$. Subtracting, we obtain the formula.
\end{proof}

The number $\frac74=2-\frac14$ is the difference between the value $|A_\Sigma|^2=2$ and the constant $\frac14=\frac{(3-2)^2}{4}$ of the Hardy inequality in dimension three; its positivity is Simons' instability criterion for the Clifford cone \cite{Simons}. Lemma~\ref{lem:conevariation} makes the instability quantitative. Fix a smooth function $v_0$ with support in $(-3,0)$ such that
\begin{equation}\label{v0}
\int_{\R{}}v_0'^2\,dt<\frac74\int_{\R{}}v_0^2\,dt .
\end{equation}
For instance, one may take a smooth approximation of the function equal to $\sin(\pi t/3)$ on $[-3,0]$ and to $0$ elsewhere, for which the ratio of the two integrals is $\pi^2/9<\frac74$. Put
$$
f_0(r)=r^{-1/2}v_0(\log r),\qquad \delta_0=-\mathcal{Q}_C(f_0)>0 .
$$
The function $f_0$ is supported in the annulus $e^{-3}<r<1$.

\subsection{Infinite Morse index}

\begin{theorem}\label{thm:index}
Let $y\in\Sing$ and let $V$ be any neighbourhood of $y$ in $\R{4}$. Then there are functions $\varphi_1,\varphi_2,\ldots\in C_c^\infty(\Mreg\cap V)$ with pairwise disjoint supports such that $\mathcal{Q}(\varphi_k)<0$ for all $k$. In particular:
\begin{enumerate}
\item[(i)] the Morse index of $\Mreg\cap V$ is infinite for every neighbourhood $V$ of a singular point;
\item[(ii)] the Morse index of the compact minimal hypersurface $\M/D_4$ in the flat torus $\R{4}/D_4$, and that of $\M/D_4^+$, is infinite.
\end{enumerate}
\end{theorem}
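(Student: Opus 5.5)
The plan is to transplant the destabilizing radial variation $f_0$ of Lemma~\ref{lem:conevariation} from the tangent cone $C_y$ to $\M$ at a sequence of dyadic-like scales $r_k\to0$, using the analytic chart of Theorem~\ref{thA}(c) and the scaling law \eqref{Qscaling}. Fix $y\in\Sing$ and the chart $\Phi=y+\Psi$ on $B_\rho$, with the rescaled maps $\Phi_r(z)=\Psi(rz)/r$ from Step~1 of the proof of Theorem~\ref{thA}(d). By \eqref{rescaledchart}, $\M_r=(\M-y)/r$ coincides near the annulus $\{e^{-3}<|z|<1\}$ with $\Phi_r(C_y)$, and $\Phi_r\to\Id$ in $C^1$. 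Since $\Psi$ is real analytic, we in fact have $\Phi_r\to\Id$ in $C^2$ (indeed $D^2\Phi_r(z)=r\,D^2\Psi(rz)=O(r)$). On the annulus, the regular part of $C_y$ is smooth and bounded away from the vertex, so $\M_r\cap\{e^{-4}<|z|<2\}$ is a $C^2$-small normal graph over $C_y$ there, with graph function tending to $0$ in $C^2$.

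Next, define $\tilde\varphi_r=f_0\circ\Phi_r^{-1}$ on $\M_r$. This is a smooth compactly supported function supported in $\Phi_r(\{e^{-3}<|z|<1\})$, which avoids the singular point. Pulling back by $\Phi_r$, the metric, the area element, and $|A_{\M_r}|^2$ converge uniformly on the support to those of $C_y$; convergence of $|A|^2$ is exactly where $C^2$ convergence is needed. Hence $\mathcal{Q}_{\M_r}(\tilde\varphi_r)\to\mathcal{Q}_{C_y}(f_0)=-\delta_0$, since $C_y$ is congruent to $C$. Choose $\bar r>0$ such that $\mathcal{Q}_{\M_r}(\tilde\varphi_r)<-\delta_0/2$ for all $r<\bar r$. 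Then set $\varphi_r(x)=\tilde\varphi_r((x-y)/r)$; by \eqref{Qscaling}, $\mathcal{Q}_\M(\varphi_r)=r\,\mathcal{Q}_{\M_r}(\tilde\varphi_r)<0$. The support of $\varphi_r$ lies in the shell $\{re^{-3}(1-o(1))<|x-y|<r(1+o(1))\}$. Taking $r_k=\bar r'e^{-4k}$, with $\bar r'$ small enough that all shells lie in $V$ and stay within $B_\rho$, gives pairwise disjoint supports contained in $\Mreg\cap V$.

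Statement (i) then follows from the disjoint-support observation at the start of \S\ref{ssec:secondvariation}. For (ii), take $V$ to be a ball small enough that it injects into $\R{4}/D_4$ (and into $\R{4}/D_4^+$). On such a ball the unit normal $DS/|DS|$ is defined, so the $\varphi_k$ descend to admissible variations of the quotient with the same values of $\mathcal{Q}$. The one-sidedness of the quotient is irrelevant here because the supports are local.

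The main obstacle I expect is the rigorous justification of the $C^2$ convergence of the second fundamental form. Step~1 of the proof of Theorem~\ref{thA}(d) only recorded $C^1$ convergence. I would settle this via the explicit estimate $\|D^2\Phi_r\|=O(r)$ together with the formula for $|A|$ of an image hypersurface in terms of the first two derivatives of the map.
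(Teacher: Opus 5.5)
Your proposal is correct and follows essentially the paper's own proof. You transplant $f_0$ to $\M_r=(\M-y)/r$ via the rescaled charts $\Phi_r$, use $D^2\Phi_r(z)=r\,D^2\Psi(rz)=O(r)$ to get $C^2$ convergence and hence $\mathcal{Q}_{\M_r}(\tilde\varphi_r)\to-\delta_0$, then pull back at geometrically decreasing scales via \eqref{Qscaling} and descend to the quotient on a small ball. The only difference is cosmetic: you space the scales by $e^{-4}$ and separate the supports by radial shells, whereas the paper takes $r_k=e^{-3k}r_0$ and relies on the injectivity of $\Phi$; both work.
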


\begin{proof}
Let $\Phi=y+\Psi$ be the analytic chart of Theorem~\ref{thA}(c), defined on a ball $B_\rho$, and let $\Phi_r(z)=\Psi(rz)/r$ be the rescaled charts of Step~1 in the proof of Theorem~\ref{thA}(d). Since $\Psi$ is real analytic with $\Psi(0)=0$ and $d\Psi(0)=\Id$, we have, uniformly for $z\in B_3$,
$$
\Phi_r(z)-z=O(r),\qquad d\Phi_r(z)-\Id=O(r),\qquad D^2\Phi_r(z)=r\,D^2\Psi(rz)=O(r).
$$
Thus $\Phi_r\to\Id$ in $C^2(B_3)$ as $r\to0$. By \eqref{rescaledchart}, $\Phi_r$ maps $C_y\cap B_3$ onto a neighbourhood of $0$ in $\M_r=(\M-y)/r$. Since $C_y$ is congruent to $C$, we may and shall replace $C$ by $C_y$ in Lemma~\ref{lem:conevariation} and in the definition of $f_0$.

Consider the compact piece $K=\{z\in C_y:\ e^{-3}\le|z|\le1\}$ of the regular part of the cone. On $\Phi_r(K)\subset\M_r$ we use $\Phi_r$ as a parametrization. The induced metric, the area element and the second fundamental form of $\M_r$, pulled back to $K$ by $\Phi_r$, depend on the first and second derivatives of $\Phi_r$ only. Since $\Phi_r\to\Id$ in $C^2$, they converge uniformly on $K$ to the corresponding quantities of $C_y$. Therefore, for the function $\tilde\varphi_r=f_0\circ|\cdot|\circ\Phi_r^{-1}$, which is supported in $\Phi_r(K)$,
$$
\mathcal{Q}_{\M_r}(\tilde\varphi_r)\longrightarrow\mathcal{Q}_{C_y}(f_0)=-\delta_0\qquad(r\to0).
$$
Choose $r_0>0$ such that $\mathcal{Q}_{\M_r}(\tilde\varphi_r)<-\delta_0/2$ for $0<r\le r_0$, and such that $y+r_0\Phi_{r_0}(B_3)=\Phi(r_0B_3)\subset V$.

Now put $r_k=e^{-3k}r_0$, $k=0,1,2,\dots$, and define functions on $\Mreg$ by
$$
\varphi_k(x)=\tilde\varphi_{r_k}\Bigl(\frac{x-y}{r_k}\Bigr).
$$
The support of $\varphi_k$ is $y+r_k\Phi_{r_k}(K)=\Phi(r_kK)$, which is contained in $\Phi(\{z\in C_y:\ e^{-3}r_k\le|z|\le r_k\})$. These sets are pairwise disjoint, because the annuli $e^{-3(k+1)}r_0\le|z|\le e^{-3k}r_0$ overlap at most along their boundary spheres, where $f_0$ vanishes, and $\Phi$ is injective. All supports lie in $\Phi(r_0B_3)\subset V$ and avoid $y$. By \eqref{Qscaling},
$$
\mathcal{Q}_{\M}(\varphi_k)=r_k\,\mathcal{Q}_{\M_{r_k}}(\tilde\varphi_{r_k})<-\frac{r_k\delta_0}{2}<0 .
$$
This proves the main statement, and (i) follows from the observation in \S\ref{ssec:secondvariation}. For (ii), choose $V$ so small that the projection $\R{4}\to\R{4}/D_4$ (or $\R{4}\to\R{4}/D_4^+$) is injective on $V$; then the functions $\varphi_k$ descend to the quotient, with disjoint supports in a set on which a unit normal is defined.
\end{proof}

\begin{remark}\label{rem:indexconsequences}
(i) Theorem~\ref{thm:index} sharpens the observation that $\M$ is not stable: the instability is not caused by a single large domain, but is concentrated, with infinite multiplicity, at every singular point. The mechanism is the same as for the Clifford cone itself, whose second variation is negative on infinitely many disjoint annuli shrinking to the vertex.

(ii) The theorem places $\M/D_4$ outside the range of the current variational theory of minimal hypersurfaces in low dimensions. For instance, by Sharp's compactness theorem \cite{Sharp}, a sequence of smooth closed embedded minimal hypersurfaces in a closed $4$-manifold with uniformly bounded area and Morse index subconverges, away from finitely many points, to a smooth closed embedded minimal hypersurface. Thus $\M/D_4$ is not such a limit, and it cannot be obtained by min-max methods, which produce hypersurfaces of bounded index. Likewise, the lower bounds of the Morse index in terms of the first Betti number, proved for smooth closed minimal hypersurfaces of flat tori in \cite{ACS}, have no counterpart here: $\M/D_4$ has $b_1=4$ (Theorem~\ref{thC}) and infinite index.

(iii) The translations of $\R{4}$ give the Jacobi fields $\scal{\nu}{e_i}$, $i=1,\dots,4$, on $\Mreg$, i.e. solutions of $\Delta\varphi+|A|^2\varphi=0$. A deformation theory of $\M$, for instance a statement that these are the only periodic Jacobi fields with suitable decay at the singular points, would require weighted function spaces adapted to the cones, as in \cite{CafHarSi,SmaleN}. Theorem~\ref{thm:index} shows that the choice of weights is delicate, because the Jacobi operator has infinitely many negative directions near each singular point.
\end{remark}

\subsection{The level-set mechanism}\label{ssec:mechanism}
We now return to the hole-centred description of Theorem~\ref{thm:symmetric}. Recall from Remark~\ref{rem:secondproof} the function
$$
\phi(t)=\arctan\gamma^2(t)-\frac\pi8,\qquad \phi'^2=\varpi^2\cos4\phi,\qquad \phi''=-2\varpi^2\sin4\phi,
$$
and the function $u(y)=\sum_{k=1}^4\phi(\xi_k(y))$, whose zero set is $\M-h'$. The following proposition shows that the minimality of $\M$ is caused by a factorization of $\Delta_1u$, in the same way as Lemma~\ref{lem:key} shows it for $S$.

\begin{proposition}\label{pro:mechanism}
Put $\phi_k=\phi(\xi_k)$. Then
\begin{equation}\label{factorization}
\Delta_1u=-\varpi^4\,\sin(2u)\,R,
\end{equation}
where
$$
R=\cos2(\phi_1+\phi_2-\phi_3-\phi_4)+\cos2(\phi_1+\phi_3-\phi_2-\phi_4)+\cos2(\phi_1+\phi_4-\phi_2-\phi_3).
$$
In particular, $\Delta_1u$ vanishes on every level set $\{u=c\}$ with $\sin2c=0$. Since $-\frac\pi2\le u\le\frac\pi2$, these are the level $u=0$, which is $\M-h'$, and the levels $u=\pm\frac\pi2$, which consist of isolated points (the holes).
\end{proposition}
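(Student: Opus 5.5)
We compute $\Delta_1u=|Du|^2\Delta u-\frac12\scal{Du}{D|Du|^2}$ directly for the separable function $u=\sum_k\phi(\xi_k)$. Since $\xi_k=\frac12\scal{y}{f_k}$ with $|f_k|^2=2$, we have $D\xi_k=\frac12 f_k$, and these gradients are pairwise orthogonal with $|D\xi_k|^2=\frac12$. Hence
$$
|Du|^2=\tfrac12\sum_k\phi'(\xi_k)^2,\qquad \Delta u=\tfrac12\sum_k\phi''(\xi_k),\qquad \scal{Du}{D|Du|^2}=\tfrac12\sum_k\phi'(\xi_k)^2\phi''(\xi_k).
$$
Substituting, the diagonal terms cancel, as in Remark~\ref{rem:secondproof}, and we get
$$
\Delta_1u=\tfrac14\sum_{i\ne j}\phi''(\xi_i)\phi'(\xi_j)^2 .
$$
We then insert $\phi'^2=\varpi^2\cos4\phi$ and $\phi''=-2\varpi^2\sin4\phi$. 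With $a_k=4\phi_k$ this gives
$$
\Delta_1u=-\tfrac{\varpi^4}{2}\sum_{i\neq j}\sin a_i\cos a_j=-\tfrac{\varpi^4}{2}\sum_{i<j}\sin(a_i+a_j).
$$

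The remaining step is a purely trigonometric identity. We must show
$$
\sum_{i<j}\sin(a_i+a_j)=2\sin\bigl(\tfrac12 s\bigr)\,R,\qquad s=a_1+\dots+a_4=4u .
$$
Group the six terms into the three complementary pairs $\{i,j\},\{k,l\}$ with $\{i,j,k,l\}=\{1,2,3,4\}$. For each pair, $\sin(a_i+a_j)+\sin(a_k+a_l)=2\sin\frac{s}{2}\cos\frac{a_i+a_j-a_k-a_l}{2}$. Moreover $\frac12(a_i+a_j-a_k-a_l)=2(\phi_i+\phi_j-\phi_k-\phi_l)$, so the three pairs give exactly the three cosines in $R$, and $\sin\frac s2=\sin 2u$. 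Collecting the factors yields $\Delta_1u=-\varpi^4\sin(2u)R$, which is \eqref{factorization}. I would double-check the overall normalization constant, namely the factor $\frac14$ from $|D\xi_k|^2=\frac12$ combined with the $2$ from the sum-to-product formula, since this bookkeeping is the only place an error is likely.

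For the final assertions, $\phi\in[-\frac\pi8,\frac\pi8]$ gives $u\in[-\frac\pi2,\frac\pi2]$, so $\sin2u=0$ exactly when $u\in\{0,\pm\frac\pi2\}$. The level $u=0$ is $\M-h'$ by Theorem~\ref{thm:symmetric}. The level $u=\frac\pi2$ requires $\phi_k=\frac\pi8$ for all $k$, i.e. $\gamma^2(\xi_k)=1$, i.e. $\xi_k\in\Z$. The level $u=-\frac\pi2$ requires $\xi_k\in\frac12+\Z$ for all $k$. Both sets are discrete. To identify them with the holes, we translate back through \eqref{xixx} and compare with Lemma~\ref{lem:sing}, which says $|S|=2$ exactly at $h'+D_4$. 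Equivalently, by Theorem~\ref{thm:symmetric}, $\sum\beta_k\in\{0,\pi\}$ corresponds to the extreme values of $S(h'+y)$. The main obstacle is only this final identification of the lattice points, which is routine.
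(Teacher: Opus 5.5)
Your proposal is correct and is essentially the paper's proof: the same reduction to $\Delta_1u=\frac14\sum_{i\ne j}\phi''(\xi_i)\phi'(\xi_j)^2$, the same substitution $a_k=4\phi_k$, and the same grouping of the six pairs into three complementary couples; the constant you were unsure about checks out, since $\frac14\cdot(-2\varpi^2)\cdot\varpi^2=-\frac{\varpi^4}{2}$ and the sum-to-product step contributes the factor $2$. For the final identification, translating back via \eqref{xixx} directly gives $y\in D_4^+$ on $\{u=\frac\pi2\}$ and $y\in D_4\setminus D_4^+$ on $\{u=-\frac\pi2\}$, which together are exactly the holes; this is cleaner than citing Lemma~\ref{lem:sing}, which as stated only asserts $|S|=2$ on $h'+D_4$ and not that this happens only there.
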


\begin{proof}
The vectors $\frac12f_k$ are pairwise orthogonal and $|\frac12f_k|^2=\frac12$. Hence, by \eqref{xi},
$$
Du=\frac12\sum_k\phi'(\xi_k)f_k,\qquad |Du|^2=\frac12\sum_k\phi'(\xi_k)^2,\qquad \Delta u=\frac12\sum_k\phi''(\xi_k),
$$
and
$$
\scal{Du}{D|Du|^2}=\frac12\sum_k\phi'(\xi_k)^2\phi''(\xi_k).
$$
Substituting this into the definition \eqref{meancurv} of $\Delta_1$, we obtain
$$
\Delta_1u=\frac14\sum_{i\neq j}\phi''(\xi_i)\,\phi'(\xi_j)^2 .
$$
Put $a_k=4\phi_k$, so that $a_1+a_2+a_3+a_4=4u$. Using $\phi'^2=\varpi^2\cos a$ and $\phi''=-2\varpi^2\sin a$, we get
$$
\Delta_1u=-\frac{\varpi^4}{2}\sum_{i\ne j}\sin a_i\cos a_j=-\frac{\varpi^4}{2}\sum_{1\le i<j\le4}\sin(a_i+a_j).
$$
Now we group the six pairs $\{i,j\}$ into the three partitions of $\{1,2,3,4\}$ into two pairs. For a partition $\{i,j\}\sqcup\{k,l\}$,
\begin{align*}
\sin(a_i+a_j)+\sin(a_k+a_l)&=2\sin\frac{a_1+a_2+a_3+a_4}{2}\,\cos\frac{a_i+a_j-a_k-a_l}{2}\\
&=2\sin(2u)\cos2(\phi_i+\phi_j-\phi_k-\phi_l).
\end{align*}
Summing over the three partitions gives \eqref{factorization}. Finally, $u=\frac\pi2$ forces $\phi_k=\frac\pi8$, i.e. $\gamma^2(\xi_k)=1$, for all $k$, and $u=-\frac\pi2$ forces $\gamma^2(\xi_k)=0$ for all $k$; both conditions define discrete sets, which are holes of $\M$ (for example, $u(0)=\frac\pi2$ at $y=0$).
\end{proof}

\begin{remark}\label{rem:mechanism}
The proof of \eqref{factorization} uses the fact that the six pairs of indices in $\{1,2,3,4\}$ split into three complementary couples, i.e. that the complement of a pair is again a pair. This is a feature of four variables. For three variables the analogous sum $\sum_{i<j}\sin(a_i+a_j)$ does not vanish on the planes $a_1+a_2+a_3=\mathrm{const}$ (Remark~\ref{rem:notminimal}), and the same is true for five variables: on $a_1+\dots+a_5=0$, the point $a=(t,t,-2t,0,0)$ gives $\sum_{i<j}\sin(a_i+a_j)=2\sin t-\sin2t$. In this sense the minimality of $\M$, seen from a hole, is a four-dimensional phenomenon, and higher-dimensional analogues, if they exist, require a different mechanism.
\end{remark}

\section{Remarks and open questions}\label{sec:remarks}

\subsection{The lemniscatic catenoid}
The lemniscatic functions also appear in the simplest minimal hypersurface of $\R{4}$ which is not a cone, the catenoid. Indeed, the profile curve $r=r(t)$ of a minimal rotation hypersurface $|(x_1,x_2,x_3)|=r(x_4)$ satisfies $rr''=2(1+r'^2)$, with the first integral $r^4=\kappa^2(1+r'^2)$, where $\sqrt{\kappa}$ is the radius of the neck. For $\kappa=1$ the function $F=r^2$ satisfies $F'^2=4F^3-4F$, which is the equation of $\wp=1/\sinl^2$ (Remark~\ref{rem:wp}). Thus the four-dimensional catenoid with neck radius $1$ is
$$
\bigl\{x\in\R{4}:\ (x_1^2+x_2^2+x_3^2)\,\sinl^2x_4=1,\ \ 0<x_4<\varpi\bigr\},
$$
and its height, i.e. the width of the slab containing it, is exactly the lemniscate constant $\varpi$. Its translates by the multiples of $\varpi e_4$ form a family of pairwise disjoint congruent catenoids, which is the whole zero set of the periodic function $(x_1^2+x_2^2+x_3^2)\sinl^2x_4-1$.

\subsection{Relation to the separable ansatz}\label{ssec:separable}
On each open cube $\mathring P_n$, $n\in D_4$, the hypersurface $\M$ is given by the separable equation
\begin{equation}\label{separable}
\psi(x_1)+\psi(x_3)-\psi(x_2)-\psi(x_4)=0,\qquad \psi(t)=\log|\sigma(t)|,
\end{equation}
and, by \eqref{ODE1}, the function $\psi$ satisfies the autonomous equation
$$
\psi'^2=\varpi^2\bigl(e^{-2\psi}-e^{2\psi}\bigr).
$$
For comparison, the cone $x_1x_3=x_2x_4$ is given by \eqref{separable} with $\psi=\log|t|$, for which $\psi'^2=e^{-2\psi}$, while Scherk's surface $e^{x_3}\cos x_1=\cos x_2$ in $\R{3}$ corresponds to $\psi=\log|\cos t|$, for which $\psi'^2=e^{-2\psi}-1$. This explains in which sense $\M$ is a four-dimensional relative of Scherk's surface.

Thus $\M$ fits the separable ansatz $\sum_i f_i(x_i)=0$ for minimal hypersurfaces in $\R{N}$. Separable minimal surfaces in $\R{3}$ were classified by Weingarten \cite{Weing}; an analysis of the ansatz for $N\ge4$ can be found in Hoppe's lecture notes \cite[Section~II]{HoppeLect} (see also \cite{SakakiTanaka}). In particular, \cite[(II.26)--(II.27)]{HoppeLect} contains the equation $\wp(x)\wp(y)=\wp(z)\wp(v)$; since $\wp=1/\sinl^2$, it describes the union of $\M$ and $\M+e_1$ (up to a homothety and a permutation of coordinates), two congruent copies of $\M$ which intersect along the skeleton $\Mo$. We do not pursue the ansatz here. We only note that $\M$ is separable in a second way: seen from a hole, it is given in the frame coordinates by $\sum_k\phi(\xi_k)=0$, with the same function $\phi$ in all four variables (Remark~\ref{rem:secondproof}).

\subsection{Open questions}

\begin{question}
Does there exist a smooth embedded $D_4$-periodic minimal hypersurface in $\R{4}$ which is close to $\M$ (a desingularization of $\M$)? More generally, what can be said about the moduli space of periodic minimal hypersurfaces in $\R{4}$ near $\M$? Are the translational Jacobi fields $\scal{\nu}{e_i}$ the only $D_4$-periodic Jacobi fields of $\Mreg$ with suitable decay at the singular points (see Remark~\ref{rem:indexconsequences}(iii))?
\end{question}

\begin{question}
Is $\M$ characterized, among the stationary integral varifolds in $\R{4}$ with isolated singularities, by its symmetry group, or by its singular set $\Sing$ together with the types of the tangent cones?
\end{question}

\begin{question}
Theorem~\ref{thm:theta} represents $\M$ as the zero set of the theta series of the coset $\frac12(1+\mathrm{j})+\mathcal{O}$ of the Hurwitz order at the CM point $\tau=1+\mathrm{i}$. Is there a conceptual reason why the zero set of this theta series is a minimal hypersurface, for instance via the heat equation $\Delta_y\vartheta_\Lambda=4\pi\mathrm{i}\,\partial_\tau\vartheta_\Lambda$? For which lattices $L$, vectors $w$ and points $\tau$ is the zero set of $\vartheta_{w+L}(\cdot;\tau)$ minimal?
\end{question}

\begin{question}\label{q:quaternionic}
Proposition~\ref{pro:quaternionic} shows that, seen from a hole, all the structure of $\M$ (the lattice of holes, the three types of singular points, their tangent cones and the symmetry group) is expressed in terms of the Hurwitz order, the imaginary units and the binary octahedral group. Is there a construction of $\M$ which starts from the quaternions, for instance from the arithmetic of $\mathcal{O}$ or from the theta series of Remark~\ref{rem:quaternionic}, and explains its minimality?
\end{question}

\begin{question}
The construction of $\M$ combines the geometry of the root lattice $D_4$ (the $24$-cell honeycomb) with the Clifford cone, which is the zero set of the norm form of the split quaternions. Theta series such as those of Theorem~\ref{thm:theta} make sense in any dimension. Are there analogous periodic minimal hypersurfaces in $\R{8}$ and $\R{24}$, related to the lattice $E_8$ and the Leech lattice, respectively, whose singularities are modelled on other minimal cones, for instance on the Simons cone $x_1^2+\dots+x_4^2=x_5^2+\dots+x_8^2$ (the null cone of the norm form of the split octonions) or on the cubic minimal cones \cite{TkCliff,NTVbook,Tkachev-jordan}?
\end{question}

\subsection*{Acknowledgements}
I would like to thank Joaquín Pérez and Antonio Ros for giving me the opportunity to present these results in 2012 at the Differential Geometry Seminar at the University of Granada and in 2013 at the conference \emph{Variational Problems and Geometric PDEs}, and for several useful discussions concerning the topological aspects of the construction.
I am also grateful to Nikolai Nadirashvili for several useful discussions on singularities of nonlinear PDEs and related techniques from geometric measure theory during my visit to Marseille in 2015.
Finally, I thank Pavel Ryabov for helpful discussions concerning the homology computation in Section~\ref{sec:top}.

\bibliographystyle{plain}


\begin{thebibliography}{10}

\bibitem{AkhiezerEll}
N.~I. Akhiezer.
\newblock {\em Elements of the theory of elliptic functions}, volume~79 of {\em
  Translations of Mathematical Monographs}.
\newblock American Mathematical Society, Providence, RI, 1990.
\newblock Translated from the second Russian edition by H. H. McFaden.

\bibitem{Allard}
W.~K. Allard.
\newblock On the first variation of a varifold.
\newblock {\em Ann. of Math. (2)}, 95:417--491, 1972.

\bibitem{ACS}
L.~Ambrozio, A.~Carlotto, and B.~Sharp.
\newblock A note on the index of closed minimal hypersurfaces of flat tori.
\newblock {\em Proc. Amer. Math. Soc.}, 146(1):335--344, 2018.

\bibitem{BGG}
E.~Bombieri, E.~De~Giorgi, and E.~Giusti.
\newblock Minimal cones and the {B}ernstein problem.
\newblock {\em Invent. Math.}, 7:243--268, 1969.

\bibitem{CafHarSi}
L.~Caffarelli, R.~Hardt, and L.~Simon.
\newblock Minimal surfaces with isolated singularities.
\newblock {\em Manuscripta Math.}, 48(1-3):1--18, 1984.

\bibitem{ChoeHoppe}
J.~Choe and J.~Hoppe.
\newblock Higher dimensional minimal submanifolds generalizing the catenoid and
  helicoid.
\newblock {\em Tohoku Math. J. (2)}, 65(1):43--55, 2013.

\bibitem{ChoeHoppe18}
J.~Choe and J.~Hoppe.
\newblock Higher dimensional {S}chwarz's surfaces and {S}cherk's surfaces.
\newblock {\em Calc. Var. Partial Differential Equations}, 57(4):Paper No. 107,
  17, 2018.

\bibitem{Conway}
J.~H. Conway and N.~J.~A. Sloane.
\newblock {\em Sphere packings, lattices and groups}, volume 290 of {\em
  Grundlehren der Mathematischen Wissenschaften [Fundamental Principles of
  Mathematical Sciences]}.
\newblock Springer-Verlag, New York, third edition, 1999.
\newblock With additional contributions by E. Bannai, R. E. Borcherds, J.
  Leech, S. P. Norton, A. M. Odlyzko, R. A. Parker, L. Queen and B. B. Venkov.

\bibitem{ConwaySmith}
J.~H. Conway and D.~A. Smith.
\newblock {\em On quaternions and octonions: their geometry, arithmetic, and
  symmetry}.
\newblock A K Peters, Ltd., Natick, MA, 2003.

\bibitem{Coxeter}
H.~S.~M. Coxeter.
\newblock {\em Regular polytopes}.
\newblock Dover Publications, New York, third edition, 1973.

\bibitem{Dimler}
B.~Dimler.
\newblock Minimal submanifolds with multiple isolated singularities, 2024.
\newblock arXiv:2409.20327.

\bibitem{FischerKoch}
W.~Fischer and E.~Koch.
\newblock On 3-periodic minimal surfaces.
\newblock {\em Z. Kristallogr.}, 179:31--52, 1987.

\bibitem{HardtSimon}
R.~Hardt and L.~Simon.
\newblock Area minimizing hypersurfaces with isolated singularities.
\newblock {\em J. Reine Angew. Math.}, 362:102--129, 1985.

\bibitem{Hatcher}
A.~Hatcher.
\newblock {\em Algebraic topology}.
\newblock Cambridge University Press, Cambridge, 2002.

\bibitem{HoppeLect}
J.~Hoppe.
\newblock Lectures on minimal surfaces, 2019.
\newblock arXiv:1903.12062.

\bibitem{HoppeTkachev}
J.~Hoppe and V.~G. Tkachev.
\newblock New construction techniques for minimal surfaces.
\newblock {\em Complex Var. Elliptic Equ.}, 64(9):1546--1563, 2019.

\bibitem{Hsiang67}
W.-Y. Hsiang.
\newblock Remarks on closed minimal submanifolds in the standard {R}iemannian
  {$m$}-sphere.
\newblock {\em J. Differential Geometry}, 1:257--267, 1967.

\bibitem{KaabachiP}
S.~Kaabachi and F.~Pacard.
\newblock Riemann minimal surfaces in higher dimensions.
\newblock {\em J. Inst. Math. Jussieu}, 6(4):613--637, 2007.

\bibitem{LangerSinger}
J.~C. Langer and D.~A. Singer.
\newblock Reflections on the lemniscate of {B}ernoulli: the forty-eight faces
  of a mathematical gem.
\newblock {\em Milan J. Math.}, 78(2):643--682, 2010.

\bibitem{Lawden}
D.~F. Lawden.
\newblock {\em Elliptic functions and applications}, volume~80 of {\em Applied
  Mathematical Sciences}.
\newblock Springer-Verlag, New York, 1989.

\bibitem{Lawson}
H.~B. Lawson, Jr.
\newblock Complete minimal surfaces in {$S^{3}$}.
\newblock {\em Ann. of Math. (2)}, 92:335--374, 1970.

\bibitem{LeeLee}
E.~Lee and H.~Lee.
\newblock Generalizations of the {C}hoe--{H}oppe helicoid and {C}lifford cones
  in {E}uclidean space.
\newblock {\em J. Geom. Anal.}, 27(1):817--841, 2017.

\bibitem{Markush}
A.~I. Markushevich.
\newblock {\em The remarkable sine functions}.
\newblock American Elsevier Publishing Co., Inc., New York, 1966.
\newblock Translated from the Russian.

\bibitem{Mazet}
L.~Mazet.
\newblock Minimal hypersurfaces asymptotic to {S}imons cones.
\newblock {\em J. Inst. Math. Jussieu}, 16(1):39--58, 2017.

\bibitem{Meeks90}
W.~H. Meeks, III.
\newblock The theory of triply periodic minimal surfaces.
\newblock {\em Indiana Univ. Math. J.}, 39(3):877--936, 1990.

\bibitem{MilnorMorse}
J.~Milnor.
\newblock {\em Morse theory}.
\newblock Based on lecture notes by M. Spivak and R. Wells. Annals of
  Mathematics Studies, No. 51. Princeton University Press, Princeton, N.J.,
  1963.

\bibitem{NTV12}
N.~Nadirashvili, V.~G. Tkachev, and S.~Vl\u{a}du\c{t}.
\newblock A non-classical solution to a {H}essian equation from {C}artan
  isoparametric cubic.
\newblock {\em Adv. Math.}, 231(3-4):1589--1597, 2012.

\bibitem{NTVbook}
N.~Nadirashvili, V.~G. Tkachev, and S.~Vl\u{a}du\c{t}.
\newblock {\em Nonlinear elliptic equations and nonassociative algebras},
  volume 200 of {\em Mathematical Surveys and Monographs}.
\newblock American Mathematical Society, Providence, RI, 2014.

\bibitem{Pacard02}
F.~Pacard.
\newblock Higher-dimensional {S}cherk's hypersurfaces.
\newblock {\em J. Math. Pures Appl. (9)}, 81(3):241--258, 2002.

\bibitem{Perdomo3}
O.~M. Perdomo.
\newblock Minimal hypersurfaces in {$\Bbb R^n$} as regular values of a
  function.
\newblock {\em Rev. Integr. Temas Mat.}, 22(1-2):1--6, 2004.

\bibitem{SakakiTanaka}
M.~Sakaki and R.~Tanaka.
\newblock Some minimal hypersurfaces in the $(n+1)$-space with $2m$-norm, 2025.
\newblock arXiv:2508.12528.

\bibitem{Schappacher}
N.~Schappacher.
\newblock Some milestones of lemniscatomy.
\newblock In {\em Algebraic geometry ({A}nkara, 1995)}, volume 193 of {\em
  Lecture Notes in Pure and Appl. Math.}, pages 257--290. Dekker, New York,
  1997.

\bibitem{SergTk1999}
V.~V. Sergienko and V.~G. Tkachev.
\newblock Doubly periodic maximal surfaces with singularities.
\newblock {\em Siberian Adv. Math.}, 12(1):77--91, 2002.

\bibitem{Sharp}
B.~Sharp.
\newblock Compactness of minimal hypersurfaces with bounded index.
\newblock {\em J. Differential Geom.}, 106(2):317--339, 2017.

\bibitem{SimonGMT}
L.~Simon.
\newblock {\em Lectures on geometric measure theory}, volume~3 of {\em
  Proceedings of the Centre for Mathematical Analysis, Australian National
  University}.
\newblock Australian National University, Canberra, 1983.

\bibitem{SimonSing}
L.~Simon.
\newblock Stable minimal hypersurfaces in {$\mathbb{R}^{N+1+\ell}$} with
  singular set an arbitrary closed {$K\subset\{0\}\times\mathbb{R}^{\ell}$}.
\newblock {\em Ann. of Math. (2)}, 197(3):1205--1234, 2023.

\bibitem{Simons}
J.~Simons.
\newblock Minimal varieties in {R}iemannian manifolds.
\newblock {\em Ann. of Math. (2)}, 88:62--105, 1968.

\bibitem{SmaleN}
N.~Smale.
\newblock Minimal hypersurfaces with many isolated singularities.
\newblock {\em Ann. of Math. (2)}, 130(3):603--642, 1989.

\bibitem{TkCliff}
V.~G. Tkachev.
\newblock Minimal cubic cones via {C}lifford algebras.
\newblock {\em Complex Anal. Oper. Theory}, 4(3):685--700, 2010.

\bibitem{TkGranada}
V.~G. Tkachev.
\newblock Explicit examples of minimal hypersurfaces: a four-fold periodic
  minimal hypersurface in {$\mathbb{R}^4$} and beyond, 2013.
\newblock Talk at the conference \emph{Variational Problems and Geometric
  PDE's}, Granada, June 2013.

\bibitem{Tkachev-jordan}
V.~G. Tkachev.
\newblock A {J}ordan algebra approach to the cubic eiconal equation.
\newblock {\em J. Algebra}, 419:34--51, 2014.

\bibitem{Weing}
J.~{Weingarten}.
\newblock {Ueber die durch eine Gleichung von der Form ${\mathfrak X+Y+Z}=0$
  darstellbaren Minimalfl\"achen}.
\newblock {\em {G\"ott. Nachr.}}, 1887:272--275, 1887.

\bibitem{WhittakerWatson}
E.~T. Whittaker and G.~N. Watson.
\newblock {\em A course of modern analysis}.
\newblock Cambridge University Press, fourth edition, 1927.

\end{thebibliography}

\end{document}